\newtheorem{theorem}{Theorem}
\newtheorem{lemma}{Lemma}
\newcommand{\mr}{\mathrm}
\renewcommand{\mathbf}{\boldsymbol}
\newcommand{\mb}{\mathbf}
\newcommand{\bb}{\mathbb}
\newcommand{\mc}{\mathcal} 
\newcommand{\reals}{\bb R}
\newcommand{\hess}[2]{\mr{Hess}\left[#1 \right]\left(#2\right)}
\newcommand{\Hess}[2]{\mr{Hess}[#1](#2)}
\newcommand{\grad}[2]{\mr{grad}\left[#1 \right]\left(#2 \right)}
\newcommand{\Grad}[2]{\mr{grad}[#1](#2)}
\newcommand{\Exp}{\mr{Exp}}
\newcommand{\fs}{f_{\mr{Sep}}}
\newcommand{\set}[1]{\left\{ #1 \right\}}
\newcommand{\sech}{\mathrm{sech}}
\DeclarePairedDelimiter\abs{\lvert}{\rvert}%
\DeclarePairedDelimiter\norm{\lVert}{\rVert}%
\DeclarePairedDelimiter\ip{\langle}{\rangle}%
\newcommand{\V}{\boldsymbol}
\newcommand{\lbl}{\label}
\newcommand{\cdlmu}{c_1}
\newcommand{\cetadlpop}{c_2}
\newcommand{\cdistfrommin}{c_3}
\newcommand{\cmudlpop}{c_4}
\newcommand{\cetadl}{c_5}
\newcommand{\cmudl}{c_6}
\newcommand{\cz}{c_7}
\newcommand{\cb}{c_8}
\newcommand{\Cdlpop}{C_1}
\newcommand{\Cdl}{C_2}
\newcommand{\cdl}{c_{\theta}}
\DeclareSymbolFont{tipa}{T3}{cmr}{m}{n}
\DeclareMathAccent{\invbreve}{\mathalpha}{tipa}{16}
\newcommand{\Anc}{\invbreve{A}}
\title{Efficient Dictionary Learning with Gradient Descent}
	\author{Dar Gilboa \thanks{Department of Neuroscience, Columbia University} \thanks{Data Science Institute, Columbia University}, Sam Buchanan \thanks{Department of Electrical Engineering, Columbia University} \footnotemark[2], John Wright \footnotemark[3] \footnotemark[2]
}
\begin{document}
	\setlength{\abovedisplayskip}{4pt}
	\setlength{\belowdisplayskip}{4pt}
\maketitle
\begin{abstract}
Randomly initialized first-order optimization algorithms are the method of choice for solving many high-dimensional nonconvex problems in machine learning, yet general theoretical guarantees cannot rule out convergence to critical points of poor objective value. For some highly structured nonconvex problems however, the success of gradient descent can be understood by studying the geometry of the objective. We study one such problem -- complete orthogonal dictionary learning, and provide converge guarantees for randomly initialized gradient descent to the neighborhood of a global optimum. The resulting rates scale as low order polynomials in the dimension even though the objective possesses an exponential number of saddle points. This efficient convergence can be viewed as a consequence of negative curvature normal to the stable manifolds associated with saddle points, and we provide evidence that this feature is shared by other nonconvex problems of importance as well. 
\end{abstract}

\section{Introduction}

Many central problems in machine learning and signal processing are most naturally formulated as optimization problems. These problems are often both nonconvex and high-dimensional. High dimensionality makes the evaluation of second-order information prohibitively expensive, and thus randomly initialized first-order methods are usually employed instead. This has prompted great interest in recent years in understanding the behavior of gradient descent on nonconvex objectives \cite{hardt2015train,ge2015escaping,hardt2016gradient,dauphin2014identifying}. General analysis of first- and second-order methods on such problems can provide guarantees for convergence to critical points but these may be highly suboptimal, since nonconvex optimization is in general an NP-hard probem
\cite{bertsekas1999nonlinear}. Outside of a convex setting \cite{nesterov2013introductory} one must assume additional structure in order to make statements about convergence to optimal or high quality solutions. It is a curious fact that for certain classes of problems such as ones that involve sparsification \cite{lee2013near,bronstein2005blind} or matrix/tensor recovery \cite{keshavan2010matrix,jain2013low,anandkumar2014guaranteed} first-order methods can be used effectively. Even for some highly nonconvex problems where there is no ground truth available such as the training of neural networks first-order methods converge to high-quality solutions \cite{zhang2016understanding}. 

Dictionary learning is a problem of inferring a sparse representation of data that was originally developed in the neuroscience literature \cite{olshausen1996emergence}, and has since seen a number of important applications including image denoising, compressive signal acquisition and signal classification \cite{elad2006image,mairal2014sparse}. In this work we study a formulation of the dictionary learning problem that can be solved efficiently using randomly initialized gradient descent despite possessing a number of saddle points exponential in the dimension. A feature that appears to enable efficient optimization is the existence of sufficient negative curvature in the directions normal to the stable manifolds of all critical points that are not global minima \footnote{As well as a lack of spurious local minimizers, and the existence of large gradients or strong convexity in the remaining parts of the space}. 
This property ensures that the regions of the space that feed into small gradient regions under gradient flow do not dominate the parameter space. Figure \ref{sadsfig} illustrates the value of this property: negative curvature prevents measure from concentrating about the stable manifold. As a consequence randomly initialized gradient methods avoid the ``slow region'' of around the saddle point. 

\begin{figure} 
	
	\floatbox[{\capbeside\thisfloatsetup{capbesideposition={right,top},capbesidewidth=5.3cm}}]{figure}[\FBwidth]
	{\caption{{\bf Negative curvature helps gradient descent.} Red: ``slow region'' of small gradient around a saddle point. Green: stable manifold associated with the saddle point. Black: points that flow to the slow region. Left: global negative curvature normal to the stable manifold. Right: positive curvature normal to the stable manifold -- randomly initialized gradient descent is more likely to encounter the slow region.}\label{sadsfig}}
	{ \includegraphics[height=2in,width=3.25in]{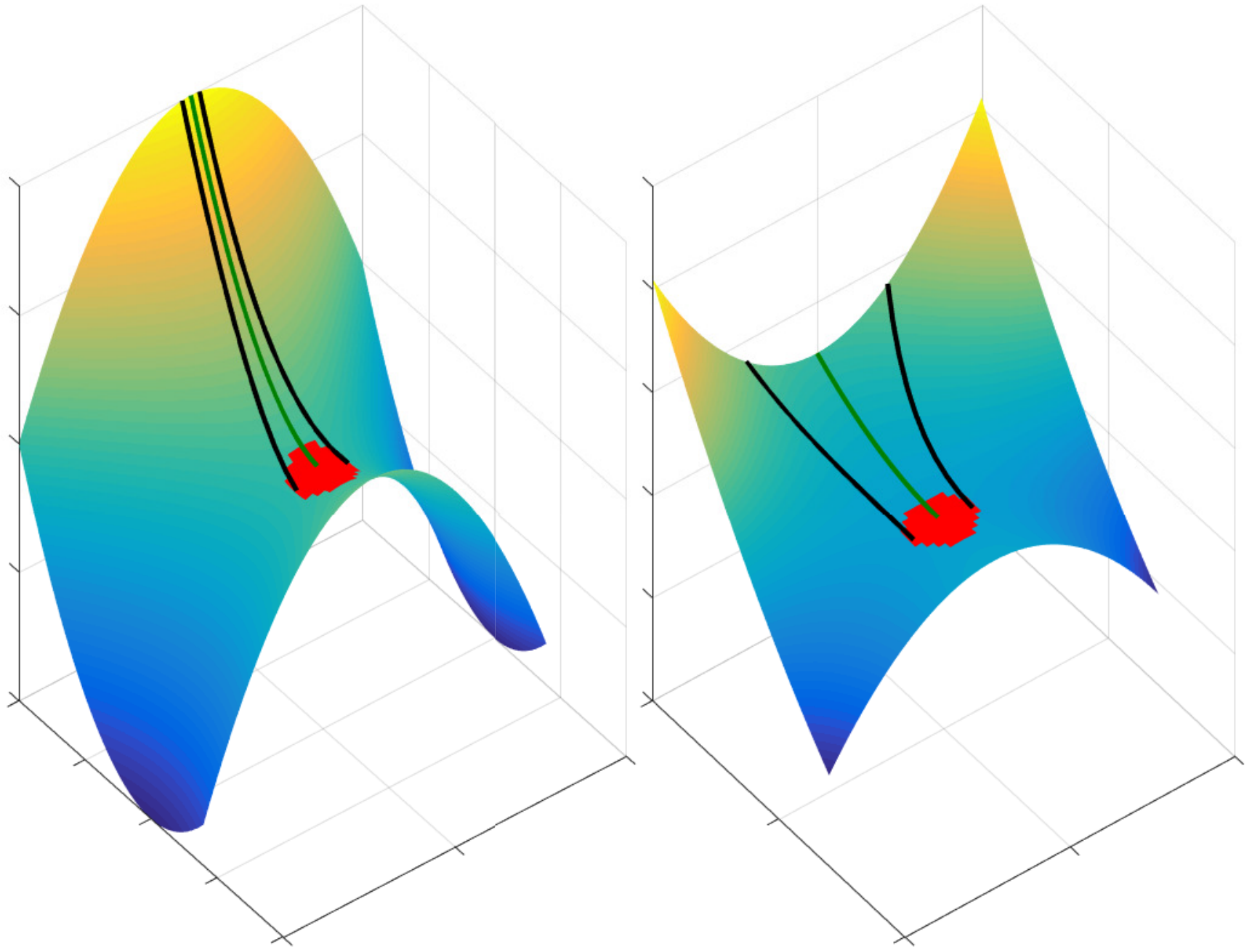}} 
\end{figure}

The main results of this work is a  convergence rate for randomly initialized gradient descent for complete orthogonal dictionary learning to the neighborhood of a global minimum of the objective. Our results are probabilistic since they rely on initialization in certain regions of the parameter space, yet they allow one to flexibly trade off between the maximal number of iterations in the bound and the probability of the bound holding. 

While our focus is on dictionary learning, it has been recently shown that for other important nonconvex problems such as phase retrieval \cite{chen2018gradient} performance guarantees for randomly initialized gradient descent can be obtained as well. In fact, in Appendix \ref{appgpr} we show that negative curvature normal to the stable manifolds of saddle points (illustrated in Figure \ref{sadsfig}) is also a feature of the population objective of generalized phase retrieval, and can be used to obtain an efficient convergence rate. 




\section{Related Work} \label{relsec}

\paragraph{Easy nonconvex problems.} There are two basic impediments to solving nonconvex problems globally: {\bf (i) spurious local minimizers}, and {\bf (ii) flat saddle points}, which can cause methods to stagnate in the vicinity of critical points that are not minimizers. The latter difficulty has motivated the study of {\em strict saddle functions} \cite{sun2015nonconvex,ge2015escaping}, which have the property that at every point in the domain of optimization, there is a large gradient, a direction of strict negative curvature, or the function is strongly convex. By leveraging this curvature information, it is possible to escape saddle points and obtain a local minimizer in polynomial time.\footnote{This statement is nontrivial: finding a local minimum of a smooth function is NP-hard.} Perhaps more surprisingly, many known strict saddle functions also have the property that every local minimizer is global; for these problems, this implies that efficient methods find global solutions. Examples of problems with this property include variants of sparse dictionary learning \cite{sun2017complete}, phase retrieval \cite{sun2016geometric}, tensor decomposition \cite{ge2015escaping}, community detection \cite{bandeira2016low} and phase synchronization \cite{boumal2016nonconvex}. 

\paragraph{Minimizing strict saddle functions.} Strict saddle functions have the property that at every saddle point there is a direction of strict negative curvature. A natural approach to escape such saddle points is to use second order methods (e.g., trust region \cite{conn2000trust} or curvilinear search \cite{goldfarb1980curvilinear}) that explicitly leverage curvature information. Alternatively, one can attempt to escape saddle points using first order information only. However, some care is needed: canonical first order methods such as gradient descent will not obtain minimizers if initialized at a saddle point (or at a point that flows to one) -- at any critical point, gradient descent simply stops. A natural remedy is to randomly perturb the iterate whenever needed. A line of recent works shows that noisy gradient methods of this form efficiently optimize strict saddle functions \cite{lee2016gradient,du2017gradient,jin2017escape}. For example, \cite{jin2017escape} obtains rates on strict saddle functions that match the optimal rates for smooth convex programs up to a polylogarithmic dependence on dimension.\footnote{This work also proves convergence to a second-order stationary point under more general smoothness assumptions.}


\paragraph{Randomly initialized gradient descent?} The aforementioned results are broad, and nearly optimal. Nevertheless, important questions about the behavior of first order methods for nonconvex optimization remain unanswered. For example: {\em in every one of the aforemented benign nonconvex optimization problems, randomly initialized gradient descent rapidly obtains a minimizer.} This may seem unsurprising: general considerations indicate that the stable manifolds associated with non-minimizing critical points have measure zero \cite{nicolaescu2011invitation}, this implies that a variety of small-stepping first order methods converge to minimizers in the large-time limit \cite{lee2017first}. However, it is not difficult to construct strict saddle problems that {\em are not} amenable to efficient optimization by randomly initialized gradient descent -- see \cite{du2017gradient} for an example. This contrast between the excellent empirical performance of randomly initialized first order methods and worst case examples suggests that there are important geometric and/or topological properties of ``easy nonconvex problems'' that are not captured by the strict saddle hypothesis. Hence, the motivation of this paper is twofold: (i) to provide theoretical corroboration (in certain specific situations) for what is arguably the simplest, most natural, and most widely used first order method, and (ii) to contribute to the ongoing effort to identify conditions which make nonconvex problems amenable to efficient optimization. 

\section{Dictionary Learning over the Sphere} \label{objsec}

Suppose we are given data matrix $\mb Y = \left[ \mb y_1, \dots \mb y_p \right] \in \reals^{n \times p}$. The {\em dictionary learning} problem asks us to find a concise representation of the data \cite{elad2006image}, of the form $\mb Y \approx \mb A \mb X$, where $\mb X$ is a sparse matrix. In the {\em complete, orthogonal dictionary learning} problem, we restrict the matrix $\mb A$ to have orthonormal columns ($\mb A \in O(n)$). This variation of dictionary learning is useful for finding concise representations of small datasets (e.g., patches from a single image, in MRI \cite{ravishankar2011mr}). 

To analyze the behavior of dictionary learning algorithms theoretically, it useful to posit that $\mb Y = \mb A_0\mb X_0$ for some true dictionary $\mb A_0 \in O(n)$ and sparse coefficient matrix $\mb X_0 \in \reals^{n \times p}$, and ask whether a given algorithm recovers the pair $(\mb A_0, \mb X_0)$.\footnote{This problem exhibits a sign permutation symmetry: $\mb A_0 \mb X_0 = (\mb A_0 \mb \Gamma ) (\mb \Gamma^* \mb X_0)$ for any signed permutation matrix $\mb \Gamma$. Hence, we only ask for recovery up to a signed permutation.} In this work, we further assume that the sparse matrix $\mb X_0$ is random, with entries i.i.d.\ Bernoulli-Gaussian\footnote{$[\mb X_0]_{ij} = \mb V_{ij} \mb \Omega_{ij}$, with $\mb V_{ij} \sim\mathcal{N}(0,1)$, $\mb \Omega_{ij} \sim \mathrm{Bern}(\theta)$ independent.}. For simplicity, we will let $\mb A_0 = \mb I$; our arguments extend directly to general $\mb A_0$ via the simple change of variables $\mb q \mapsto \mb A_0^* \mb q$. 


\cite{spielman2012exact} showed that under mild conditions, the complete dictionary recovery problem can be reduced to the geometric problem of finding a sparse vector in a linear subspace \cite{qu2014finding}. Notice that because $\mb A_0$ is orthogonal, $\mathrm{row}(\mb Y) = \mathrm{row}(\mb X_0)$. Because $\mb X_0$ is a sparse random matrix, the rows of $\mb X_0$ are sparse vectors. Under mild conditions \cite{spielman2012exact}, they are the {\em sparsest} vectors in the row space of $\mb Y$, and hence can be recovered by solving the conceptual optimization problem 
\[
\min \; \left\Vert \mb q^{\ast}\mb Y\right\Vert_{0} \quad \mathrm{s.t.} \quad \mb q^{\ast} \mb Y\neq \mb 0. 
\]
This is not a well-structured optimization problem: the objective is discontinuous, and the constraint set is open. A natural remedy is to replace the $\ell^0$ norm with a smooth sparsity surrogate, and to break the scale ambiguity by constraining $\mb q$ to the sphere, giving
\begin{equation} \label{dl2eq}
\min \;  f_{\mr{DL}}(\mb q) \equiv \frac{1}{p} \underset{k=1}{\overset{p}{\sum}} h_\mu( \mb q^{\ast}\mb y_{k} ) \quad \mathrm{s.t.} \quad  \mb q \in \mathbb{S}^{n-1}.
\end{equation}
Here, we choose $h_\mu( t) = \mu \log(\cosh(t/\mu))$ as a smooth sparsity surrogate.
 This objective was analyzed in \cite{sun2015complete}, which showed that (i) although this optimization problem is nonconvex, when the data are sufficiently large, with high probability every local optimizer is near a signed column of the true dictionary $\mb A_0$, (ii) every other critical point has a direction of strict negative curvature, and (iii) as a consequence, a second-order Riemannian trust region method efficiently recovers a column of $\mb A_0$.\footnote{Combining with a deflation strategy, one can then efficiently recover the entire dictionary $\mb A_0$.} The Riemannian trust region method is of mostly theoretical interest: it solves complicated (albeit polynomial time) subproblems that involve the Hessian of $f_{\mr{DL}}$. 

In practice, simple iterative methods, including randomly initialized gradient descent are also observed to rapidly obtain high-quality solutions. In the sequel, we will give a geometric explanation for this phenomenon, and bound the rate of convergence of randomly initialized gradient descent to the neighborhood of a column of $\mb A_0$. Our analysis of $f_{\mr{DL}}$ is probabilistic in nature: it argues that with high probability in the sparse matrix $\mb X_0$, randomly initialized gradient descent rapidly produces a minimizer.

To isolate more clearly the key intuitions behind this analysis, we first analyze the simpler {\em separable objective}
\begin{equation} \label{sepeq}
\min \; \fs(\mb q) \equiv \overset{n}{\underset{i=1}{\sum}} h_{\mu}( \mb q_i ) \quad \mr{s.t.} \quad \mb q \in \bb S^{n-1}. 
\end{equation}
Figure \ref{sepobjfig} plots both $\fs$ and $f_{\mr{DL}}$ as functions over the sphere. Notice that many of the key geometric features in $f_{\mr{DL}}$ are present in $\fs$; indeed, $\fs$ can be seen as an ``ultrasparse'' version of $f_{\mr{DL}}$ in which the columns of the true sparse matrix $\mb X_0$ are taken to have only one nonzero entry. A virtue of this model function is that its critical points and their stable manifolds have simple closed form expressions (see Lemma \ref{seplemma}). 

%

\begin{figure} 
	\floatbox[{\capbeside\thisfloatsetup{capbesideposition={right,top},capbesidewidth=3.5cm}}]{figure}[\FBwidth]
	{\caption{\textit{Left:} The separable objective for $n=3$. Note the similarity to the dictionary learning objective. \textit{Right:} The objective for complete orthogonal dictionary learning (discussed in section \ref{dlsec}) for $n=3$.}\label{sepobjfig}}
	{ \includegraphics[height=1.4in]{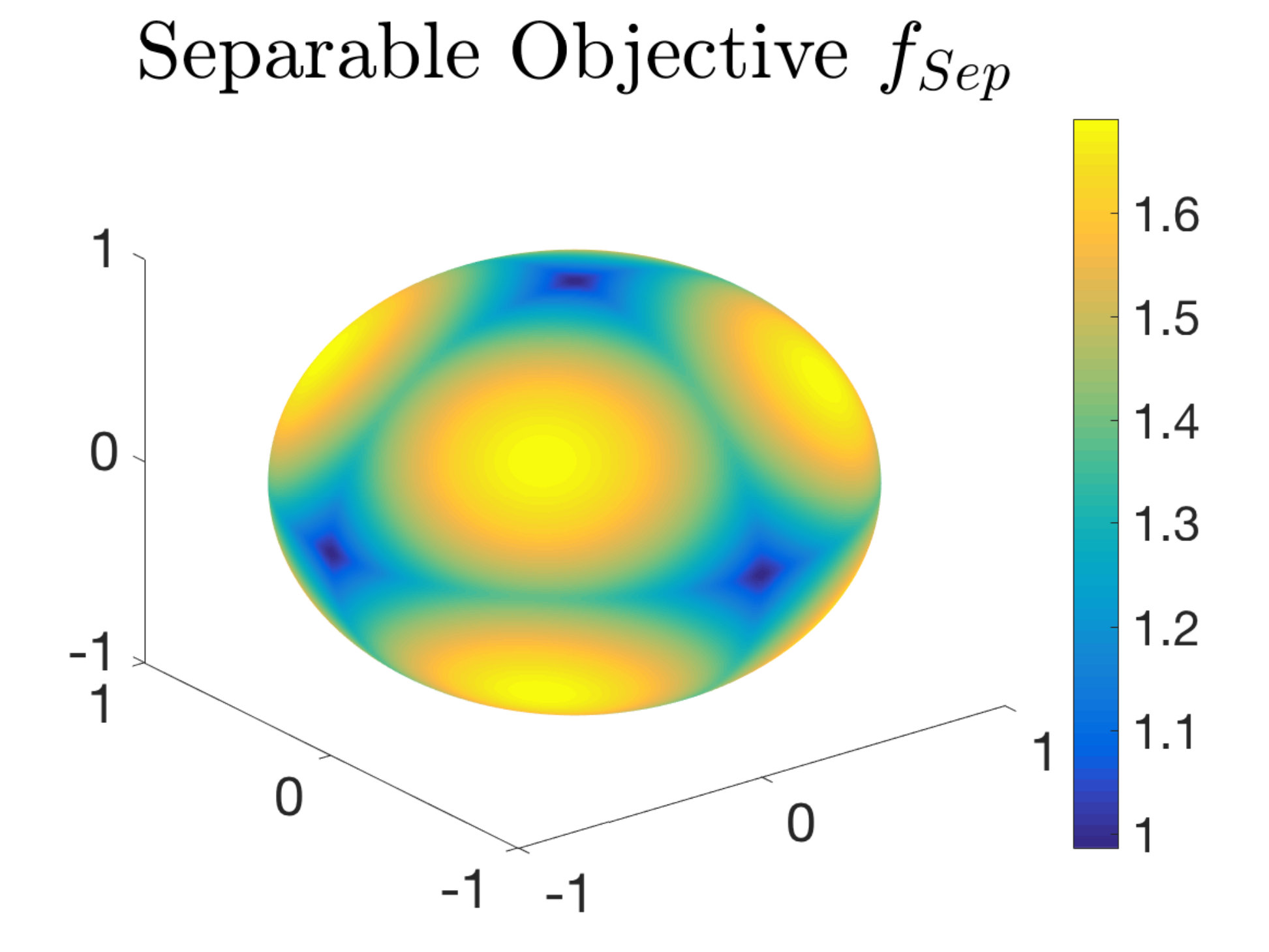}
		\includegraphics[height=1.4in]{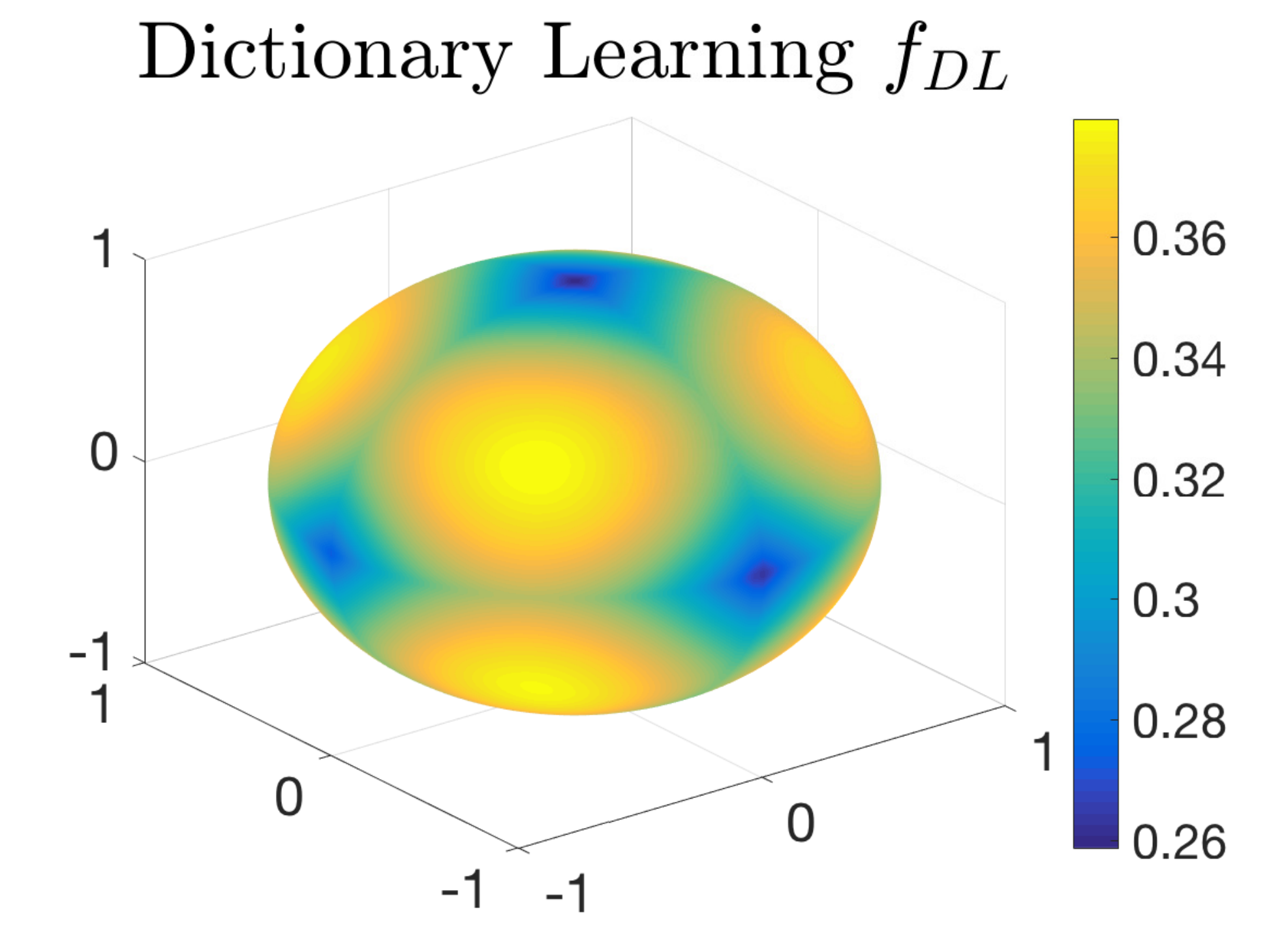}} 

\end{figure}


\section{Outline of Important Geometric Features} \label{bgsec}
Our problems of interest have the form
\[
{\min}\; f(\mb q) \quad \mathrm{s.t.} \quad \mb q \in \bb S^{n-1}, 
\]
where $f : \reals^n \to \reals$ is a smooth function. We let $\nabla f(\mb q)$ and $\nabla ^2 f(\mb q)$ denote the Euclidean gradient and hessian (over $\reals^n$), and let $\grad{f}{\mb q}$ and $\hess{f}{\mb q}$ denote their Riemannian counterparts (over $\mathbb{S}^{n-1}$). We will obtain results for Riemannian gradient descent defined by the update 
\[
\mathbf{q}\rightarrow\exp_{\mathbf{q}}(-\eta \,\mathrm{grad}[f](\mathbf{q}))
\]
for some step size $\eta > 0$, where $\exp_{\mathbf{q}}:T_{\mathbf{q}}\mathbb{S}^{n-1}\rightarrow\mathbb{S}^{n-1}$ is the exponential map. The Riemannian gradient on the sphere is given by $\mathrm{grad}[f](\mathbf{q})=(\mathbf{I}-\mathbf{qq}^{\ast})\nabla f(\mathbf{q})$.

We let $A$ denote the set of critical points of $f$ over $\mathbb{S}^{n-1}$ -- these are the points $\bar{\mb q}$ s.t.\  $\grad{f}{\bar{\mb q}} = \mb 0$. We let $\breve{A}$ denote the set of local minimizers, and $\Anc$ its complement. Both $f_{\mr{Sep}}$ and $f_{\mr{DL}}$ are {\em Morse functions} on $\bb S^{n-1}$,\footnote{Strictly speaking, $f_{\mr{DL}}$ is Morse with high probability, due to results of \cite{sun2017complete}.} we can assign an index $\alpha$ to every $\bar{\mb q} \in A$, which is the number of negative eigenvalues of $\hess{f}{\bar{\mb q}}$.

Our goal is to understand when gradient descent efficiently converges to a local minimizer. In the small-step limit, gradient descent follows gradient flow lines $\mb \gamma : \bb R \to \mc M$, which are solution curves of the ordinary differential equation 
\[
\dot{\mb \gamma}(t)=-\grad{f}{\mb \gamma(t)}
\]
To each critical point $\mb \alpha \in A$ of index $\lambda$, there is an associated {\em stable manifold} of dimension $\mr{dim}(\mc M) - \lambda$, which is roughly speaking, the set of points that flow to $\alpha$ under gradient flow:
\[
W^{s}(\mb \alpha) \equiv \left\{\mb q\in\mathcal{M}\; \middle| \; \begin{array}{l} \underset{t\rightarrow \infty}{\lim}\mb \gamma(t)=\mb \alpha \\ \text{\scriptsize $\mb \gamma$ a gradient flow line s.t.\ $\mb \gamma(0) =\mb q$} \end{array} \right\}.
\]



Our analysis uses the following convenient coordinate chart 
\begin{equation} \label{phieq}
\mb \varphi(\mb w)= \left(\mb w,\sqrt{1-\left\Vert \mb w\right\Vert ^{2}}\right) \equiv \mb q(\mb w)
\end{equation}
where $\mb w \in B_1(0)$. We also define two useful sets: 
\[
\mathcal{C} \equiv \{ \mb q\in\mathbb{S}^{n-1}|q_{n}\geq\left\Vert \mb w\right\Vert _{\infty}\}
\]
\begin{equation} \label{czeq}
\mathcal{C}_{\zeta} \equiv \left\{\mb q\in\mathbb{S}^{n-1} \; \middle | \; \frac{q_{n}}{\left\Vert \mb w\right\Vert _{\infty}}\geq 1+\zeta\right\}.
\end{equation}

Since the problems considered here are symmetric with respect to a signed permutation of the coordinates we can consider a certain $\mathcal{C}$ and the results will hold for the other symmetric sections as well. We will show that at every point in $\mathcal{C}$ aside from a neighborhood of a global minimizer for the separable objective (or a solution to the dictionary problem that may only be a local minimizer), there is either a large gradient component in the direction of the minimizer or negative curvature in a direction normal to $\partial \mathcal{C}$. For the case of the separable objective, one can show that the stable manifolds of the saddles lie on this boundary, and hence this curvature is normal to the stable manifolds of the saddles and allows rapid progress away from small gradient regions and towards a global minimizer \footnote{The direction of this negative curvature is important here, and it is this feature that distinguishes these problems from other problems in the strict-saddle class where this direction may be arbitrary}. These regions are depicted in Figure \ref{curvfig}.

\begin{figure}
	\includegraphics[width=2.5in]{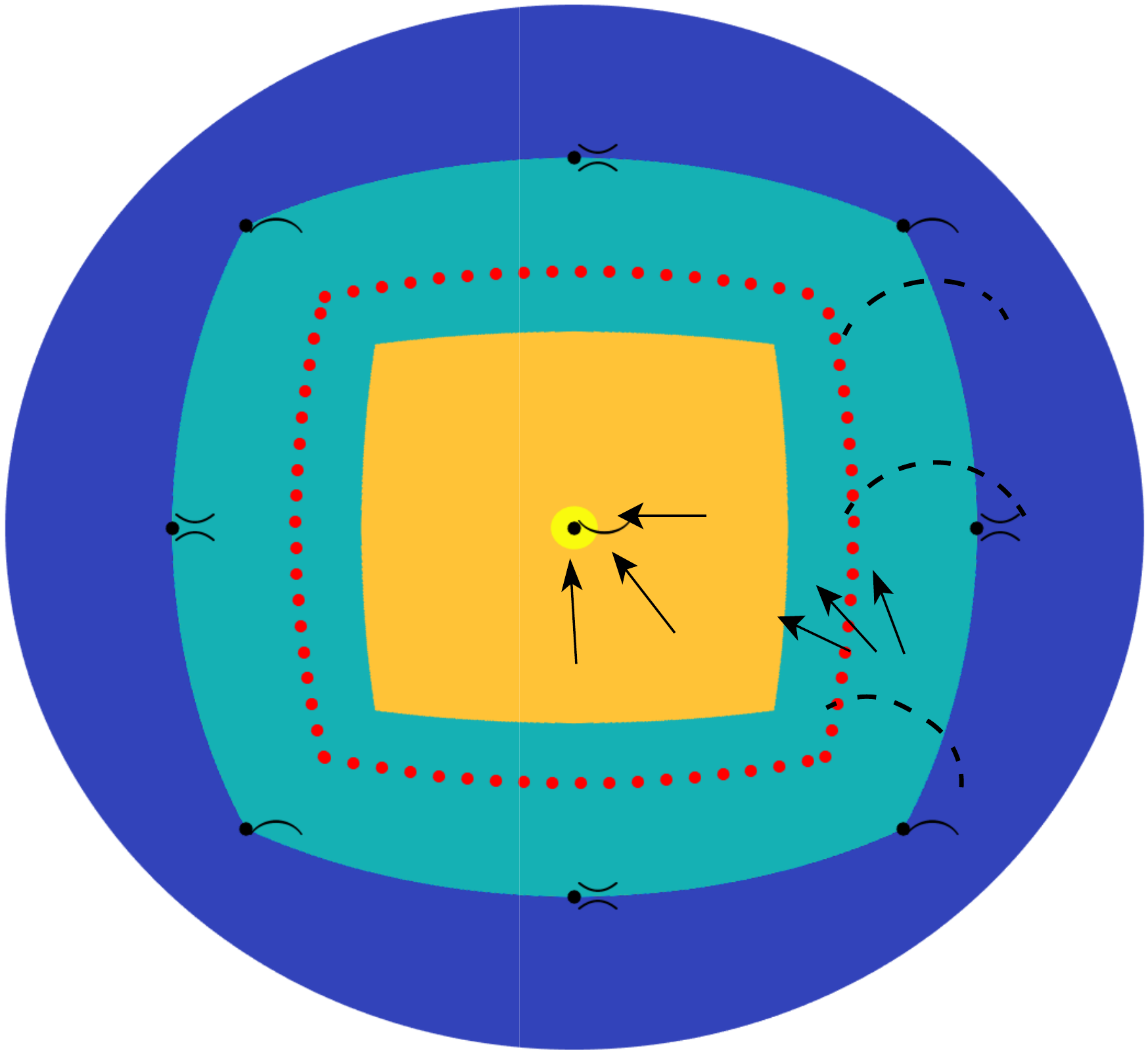}	
	\caption{{\bf Negative curvature and efficient gradient descent.} The union of the light blue, orange and yellow sets is the set $\mathcal{C}$. In the light blue region, there is negative curvature normal to $\partial \mathcal{C}$, while in the orange region the gradient norm is large, as illustrated by the arrows. There is a single global minimizer in the yellow region. For the separable objective, the stable manifolds of the saddles and maximizers all lie on $\partial \mathcal{C}$ (the black circles denote the critical points, which are either maximizers "$\smallfrown$", saddles "$\asymp$", or minimizers "$\smallsmile$"). The red dots denote $\partial \mathcal{C}_\zeta$ with $\zeta = 0.2$.} \label{curvfig}
\end{figure}

In the sequel, we will make the above ideas precise for the two specific nonconvex optimization problems discussed in Section \ref{objsec} and use this to obtain a convergence rate to a neighborhood of a global minimizer. Our analysis are specific to these problems. However, as we will describe in more detail later, they hinge on important geometric characteristics of these problems which make them amenable to efficient optimization, which may obtain in much broader classes of problems. 



%


\section{Separable Function Convergence Rate} \label{secsep}

In this section, we study the behavior of randomly initialized gradient descent on the separable function $f_{\mr{Sep}}$. We begin by characterizing the critical points:

\begin{lemma}[Critical points of $f_{\mr{Sep}}$] \label{seplemma}
	The critical points of the separable problem (\ref{sepeq}) are 
	\begin{equation} \label{Aeq}
	A=\left\{ \mathcal{P}_{\mathbb{S}^{n-1}}[\mb a]\left|\mb a\in\{-1,0,1\}^{\otimes n},\left\Vert \mb a\right\Vert >0\right.\right\}. 
	\end{equation}
	For every $\mb \alpha \in A$ and corresponding $\mb a(\mb \alpha)$, for $\mu < \frac{c}{\sqrt{n}\log n}$ the stable manifold of $\mb \alpha$ takes the form 
	\begin{equation} \label{weq}
	W^{s}(\mb \alpha)=\left\{ \mathcal{P}_{\mathbb{S}^{n-1}}\left[ \, \mb a(\mb \alpha)+\mb b \, \right] \;\middle | \;\begin{array}{c}
	\mathrm{supp}(\mb a(\mb \alpha)) 
	\cap\mathrm{supp}(\mb b) =\varnothing,\\
	\left\Vert \mb b\right\Vert _{\infty}<1
	\end{array}\right\} 
	\end{equation}
	where $c>0$ is a numerical constant.
\end{lemma}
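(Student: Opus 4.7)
The proof has two parts: computing the critical set, and computing the stable manifolds.

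\textbf{Part 1 (critical points).} The Riemannian criticality condition $\grad{\fs}{\mb q}=(\mb I-\mb q\mb q^{\ast})\nabla \fs(\mb q)=\mb 0$ is equivalent to $\nabla \fs(\mb q)=\lambda \mb q$ for some Lagrange multiplier $\lambda\in\reals$, i.e.\ $\tanh(q_i/\mu)=\lambda q_i$ coordinatewise. I would then analyze the scalar equation $\tanh(t/\mu)=\lambda t$: when $\lambda\notin(0,1/\mu)$ its only solution is $t=0$, which contradicts $\|\mb q\|=1$; when $0<\lambda<1/\mu$, concavity of $\tanh$ on $[0,\infty)$ and its odd symmetry yield exactly three solutions $\{0,\pm t^{\ast}(\lambda)\}$. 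Hence each coordinate of $\mb q$ lies in $\{-t^{\ast},0,t^{\ast}\}$, and rescaling onto the sphere gives (\ref{Aeq}).

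\textbf{Part 2 (stable manifolds).} I would work directly with the Riemannian gradient flow
\[
\dot q_i=-\tanh(q_i/\mu)+q_i\lambda(t),\qquad \lambda(t)=\sum_j q_j\tanh(q_j/\mu),
\]
and extract two structural invariances: (a) each hyperplane $\{q_i=0\}$ is invariant (since $\dot q_i|_{q_i=0}=0$), so signs of coordinates are preserved under the flow; (b) by the mean value theorem,
\[
\tfrac{d}{dt}(q_i-q_j)=(q_i-q_j)\bigl[-\tfrac{1}{\mu}\sech^2(\xi/\mu)+\lambda(t)\bigr],
\]
so the relation $q_i=q_j$ is preserved. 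Combining (a) and (b), the set on the right-hand side of (\ref{weq}) is invariant under the flow. For the forward inclusion, I would fix $S=\mr{supp}(\mb a(\mb \alpha))$, $k=|S|$, write $q_S(t)$ for the common value of $q_j(t)$, $j\in S$, and define ratios $r_i(t):=q_i(t)/q_S(t)$ for $i\notin S$. A short calculation gives
\[
\dot r_i=\tfrac{1}{q_S}\bigl[r_i\tanh(q_S/\mu)-\tanh(r_iq_S/\mu)\bigr],
\]
and concavity plus odd symmetry of $\tanh$ force $r_i\dot r_i<0$ whenever $0<|r_i|<1$. The unit-norm constraint combined with $|r_i|<1$ yields $q_S>1/\sqrt n$, so the flow stays uniformly away from $q_S=0$, and one concludes $|r_i|\to 0$ monotonically, whence $q_S\to 1/\sqrt k$ and $\mb q(t)\to\mb \alpha$.

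For the reverse inclusion I would rule out the three complementary regimes of $\mb q(0)$: (i) if $\mr{sign}(q_{0,i})\ne\mr{sign}(\alpha_i)$ for some $i\in S$, invariance (a) alone precludes convergence; (ii) if $q_{0,i}\ne q_{0,j}$ for some $i,j\in S$, then in a neighborhood of $\mb \alpha$ the coefficient in (b) is close to $-\tfrac{1}{\mu}\sech^2(1/(\sqrt k\mu))+\sqrt k\tanh(1/(\sqrt k\mu))$, which the hypothesis $\mu<c/(\sqrt n\log n)$ renders strictly positive for a suitable numerical constant, so $|q_i-q_j|$ grows exponentially once the trajectory enters this neighborhood, contradicting $\mb q(t)\to\mb \alpha$; (iii) if some $i\notin S$ has $|r_i(0)|\ge 1$, the same concavity argument now gives $r_i\dot r_i\ge 0$, so $|r_i|$ is nondecreasing and $q_i\not\to 0$. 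The main obstacle is case (ii): converting the linearized exponential growth of $q_i-q_j$ at $\mb \alpha$ into a global obstruction requires quantifying the coefficient in (b) uniformly over a neighborhood, and this is precisely where the bound $\mu<c/(\sqrt n\log n)$ must enter — it ensures that at every critical point (with support size potentially as large as $n$) the symmetry-breaking directions $e_i-e_j$ within $\mr{supp}(\mb a)$ are genuinely unstable for the flow, so that the stable manifold is cut down to the $(n-k)$-dimensional slice described by (\ref{weq}).
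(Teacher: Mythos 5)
Your proposal is correct and reaches the same conclusions, but it packages the stable-manifold argument differently from the paper. Part 1 is identical in substance (Lagrange multiplier, concavity of $\tanh$ forcing each coordinate into $\{0,\pm t^\ast(\lambda)\}$). For Part 2, the paper's proof establishes a single pointwise inequality comparing the negative-gradient components of the maximal-magnitude coordinate and a coordinate whose magnitude is smaller by an infinitesimal $\Delta$ (its inequality \eqref{sepgradeq}, proved by expanding to first order in $\Delta$ and bounding $h_1-h_2=\sum_k r_k\tanh(r_k/\mu)-\tfrac{1}{\mu}\sech^2(q_j/\mu)>0$); from this it deduces that strictly non-maximal coordinates can never become maximal, and then identifies the limit by appealing to the already-computed form of the critical points together with convergence of the flow. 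Your route instead derives exact evolution equations for the ratios $r_i=q_i/q_S$ and the differences $q_i-q_j$, and argues by monotonicity: concavity of $\tanh$ gives $r_i\dot r_i<0$ on $0<\lvert r_i\rvert<1$, which (with $\lvert q_S\rvert\geq 1/\sqrt n$ and a compactness argument to rule out a nonzero limit of $\lvert r_i\rvert$) proves the forward inclusion constructively, without invoking convergence to a critical point as a black box; your case (ii) makes explicit the instability of the symmetric directions $e_i-e_j$ inside $\mathrm{supp}(\mb a)$, which the paper leaves implicit in its symmetry argument. The quantitative content is the same in both — the bound $\mu<c/(\sqrt n\log n)$ enters exactly to make $\tfrac{1}{\mu}\sech^2(x/\mu)$ negligible against $\lambda\approx\tanh(1/(\sqrt n\mu))$ for $x\gtrsim 1/\sqrt n$ — but your version is more self-contained and makes the dichotomy (contraction of off-support ratios, expansion of on-support asymmetries) cleaner; the paper's version is shorter at the cost of leaning on the stable manifold theorem and flow convergence. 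Two small points to tighten: define $q_S$ as the common value of $a_jq_j$ (so mixed signs on $S$ are handled), and note that your case (iii) presupposes cases (i) and (ii) have been excluded so that $q_S$ and $r_i$ are well defined.
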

\begin{proof}
	Please see Appendix \ref{Rpf}
\end{proof}

By inspecting the dimension of the stable manifolds, it is easy to verify that that
there are $2n$ global minimizers at the 1-sparse vectors on the
sphere $\pm\widehat{\mb e}_{i}$, $2^n$ maximizers at the least
sparse vectors and an exponential number of saddle points of
intermediate sparsity. This is because the dimension of $
W^s(\alpha)$ is simply the dimension of $b$ in \ref{weq}, and it
follows directly from the stable manifold theorem that only
minimizers will have a stable manifold of dimension $n-1$. The
objective thus possesses no spurious local minimizers.


When referring to critical points and stable manifolds from now on we refer only to those that are contained in $\mathcal{C}$ or on its boundary. It is evident from Lemma \ref{seplemma} that the critical points in $\Anc$ all lie on $\partial \mathcal{C}$ and that $\underset{\mb \alpha \in \Anc}{\bigcup}W^{s}(\mb \alpha)=\partial \mathcal{C}$ , and there is a minimizer at its center given by $\mb q(\mb 0)=\widehat{\mb e}_n$.

\subsection{The effect of negative curvature on the gradient}

We now turn to making precise the notion that negative curvature normal to stable manifolds of saddle points enables gradient descent to rapidly exit small gradient regions. We do this by defining vector fields $\mb u^{(i)}(\mb q),i\in [n-1]$ such that each field is normal to a continuous piece of $\partial \mathcal{C}_\zeta$ and points outwards relative to $\mathcal{C}_\zeta$ defined in \ref{czeq}. By showing that the Riemannian gradient projected in this direction is positive and proportional to $\zeta$, we are then able to show that gradient descent acts to increase $\zeta(\mathbf{q}(\mathbf{w}))=\frac{q_{n}}{\left\Vert \mathbf{w}\right\Vert _{\infty}}-1$ geometrically. This corresponds to the behavior illustrated in the light blue region in Figure \ref{curvfig}.


\begin{lemma}[Separable objective gradient projection] \label{neggradlem}
	For any $\mb w\in \mathcal{C}_{\zeta},i \in [n-1]$, we define a vector $\mb u^{(i)} \in T_{\mb q(\mb w)}\mathbb{S}^{n-1}$ by
	\begin{equation} \lbl{ueq}
	u_{j}^{(i)}=\begin{cases} 0 & j \notin \{i,n\}, \\ \mr{sign}(w_{i}) & j=i, \\ -\frac{\left\vert w_{i}\right\vert }{q_{n}} & j=n. \end{cases}
	\end{equation}
	If $\mu \log\left(\frac{1}{\mu}\right) \leq w_{i}$ and $\mu<\frac{1}{16}$, then
	\[
	\mathbf{u}^{(i)\ast}\mathrm{grad}[\fs ](\mathbf{q}(\mathbf{w})) \, \geq \, c \left\Vert \mb w\right\Vert_{\infty}\zeta,
	\]
	where $c>0$ is a numerical constant.
\end{lemma}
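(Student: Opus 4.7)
The plan is to reduce the claimed vector inequality to a scalar inequality in $\tanh$, and then exploit the near-saturation of $\tanh$ implied by the hypothesis $w_i \geq \mu\log(1/\mu)$. First I would verify that $\mb u^{(i)}$ is tangent at $\mb q(\mb w)$: a direct computation gives $\mb u^{(i)\ast}\mb q(\mb w) = \mr{sign}(w_i)\,w_i - (|w_i|/q_n)\,q_n = 0$. Since the Riemannian gradient is the orthogonal projection of the Euclidean gradient onto the tangent space and $\mb u^{(i)}$ is tangent, $\mb u^{(i)\ast}\grad{\fs}{\mb q(\mb w)} = \mb u^{(i)\ast}\nabla \fs(\mb q(\mb w))$. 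Using $(\nabla \fs(\mb q))_j = \tanh(q_j/\mu)$, $q_i = w_i$ for $i \in [n-1]$, and $w_i > 0$ by hypothesis,
\[
\mb u^{(i)\ast}\grad{\fs}{\mb q(\mb w)} = \tanh(w_i/\mu) - \frac{w_i}{q_n}\tanh(q_n/\mu).
\]

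The core of the proof is a lower bound on this scalar. I would split it as
\[
\bigl[\tanh(w_i/\mu) - \tanh(q_n/\mu)\bigr] + \frac{q_n - w_i}{q_n}\tanh(q_n/\mu)
\]
and handle each piece. Since $\mb w \in \mathcal{C}_\zeta \subset \mathcal{C}$ we have $w_i \leq \|\mb w\|_\infty \leq q_n$, so the bracket is non-positive; by the mean value theorem and monotonicity of $\sech^2$ on $[0,\infty)$, it is at least $-(q_n - w_i)\sech^2(w_i/\mu)/\mu$. The elementary estimate $\sech^2(x) \leq 4 e^{-2x}$ combined with $w_i/\mu \geq \log(1/\mu)$ yields $\sech^2(w_i/\mu) \leq 4\mu^2$, so the bracket is bounded below by $-4\mu(q_n - w_i)$. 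For the positive term, the companion estimate $1-\tanh(x) \leq 2 e^{-2x}$ gives $\tanh(q_n/\mu) \geq \tanh(w_i/\mu) \geq 1 - 2\mu^2$, and since $q_n \leq 1$ this term is at least $(1 - 2\mu^2)(q_n - w_i)$.

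Summing the two bounds yields $\mb u^{(i)\ast}\grad{\fs}{\mb q(\mb w)} \geq (1 - 2\mu^2 - 4\mu)(q_n - w_i)$, whose prefactor is bounded below by a positive absolute constant whenever $\mu < 1/16$. Finally, the definition of $\mathcal{C}_\zeta$ supplies $q_n \geq (1+\zeta)\|\mb w\|_\infty$ while $w_i \leq \|\mb w\|_\infty$, so $q_n - w_i \geq \zeta\|\mb w\|_\infty$, completing the bound $\mb u^{(i)\ast}\grad{\fs}{\mb q(\mb w)} \geq c\,\|\mb w\|_\infty\,\zeta$. I expect the only delicate step to be quantifying the $\tanh$ saturation tightly enough that the $-4\mu$ error from the MVT bound is strictly dominated by the $(1-2\mu^2)$ contribution from the positive term; this is precisely why the hypotheses $w_i \geq \mu\log(1/\mu)$ and $\mu < 1/16$ are imposed.
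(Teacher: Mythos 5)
Your proposal is correct, and it reaches the paper's key intermediate inequality $\tanh(w_i/\mu) - \tanh(q_n/\mu)\tfrac{w_i}{q_n} \geq c\,(q_n - w_i)$ by a genuinely different and arguably cleaner route. The paper fixes $s^2 = 1 - \norm{\mb w}^2 + w_i^2$, writes $q_n = \sqrt{s^2 - w_i^2}$, observes the inequality is an equality at $w_i = q_n$, and then shows the difference of the two sides is decreasing in $w_i$; this requires differentiating through the spherical constraint and controlling several $\sech^2$ terms against $\tanh(\log(1/\mu)) = \tfrac{1-\mu^2}{1+\mu^2}$, yielding the constant $c = \tfrac{1}{2}\bigl(\tfrac{1-\mu^2}{1+\mu^2} - 8\mu\bigr)$. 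You instead keep $q_n$ fixed, add and subtract $\tanh(q_n/\mu)$, kill the negative bracket with the mean value theorem plus $\sech^2(x) \leq 4e^{-2x}$ and the hypothesis $w_i \geq \mu\log(1/\mu)$, and bound the positive remainder using the saturation $1 - \tanh(x) \leq 2e^{-2x}$ together with $q_n \leq 1$; this gives the explicit constant $1 - 2\mu^2 - 4\mu$, which is positive for $\mu < 1/16$. Both arguments finish identically via $q_n - w_i \geq q_n - \norm{\mb w}_\infty = \zeta\norm{\mb w}_\infty$ from the definition of $\mathcal{C}_\zeta$. What your version buys is transparency: it isolates exactly where each hypothesis is used (the lower bound on $w_i$ forces $\tanh$-saturation, the upper bound on $\mu$ makes the error term subdominant) and avoids the coupled variation of $w_i$ and $q_n$ on the sphere; the paper's monotonicity argument, by contrast, is structured to verify the inequality over the whole admissible range of $w_i$ at once, which is the style it reuses in the dictionary-learning analogue. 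Your preliminary observations (tangency of $\mb u^{(i)}$, hence equality of the Riemannian and Euclidean projections, and $w_i > 0$ being forced by the hypothesis) are also correct and match what the paper uses implicitly.
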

\begin{proof}
	Please see Appendix \ref{neggradlemp}.
\end{proof}

%

Since we will use this property of the gradient in $ \mathcal{C}_{\zeta}$ to derive a convergence rate, we will be interested in bounding the probability that gradient descent initialized randomly with respect to a uniform measure on the sphere is initialized in $\mathcal{C}_{\zeta}$. This will require bounding the volume of this set, which is done in the following lemma:

\begin{lemma}[Volume of $\mathcal{C}_{\zeta}$] \label{vollem}
	For $\mathcal{C}_{\zeta}$ defined as in \eqref{czeq} we have 
	\[
	\frac{\mathrm{Vol}(\mathcal{C}_{\zeta})}{\mathrm{Vol}(\mathbb{S}^{n-1})} \geq \frac{1}{2n} -\frac{\log(n)}{n} \zeta
	\]
\end{lemma}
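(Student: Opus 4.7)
The plan is to decompose
\[
\mathrm{Vol}(\mathcal{C}_\zeta) \;=\; \mathrm{Vol}(\mathcal{C}_0) \;-\; \mathrm{Vol}(\mathcal{C}_0 \setminus \mathcal{C}_\zeta)
\]
and estimate the two pieces separately. The first term is exact: the signed permutation (hyperoctahedral) group acts on $\mathbb{S}^{n-1}$, and the $2n$ analogues of $\mathcal{C}_0$ (one for each signed standard basis vector) are congruent and tile the sphere up to a measure-zero boundary, so $\mathrm{Vol}(\mathcal{C}_0)/\mathrm{Vol}(\mathbb{S}^{n-1}) = 1/(2n)$.

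For the thin shell $\mathcal{C}_0 \setminus \mathcal{C}_\zeta$ I would pass to a Gaussian representation. If $\mb g \sim \mathcal{N}(0,I_n)$ and $\mb q = \mb g/\|\mb g\|$, then $\mb q$ is uniform on the sphere, and since the defining inequalities of $\mathcal{C}_0$ and $\mathcal{C}_\zeta$ involve only ratios of coordinates, $\mb q \in \mathcal{C}_0 \setminus \mathcal{C}_\zeta$ is equivalent to $\{M \leq g_n < (1+\zeta)M\}$, where $M := \max_{i \leq n-1}|g_i|$ is independent of $g_n$. Conditioning on $M$ and using that $\phi$ is decreasing on $[0,\infty)$, so that $\Phi((1+\zeta)m) - \Phi(m) \leq \zeta m\phi(m)$, yields
\[
\frac{\mathrm{Vol}(\mathcal{C}_0 \setminus \mathcal{C}_\zeta)}{\mathrm{Vol}(\mathbb{S}^{n-1})} \;\leq\; \zeta\,\mathbb{E}[M\phi(M)].
\]

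It remains to show $\mathbb{E}[M\phi(M)] \leq (\log n)/n$ up to numerical constants. The substitution $u = 2\Phi(t)-1$ converts the density of $M$ to $(n-1)u^{n-2}\,du$, reducing the task to bounding $(n-1)\int_0^1 t(u)\phi(t(u))\,u^{n-2}\,du$. For $t$ moderately large, Mill's ratio implies $t\phi(t) \leq C(1-u)\log(1/(1-u))$, and the elementary Beta-type integral
\[
\int_0^1 (1-u)\log(1/(1-u))\,u^{n-2}\,du \;=\; \frac{H_n-1}{n(n-1)} \;\leq\; \frac{C\log n}{n(n-1)}
\]
contributes the main term, while the $t = O(1)$ range (equivalently $u$ bounded away from $1$) contributes an exponentially small correction in $n$ that is absorbed into the constant.

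The main obstacle is this estimate on $\mathbb{E}[M\phi(M)]$: the naive pointwise bound $M\phi(M) \leq (2\pi e)^{-1/2}$ is only $O(1)$, so the proof has to exploit that $M$ concentrates near $\sqrt{2\log n}$, where $t\phi(t)$ is of size $\sqrt{\log n}/n$; integrating against the density of $M$ accounts for the extra logarithmic factor that appears in the statement.
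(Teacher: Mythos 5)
Your proposal is correct in substance but takes a genuinely different route from the paper's. The paper's proof writes $V_\zeta=\frac{1+\zeta}{\sqrt{\pi}}\int_0^\infty e^{-(1+\zeta)^2x^2}\mathrm{erf}^{n-1}(x)\,dx$ via the solid-angle formula, Taylor-expands at $\zeta=0$, proves $\partial_\zeta^2 V_\zeta>0$ so that the tangent line is a \emph{global} lower bound, and controls the slope at $\zeta=0$ through the second moment $\mathbb{E}\bigl[\norm{\mb X}_\infty^2\bigr]\leq 1+2\log(2n)$ of a Gaussian maximum. You instead split off the shell $\mathcal{C}_0\setminus\mathcal{C}_\zeta$ and bound it directly by conditioning on $M$ in the Gaussian representation, which yields $\zeta\,\mathbb{E}[M\phi(M)]$ for every $\zeta$ at once and so dispenses with the convexity step entirely; your identification of the $2n$ congruent tiles, the conditional estimate $\Phi((1+\zeta)m)-\Phi(m)\leq \zeta m\phi(m)$, and the Beta-type evaluation $\int_0^1(1-u)\log(1/(1-u))\,u^{n-2}\,du=(H_n-1)/(n(n-1))$ are all correct. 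The one thing your argument does not deliver as written is the constant: you obtain $\frac{1}{2n}-C\,\frac{\log n}{n}\,\zeta$ for an unspecified numerical $C$, whereas the lemma (and the failure probability $2\log(n)\zeta_0$ in Theorem \ref{newratelem} derived from it) asserts $C=1$. Since the true order of $\mathbb{E}[M\phi(M)]$ is $\Theta(\sqrt{\log n}/n)$ --- as your own closing heuristic observes --- the constant $1$ is recoverable by sharpening your Mill's-ratio step slightly and checking small $n$ directly, but you should either carry the constants through or note explicitly that the downstream probability bounds change only by a constant factor.
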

\begin{proof}
	Please see Appendix \ref{vollemp}.
\end{proof}
\subsection{Convergence rate}
Using the results above, one can obtain the following convergence rate:

\begin{theorem}[Gradient descent convergence rate for separable function] \label{newratelem}
	For any $0 < \zeta_0 < 1$, $r > \mu \log\left(\frac{1}{\mu} \right)
	$, Riemannian gradient descent with step size  $\eta < \min\left\{ \frac{c_1}{n},\frac{\mu}{2}\right\}$ on the separable objective \eqref{sepeq} with $\mu<\frac{c_2}{\sqrt{n} \log{n}}$, enters an $L^\infty$ ball of radius $r$ around a global minimizer in 
	\[
	T<\frac{C}{\eta}\left(\frac{\sqrt{n}}{r^{2}}+\log\left(\frac{1}{\zeta_0}\right)\right)
	\]
	iterations with probability 
	\[
	\mathbb{P} \geq 1-2 \log(n) \zeta_0,
	\]
	where $c_i,C>0$ are numerical constants.
\end{theorem}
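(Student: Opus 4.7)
The plan is to combine the three preceding lemmas via a per-step recursion on $\zeta_t = q_n(t)/\|\mathbf w_t\|_\infty - 1$ and split the trajectory into a saddle-escape phase and a local-convergence phase.

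For the initialization, the signed-permutation symmetry of $\fs$ partitions $\mathbb{S}^{n-1}$ into $2n$ congruent copies of $\mathcal{C}$, one per global minimizer $\pm\widehat{\mathbf e}_i$. Applying Lemma~\ref{vollem} to these $2n$ disjoint copies of $\mathcal{C}_{\zeta_0}$ gives
\[
\mathbb{P}\bigl(\mathbf q_0 \in \bigcup_\pi \pi\cdot\mathcal{C}_{\zeta_0}\bigr) \geq 2n\Bigl(\tfrac{1}{2n} - \tfrac{\log n}{n}\zeta_0\Bigr) = 1 - 2\log(n)\zeta_0,
\]
which is the claimed probability. On this event I relabel coordinates so that $\mathbf q_0 \in \mathcal{C}_{\zeta_0}$ is paired with $\widehat{\mathbf e}_n$ and work inside this section throughout.

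For the per-step analysis, I pick $i^\star(t) \in \arg\max_j |w_j(t)|$ so that $|w_{i^\star}| = \|\mathbf w_t\|_\infty$. Direct inspection of \eqref{ueq} shows that $\mathbf u^{(i^\star)}$ is tangent to $\mathbb{S}^{n-1}$ at $\mathbf q_t$ and that moving in the $-\mathbf u^{(i^\star)}$ direction strictly increases $\zeta$. Combining Lemma~\ref{neggradlem} with a Taylor expansion of the exponential map (absorbing the $O(\eta^2)$ curvature term using $\eta \leq c_1/n$ and the error in the $\tanh$-approximation using $\eta\leq\mu/2$) then yields a one-step recursion
\[
\zeta_{t+1} \geq \zeta_t\bigl(1+c\eta(1+\zeta_t)\bigr),
\]
valid as long as $\|\mathbf w_t\|_\infty \geq \mu\log(1/\mu)$. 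The hypothesis $r>\mu\log(1/\mu)$ ensures the hypothesis of Lemma~\ref{neggradlem} holds until the target is reached, and the recursion certifies $\zeta_t$ is nondecreasing so the iterate never leaves $\mathcal{C}$.

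For the iteration count, in the saddle-escape phase ($\zeta_t \leq 1$) the recursion reduces to $\zeta_{t+1}\geq(1+c\eta)\zeta_t$, yielding $O(\eta^{-1}\log(1/\zeta_0))$ steps to reach $\zeta_t = \Theta(1)$. In the local-convergence phase ($\zeta_t \geq 1$) I would combine Lemma~\ref{neggradlem}'s lower bound $\|\mathrm{grad}[\fs](\mathbf q_t)\| \geq c\|\mathbf w_t\|_\infty \zeta_t$ with the standard descent inequality $\fs(\mathbf q_{t+1}) - \fs(\mathbf q_t) \leq -c'\eta\|\mathrm{grad}[\fs](\mathbf q_t)\|^2$. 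Summing and using $\fs(\mathbf q_0) - \min\fs \leq c''\sqrt n$ (which follows from $h_\mu(q)\leq|q|$ and $\|\mathbf q\|_1\leq\sqrt n$) shows that the number of steps whose gradient norm exceeds $\tau$ is at most $O(\sqrt n/(\eta\tau^2))$. Taking $\tau\asymp r$ and translating $\|\mathrm{grad}[\fs](\mathbf q_t)\| \lesssim r$ into $\|\mathbf w_t\|_\infty \leq r$ via $\zeta_t \geq 1$ produces the $\sqrt n/(\eta r^2)$ term; summing with the first phase gives the bound.

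The main technical obstacle will be establishing the $\zeta$-recursion rigorously, since three sources of slippage act simultaneously: the argmax index $i^\star$ may jump between iterations (though $\zeta$ itself is $i^\star$-independent, which permits a worst-case argument); the Riemannian exponential map introduces $O(\eta^2)$ curvature corrections that must be absorbed into the constants via $\eta \leq c_1/n$; and the $\tanh$-approximation underlying Lemma~\ref{neggradlem} degrades near $\partial\mathcal{C}$, absorbed by $\mu\leq c_2/(\sqrt n\log n)$ together with $\eta\leq\mu/2$. These absorptions produce the constants appearing in the theorem's hypotheses.
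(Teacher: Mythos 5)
Your proposal follows essentially the same route as the paper's proof: the identical $2n$-fold union bound with Lemma~\ref{vollem} for the initialization probability, a geometric-growth recursion for $\zeta$ (the content of the paper's Lemma~\ref{zetalem} combined with Lemma~\ref{neggradlem}) giving the $\log(1/\zeta_0)$ phase, and then the descent inequality plus the gradient lower bound and $\fs(\mb q^{(0)})-\fs^\ast\leq\sqrt{n}$ giving the $\sqrt{n}/r^2$ phase. The technical obstacles you flag (curvature corrections from the exponential map, the argmax index changing, keeping $\|\mb w\|_\infty$ above $\mu\log(1/\mu)$ via the $\mu$ and step-size conditions) are exactly the ones the paper resolves in Lemmas~\ref{zetalem} and~\ref{czlemma}.
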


\begin{proof} Please see Appendix \ref{newratelemp}. \end{proof}



We have thus obtained a convergence rate for gradient descent that relies on the negative curvature around the stable manifolds of the saddles to rapidly move from these regions of the space towards the vicinity of a global minimizer. This is evinced by the logarithmic dependence of the rate on $\zeta$.
As was shown for orthogonal dictionary learning in \cite{sun2017complete}, we also expect a linear convergence rate due to strong convexity in the neighborhood of a minimizer, but do not take this into account in the current analysis.

\section{Dictionary Learning Convergence Rate} \label{dlsec}

The proofs in this section will be along the same lines as those of Section \ref{secsep}. While we will not describe the positions of the critical points explicitly, the similarity between this objective and the separable function motivates a similar argument. It will be shown that initialization in some $\mathcal{C}_\zeta$ will guarantee that Riemannian gradient descent makes uniform progress in function value until reaching the neighborhood of a global minimizer. We will first consider the population objective which corresponds to the infinite data limit 
\begin{equation} \label{dleq}
f_{\mathrm{DL}}^{\mathrm{pop}}(\mb q)\equiv\underset{\mathbf{X}_{0}}{\mathbb{E}}f_{\mathrm{DL}}(\mb q)={\mathbb{E}_{\mb x\sim_{\mathrm{i.i.d.}}\mathrm{BG}(\theta)}}\bigl[\,h_{\mu}(\mb q^{*}\mb x)\,\bigr].
\end{equation}
and then bounding the finite sample size fluctuations of the relevant quantities. We begin with a lemma analogous to Lemma \ref{neggradlem}:

%
%
%
%

%
%

\begin{lemma}[Dictionary learning population gradient] \label{neggraddllem}
	For $\mb w\in \mathcal{C}_{\zeta}, r < |w_{i}|,\mu<\cdlmu r^{5/2}\sqrt{\zeta}$ the dictionary learning population objective \ref{dleq} obeys
	\[
	\mathbf{u}^{(i)\ast}\mathrm{grad}[f_{\mathrm{DL}}^{\mathrm{pop}}](\mathbf{q}(\mathbf{w})) \geq \cdl r^3\zeta
	\]
	where $\cdl$ depends only on $\theta$, $\cdlmu$ is a positive numerical constant and $\mb u^{(i)}$ is defined in \ref{ueq}. 
\end{lemma}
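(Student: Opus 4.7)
The plan is to follow the template of Lemma~\ref{neggradlem} for the separable case, with the key new ingredient being Gaussian integration by parts (Stein's lemma) to handle the Bernoulli-Gaussian expectation. First, I would verify that $\mb u^{(i)}$ is tangent to $\bb S^{n-1}$ at $\mb q(\mb w)$: direct computation gives $\mb u^{(i)\ast}\mb q(\mb w) = \mr{sign}(w_i)w_i - (|w_i|/q_n)q_n = 0$, so the Riemannian projector drops out and
\[
\mb u^{(i)\ast}\grad{f_{\mr{DL}}^{\mr{pop}}}{\mb q} = \mb u^{(i)\ast}\nabla f_{\mr{DL}}^{\mr{pop}}(\mb q) = \mathbb{E}_{\mb x \sim \mr{BG}(\theta)}\bigl[\tanh(\mb q^\ast\mb x/\mu)\bigl(\mr{sign}(w_i)\,x_i - (|w_i|/q_n)\,x_n\bigr)\bigr].
\]
Without loss of generality I take $w_i > 0$, so $q_i = w_i$ and $\mr{sign}(w_i) = 1$.

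Next, I apply Stein's lemma component-wise. Writing $x_j = \Omega_j V_j$ with $V_j \sim \mc N(0,1)$ and $\Omega_j \sim \mr{Bern}(\theta)$ mutually independent, the $x_j$-contribution vanishes conditional on $\Omega_j = 0$; conditional on $\Omega_j = 1$, the independent Gaussian $V_j$ is exposed and
\[
\mathbb{E}\bigl[V_j\,\tanh((q_jV_j+R_j)/\mu)\,\big|\,\Omega_j=1\bigr] = (q_j/\mu)\,\mathbb{E}\bigl[\sech^2((q_jV_j+R_j)/\mu)\,\big|\,\Omega_j=1\bigr],
\]
where $R_j = \mb q^\ast \mb x - q_j x_j$ collects the remaining terms of the inner product. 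Applying this identity for $j \in \{i, n\}$ and assembling (the $w_i/q_n$ weight on the $n$-th term cancels the $q_n$ prefactor from Stein) yields the clean expression
\[
\mb u^{(i)\ast}\nabla f_{\mr{DL}}^{\mr{pop}}(\mb q) = \frac{\theta w_i}{\mu}\Bigl(\mathbb{E}[\sech^2(\mb q^\ast\widetilde{\mb x}^{(i)}/\mu)] - \mathbb{E}[\sech^2(\mb q^\ast\widetilde{\mb x}^{(n)}/\mu)]\Bigr),
\]
where $\widetilde{\mb x}^{(j)}$ agrees with $\mb x$ except that its $j$-th entry is replaced by an unconstrained standard Gaussian.

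The core step is to lower bound the bracketed $\sech^2$ difference. The intuition is that the forced Gaussian in $\widetilde{\mb x}^{(n)}$ is scaled by $q_n$ while that in $\widetilde{\mb x}^{(i)}$ is scaled by $w_i$, and on $\mc C_\zeta$ one has $q_n^2 - w_i^2 \geq 2\zeta w_i^2 \geq 2\zeta r^2$; since $\sech^2$ is strictly peaked at zero, this extra spread reduces $\mathbb{E}[\sech^2(\cdot/\mu)]$ and the difference is positive. To make this quantitative I would condition on $\Omega_n = 0$ in the first expectation and $\Omega_i = 0$ in the second (each event has probability $1 - \theta$), so that both expectations reduce to a 1-D Gaussian integral against the common residual $Z = \sum_{k \notin \{i,n\}}q_kx_k$. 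Using the sharp-peak asymptotic $\mathbb{E}_V[\sech^2((\sigma V + Z)/\mu)] \approx (2\mu/(\sqrt{2\pi}\sigma))\exp(-Z^2/(2\sigma^2))$ valid for $\mu \ll \sigma$, applying the fundamental theorem of calculus in $\sigma$ between $w_i$ and $q_n$, and exploiting the anti-concentration of $Z$ near zero, one obtains a lower bound of order $c_\theta \mu r^2 \zeta$ on the bracket; multiplying by the prefactor $\theta w_i/\mu \geq \theta r/\mu$ yields the claimed $c_\theta r^3 \zeta$ lower bound.

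The main obstacle I anticipate is the final quantitative comparison: the two $\sech^2$ expectations are individually of order $\mu$, so the desired $\Theta(\mu r^2\zeta)$ signal must be cleanly separated from higher-order Taylor corrections and from the contribution of the complementary large-residual event. The hypothesis $\mu < \cdlmu r^{5/2}\sqrt\zeta$ is precisely what is needed to ensure that second-order terms (scaling like $\mu^2/r^3$ from the Taylor remainder of $\sech^2$) are dominated by the leading $\mu r^2 \zeta$ contribution. A secondary subtlety is that $Z$ depends on all $n-2$ nuisance Bernoulli-Gaussian coordinates, but since $Z$ appears symmetrically in both expectations only its low-order moments and anti-concentration at zero enter the final estimate, so no exponential-in-$n$ loss is incurred.
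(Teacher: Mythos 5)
Your proposal follows essentially the same route as the paper's proof: Gaussian integration by parts conditional on the Bernoulli support reduces the projected gradient to $\tfrac{w_i\theta(1-\theta)}{\mu}$ times a difference of $\sech^2$ expectations, the asymptotic $\mathbb{E}[\sech^2(Y/\mu)]\approx\sqrt{2/\pi}\,\mu/\sigma_Y$ converts the variance gap $q_n^2-w_i^2\geq 2\zeta w_i^2$ into the $r^3\zeta$ signal, and the hypothesis on $\mu$ absorbs the Taylor remainders (which scale as $\mu^3/r^3$ in the bracket, not $\mu^2/r^3$ --- a harmless slip). The one place the paper is cleaner is that it conditions on the full Bernoulli pattern so the residual is exactly Gaussian and the comparison $\sigma\mapsto 1/\sqrt{\sigma_X^2+\sigma^2}$ is monotone pointwise, sidestepping the fact that your pointwise-in-$Z$ difference of sharp-peak densities changes sign for $|Z|>\sigma$ and genuinely requires the integration over $Z$ that you only allude to.
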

\begin{proof}
	Please see Appendix \ref{neggraddllemp}
\end{proof}
Using this result, we obtain the desired convergence rate for the population objective, presented in Lemma \ref{newratedllem} in Appendix \ref{dlapp}. After accounting for finite sample size fluctuations in the gradient, one obtains a rate of convergence to the neighborhood of a solution (which is some signed basis vector due to our choice $\mb A_0 = \mb I$)



\begin{theorem}[Gradient descent convergence rate for dictionary learning] \label{dlratethm}
	For any $1 > \zeta_0 > 0,s > \frac{\mu}{4\sqrt{2}}$, Riemannian gradient descent with step size $\eta < \frac{\cetadl \theta s}{n \log{np}}$
	on the dictionary learning objective \ref{dl2eq} with $\mu < \frac{\cmudl \sqrt{\zeta_0}}{n^{5/4}},\theta \in (0,\frac{1}{2})$
	, enters a ball of radius $\cdistfrommin s$ from a target solution in 
	
	\[
	T<\frac{ \Cdl}{\eta\theta}\left(\frac{1}{s} + n\log\frac{1}{\zeta_{0}}\right)
	\]
	iterations with probability 
	\[
	\mathbb{P} \geq 1- 2 \log(n) \zeta_0 - \mathbb{P}_{y} - \cb p^{-6}
	\]
	where $y=\frac{\cz \theta(1-\theta)\zeta_{0}}{n^{3/2}}$, $\mathbb{P}_{y}$ is given in Lemma \ref{uniflem} and $c_i,C_i$ are positive constants.
\end{theorem}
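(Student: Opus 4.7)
The plan is to extend the separable-function argument (Theorem \ref{newratelem}) to dictionary learning by first running the analysis against the population objective $f_{\mathrm{DL}}^{\mathrm{pop}}$ and then transferring the result to $f_{\mathrm{DL}}$ via uniform concentration of $\nabla f_{\mathrm{DL}}$ about $\nabla f_{\mathrm{DL}}^{\mathrm{pop}}$. The argument has four pieces: an initialization bound, a population descent analysis, a uniform perturbation bound, and a final iteration count matching the two terms of $T$.

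By the signed permutation symmetry of $f_{\mathrm{DL}}$, there are $2n$ symmetric copies of $\mathcal{C}$, each containing a single target solution. Lemma \ref{vollem} and a union bound over these sections show that with probability at least $1 - 2\log(n)\zeta_0$ the random initialization lies in $\mathcal{C}_{\zeta_0}$ for some target, which we may take WLOG to be $\widehat{\mb e}_n$. Working first with the population gradient, Lemma \ref{neggraddllem} applied with $i = \arg\max_j|w_j|$ gives $\mb u^{(i)\ast}\mr{grad}[f_{\mathrm{DL}}^{\mathrm{pop}}](\mb q(\mb w)) \geq c_\theta\|\mb w\|_\infty^3\zeta$; exactly as in Lemma \ref{newratedllem}, this drives geometric increase of $\zeta$ until $\zeta = \Theta(1)$, after which the gradient itself pulls $\|\mb w\|_\infty$ down toward the target axis.

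To transfer to the sample objective, I would invoke Lemma \ref{uniflem} to obtain $\sup_{\mb q \in \mathcal{C}_{\zeta_0}}\|\nabla f_{\mathrm{DL}}(\mb q) - \nabla f_{\mathrm{DL}}^{\mathrm{pop}}(\mb q)\| \leq y$ with probability at least $1 - \mathbb{P}_y - \cb p^{-6}$, for $y = \cz\theta(1-\theta)\zeta_0/n^{3/2}$. The scaling of $y$ is chosen so that $|\mb u^{(i)\ast}(\nabla f_{\mathrm{DL}} - \nabla f_{\mathrm{DL}}^{\mathrm{pop}})| \leq y$ is at most half of the population projection $c_\theta\|\mb w\|_\infty^3\zeta$ for all $\|\mb w\|_\infty \gtrsim 1/\sqrt{n}$ and $\zeta \geq \zeta_0$; matching these against the hypothesis $\mu < \cdlmu r^{5/2}\sqrt{\zeta}$ of Lemma \ref{neggraddllem} pins down the requirement $\mu < \cmudl\sqrt{\zeta_0}/n^{5/4}$. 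On the good events, $\mb u^{(i)\ast}\mr{grad}[f_{\mathrm{DL}}](\mb q) \geq \tfrac{1}{2}c_\theta\|\mb w\|_\infty^3\zeta$ at every iterate.

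With the step size $\eta < \cetadl\theta s/(n\log np)$ — chosen so that a discrete Riemannian step does not overshoot the continuous progress implied by the above projection bound, the $n\log np$ factor absorbing the Lipschitz constant of the sample gradient on the sphere — a Taylor expansion gives $\zeta_{t+1} \geq (1+c\eta\theta)\zeta_t$ throughout the escape phase. This phase takes $O(n\log(1/\zeta_0)/(\eta\theta))$ iterations to reach $\zeta = \Theta(1)$, producing the second term $n\log(1/\zeta_0)$. In the subsequent descent phase, once $\zeta$ is of constant order the gradient magnitude is bounded below by $\Omega(\theta\|\mb w\|_\infty)$, so $O(1/(\eta\theta s))$ iterations suffice to enter the ball of radius $\cdistfrommin s$, producing the first term $1/s$. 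A final union bound over bad initialization, bad concentration, and the auxiliary $p^{-6}$ event of Lemma \ref{uniflem} yields the stated probability. The main obstacle is the coupled scaling: the perturbation bound $y$ must lie below the population projection $c_\theta r^3\zeta$ at the \emph{worst} iterate on the trajectory (smallest $r \sim 1/\sqrt{n}$ and smallest $\zeta \sim \zeta_0$), which simultaneously constrains $\mu$, $y$, and $\eta$ and dictates the polynomial dependence of the parameters on $n$ and $\zeta_0$.
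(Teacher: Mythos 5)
Your overall architecture matches the paper's: initialization probability from Lemma \ref{vollem} with a union bound over the $2n$ sections, a first phase in which the projection $\mb u^{(i)\ast}\mathrm{grad}[f_{\mathrm{DL}}]$ drives a geometric increase of $\zeta$ (via Lemma \ref{zetalem}), a transfer from population to sample gradients via Lemma \ref{uniflem} with $y$ set to half the worst-case population projection $\sim \theta(1-\theta)\zeta_0/n^{3/2}$, a step size constrained by the sample gradient bound $\sqrt{n}\norm{\mb X}_\infty \lesssim \sqrt{n\log(np)}$, and a final union bound. The iteration counts for both phases agree with the paper's (note, though, that your displayed recursion $\zeta_{t+1}\geq(1+c\eta\theta)\zeta_t$ is inconsistent with your own count $O(n\log(1/\zeta_0)/(\eta\theta))$: the correct per-step factor carries a $1/n$, i.e.\ $1+c\eta\theta/n$, which is exactly where the $n$ in the second term of $T$ comes from).

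The one genuine gap is in the descent phase. You assert that ``once $\zeta$ is of constant order the gradient magnitude is bounded below by $\Omega(\theta\norm{\mb w}_\infty)$,'' but nothing you have invoked establishes this: Lemma \ref{neggraddllem} and its sample version control only the projection onto $\mb u^{(i)}$, which governs $\zeta$, not the radial component that shrinks $\norm{\mb w}$. In the paper this bound is imported from Theorem 2 of \cite{sun2017complete} (Proposition 7 for the population version), which gives $\mb w^{\ast}\nabla_{\mb w} g_{\mathrm{DL}}(\mb w)/\norm{\mb w}\geq c_\star\theta$ uniformly on $B^2_{1/20\sqrt{5}}(0)\setminus B^2_s(0)$ with probability $1-c_b p^{-6}$; this external event, not Lemma \ref{uniflem}, is the source of the $\cb p^{-6}$ term in the failure probability, which you have misattributed. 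The descent-phase count $O(1/(\eta\theta s))$ then follows by tracking $h(\mb q)=\norm{\mb w}^2/2$ along the exponential-map update with a second-order remainder bound $R<CM^2\eta^2$ (as in the proof of Lemma \ref{newratedllem}), which is also what forces the precise form $\eta<\cetadl\theta s/(n\log np)$. Without citing that result, the second phase of your argument does not close.
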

\begin{proof}
	Please see Appendix \ref{dlratethmp}
\end{proof}
The two terms in the rate correspond to an initial geometric increase in the distance from the set containing the small gradient regions around saddle points, followed by convergence to the vicinity of a minimizer in a region where the gradient norm is large. The latter is based on results on the geometry of this objective provided in \cite{sun2017complete}.

\section{Discussion}

The above analysis suggests that second-order properties - namely negative curvature normal to the stable manifolds of saddle points - play an important role in the success of randomly initialized gradient descent in the solution of complete orthogonal dictionary learning. This was done by furnishing a convergence rate guarantee that holds when the random initialization is not in regions that feed into small gradient regions around saddle points, and bounding the probability of such an initialization. In Appendix \ref{appgpr} we provide an additional example of a nonconvex problem that for which an efficient rate can be obtained based on an analysis that relies on negative curvature normal to stable manifolds of saddles - generalized phase retrieval. An interesting direction of further work is to more precisely characterize the class of functions that share this feature. 

The effect of curvature can be seen in the dependence of the maximal number of iterations $T$ on the parameter $\zeta_0$. This parameter controlled the volume of regions where initialization would lead to slow progress and the failure probability of the bound $1-\mathbb{P}$ was linear in $\zeta_0$, while $T$ depended logarithmically on $\zeta_0$. This logarithmic dependence is due to a geometric increase in the distance from the stable manifolds of the saddles during gradient descent, which is a consequence of negative curvature. Note that the choice of $\zeta_0$ allows one to flexibly trade off between $T$ and $1-\mathbb{P}$. By decreasing $\zeta_0$, the bound holds with higher probability, at the price of an increase in $T$. This is because the volume of acceptable initializations now contains regions of smaller minimal gradient norm. In a sense, the result is an extrapolation of works such as \cite{lee2017first} that analyze the $\zeta_0=0$ case to finite $\zeta_0$. 

Our analysis uses precise knowledge of the location of the stable manifolds of saddle points. 
For less symmetric problems, including variants of sparse blind deconvolution \cite{zhang2017global} and overcomplete tensor decomposition, there is no closed form expression for the stable manifolds. However, it is still possible to coarsely localize them in regions containing negative curvature. Understanding the implications of this geometric structure for randomly initialized first-order methods is an important direction for future work. 
%


One may hope that studying simple model problems and identifying structures (here, negative curvature orthogonal to the stable manifold) that enable efficient optimization will inspire approaches to broader classes of problems. 
One problem of obvious interest is the training of deep neural networks for classification, which shares certain high-level features with the problems discussed in this paper. The objective is also highly nonconvex and is conjectured to contain a proliferation of saddle points \cite{dauphin2014identifying}, yet these appear to be avoided by first-order methods \cite{goodfellow2014qualitatively} for reasons that are still quite poorly understood beyond the two-layer case \cite{venturi2018neural}. 

\clearpage
\newpage 
\appendix

\section{Proofs - Separable Objective}

\begin{proof}[\begin{minipage}{3.25in}\bf{Proof of Lemma \ref{seplemma}: (Critical point structure of separable objective)}\end{minipage}] \label{Rpf}
	Denoting by $\tanh(\frac{\mathbf{q}}{\mu})$ a vector in $\mathbb{R}^{n}$
	elements $\tanh(\frac{\mathbf{q}}{\mu})_{i}=\tanh(\frac{q_{i}}{\mu})$
	we have
	
	\[
	\mathrm{grad}[f_{Sep}](\mathbf{q})_{i}=(\mathbf{I}-\mathbf{q}\mathbf{q}^{\ast})\tanh(\frac{\mathbf{q}}{\mu})
	\]
	
	. Thus critical points are ones where either $\tanh(\frac{\mathbf{q}}{\mu})=\mathbf{0}$
	(which cannot happen on $\mathbb{S}^{n-1}$) or $\tanh(\frac{\mathbf{q}}{\mu})$
	is in the nullspace of $(\mathbf{I}-\mathbf{q}\mathbf{q}^{\ast})$,
	which implies $\tanh(\frac{\mathbf{q}}{\mu})=c\mathbf{q}$ for some
	constant $b$. The equation $\tanh(\frac{x}{\mu})=bx$ has either
	a single solution at the origin or 3 solutions at $\{0,\pm r(b)\}$ for
	some $r(b)$. Since this equation must be solves simultaneously for every
	element of $\mathbf{q}$, we obtain $\forall i\in[n]:\text{ }q_{i}\in\{0,\pm r(b)\}$.
	To obtain solutions on the sphere, one then uses the freedom we have
	in choosing $b$ (and thus $r(b)$) such that $\left\Vert \mathbf{q}\right\Vert =1$.
	The resulting set of critical points is thus 
	
	\[
	A=\mathcal{P}_{\mathbb{S}^{n-1}}\Bigl[\,\{-1,0,1\}^{n}\setminus\set{\mb0}\,\Bigr].
	\]
	
	To prove the form of the stable manifolds, we first show that for
	$q_{i}$ such that $\left\vert q_{i}\right\vert =\left\Vert \mathbf{q}\right\Vert _{\infty}$
	and any $q_{j}$ such that $\left\vert q_{j}\right\vert +\Delta=\left\vert q_{i}\right\vert $
	and sufficiently small $\Delta>0$, we have 
	
	\begin{equation} \lbl{sepgradeq}
	-\mathrm{grad}[f_{Sep}](\mathbf{q})_{i}\mathrm{sign}(q_{i})>-\mathrm{grad}[f_{Sep}](\mathbf{q})_{i}\mathrm{sign}(q_{j})
	\end{equation}
	For ease of notation we now assume $q_{i},q_{j}>0$ and hence $\Delta=q_{i}-q_{j}$,
	otherwise the argument can be repeated exactly with absolute values
	instead. The above inequality can then be written as 
	
	\[
	\underbrace{(q_{i}-q_{j})\underset{k=1}{\overset{n}{\sum}}\tanh(\frac{q_{k}}{\mu})q_{k}-\tanh(\frac{q_{i}}{\mu})+\tanh(\frac{q_{j}}{\mu})}_{\equiv h}>0.
	\]

	If we now define $s^{2}=\underset{\begin{array}{c}
		k=1\\
		k\neq i,n
		\end{array}}{\overset{n-1}{\sum}}q_{k}^{2}$ and $q_{n}=\sqrt{1-s^{2}-(q_{j}+\Delta)^{2}}$we have
	
	\[
	h=\begin{array}{c}
	\Delta\left(\begin{array}{c}
	\tanh(\frac{q_{j}+\Delta}{\mu})\left(q_{j}+\Delta\right)+\\
	\tanh(\frac{\sqrt{1-s^{2}-(q_{j}+\Delta)^{2}}}{\mu})\sqrt{1-s^{2}-(q_{j}+\Delta)^{2}}
	\end{array}\right)\\
	+\Delta\underset{k\neq i,n}{\sum}\tanh(\frac{q_{k}}{\mu})q_{k}-\tanh(\frac{q_{j}+\Delta}{\mu})+\tanh(\frac{q_{j}}{\mu})
	\end{array}
	\]
	
	\[
	=\Delta\left(\begin{array}{c}
	\underbrace{\begin{array}{c}
		\underset{k\neq i,n}{\sum}\tanh(\frac{q_{k}}{\mu})q_{k}+\tanh(\frac{q_{j}}{\mu})q_{j}\\
		+\tanh(\frac{\sqrt{1-s^{2}-q_{j}^{2}}}{\mu})\sqrt{1-s^{2}-q_{j}^{2}}
		\end{array}}_{\equiv h_{1}}\\
	-\underbrace{\mathrm{sech}^{2}(\frac{q_{j}}{\mu})\frac{1}{\mu}}_{\equiv h_{2}}
	\end{array}\right)+O(\Delta^{2})
	\]
	
	where the $O(\Delta^{2})$ term is bounded. Defining a vector $\mathbf{r}\in\mathbb{R}^{n}$
	by 
	
	\[
	k\neq i,n:r_{k}=q_{k},r_{i}=\tanh(\frac{q_{j}}{\mu})q_{j},r_{n}=\sqrt{1-s^{2}-q_{j}^{2}}
	\]
	
	we have $\left\Vert \mathbf{r}\right\Vert ^{2}=1$. Since $\tanh(x)$
	is concave for $x>0$, and $\left\vert r_{i}\right\vert \leq1$, we
	find
	
	\[
	h_{1}=\underset{k=1}{\overset{n}{\sum}}\tanh(\frac{r_{k}}{\mu})r_{k}\geq\tanh(\frac{1}{\mu})\underset{k=1}{\overset{n}{\sum}}r_{k}^{2}=\tanh(\frac{1}{\mu}).
	\]
	
	From $\left\vert q_{i}\right\vert =\left\Vert \mathbf{q}\right\Vert _{\infty}$
	it follows that $q_{i}\geq\frac{1}{\sqrt{n}}$ and thus $q_{j}\geq\frac{1}{\sqrt{n}}-\Delta$.
	Using this inequality and properties of the hyperbolic secant we obtain
	
	\[
	h_{2}\leq4\exp(-2\frac{q_{j}}{\mu}-\log\mu)\leq\exp(\frac{2\Delta}{\mu}-\frac{2}{\mu\sqrt{n}}-\log\mu+\log4)
	\]
	
	and plugging in $\mu=\frac{c}{\sqrt{n}\log n}$ for some $c<1$
	
	\[
	\leq\exp(\frac{2\Delta}{\mu}-\frac{2\log n}{c}-\log c+\frac{1}{2}\log n+\log\log n+\log4).
	\]
	
	We can bound this quantity by a constant, say $h_{2}\leq\frac{1}{2}$,
	by requiring 
	
	\[
	A\equiv\frac{2\Delta}{\mu}-\log c+(\frac{1}{2}-\frac{2}{c})\log n+\log\log n\leq-\log8
	\]
	
	and for and $c<1$, using $-\log n+\log\log n<0$ we have 
	
	\[
	A<\frac{2\Delta}{\mu}-\log c-(\frac{2}{c}-1)\log n.
	\]
	
	Since $\Delta$ can be taken arbitrarily small, it is clear that $c$
	can be chosen in an $n$-independent manner such that $A\leq-\log8$.
	We then find
	
	\[
	h_{1}-h_{2}\geq\tanh(\frac{1}{\mu})-\frac{1}{2}\geq\tanh(\sqrt{n}\log n)-\frac{1}{2}>0
	\]
	
	since this inequality is strict, $\Delta$ can be chosen small enough
	such that $\left\vert O(\Delta^{2})\right\vert <\Delta (h_{1}-h_{2})$ and
	hence 
	
	\[
	h>0,
	\]
	
	proving \ref{sepgradeq}. 
	
	It follows that under negative gradient flow, a point
	with $|q_{j}|< ||\mb q ||_\infty$ cannot flow to a point $\mb q'$ such that $|q'_{j}|= ||\mb q' ||_\infty$. From the form of the critical points, for every such $j$, $\mb q$ must thus flow to a point such that $q'_{j}=0$ (the value of the $j$ coordinate cannot pass through 0 to a point where $|q'_{j}|= ||\mb q' ||_\infty$ since from smoothness of the objective this would require passing some $\mb q''$  with $q''_j=0$, at which point $\grad{\fs}{\mb q''}_j=0$).
	
	As for the maximal magnitude coordinates, if there is more than one coordinate satisfying
	$\left\vert q_{i_{1}}\right\vert =\left\vert q_{i_{2}}\right\vert =\left\Vert \mathbf{q}\right\Vert _{\infty}$,
	it is clear from symmetry that at any subsequent point $\mathbf{q}'$
	along the gradient flow line $\left\vert q'_{i_{1}}\right\vert =\left\vert q'_{i_{2}}\right\vert $.
	These coordinates cannot change sign since from the smoothness of the objective this would require that they pass through a point where they have magnitude smaller than $1/\sqrt{n}$, at which point some other coordinate must have a larger magnitude (in order not to violate the spherical constraint), contradicting the above result for non-maximal elements. It follows that the sign pattern of these elements is
	preserved during the flow. Thus there is a single critical point to
	which any $\mathbf{q}$ can flow, and this is given by setting all
	the coordinates with $\left\vert q_{j}\right\vert <\left\Vert \mathbf{q}\right\Vert _{\infty}$
	to 0 and multiplying the remaining coordinates by a positive constant
	to ensure the resulting vector is on $\mathbb{S}^{n}$. Denoting this
	critical point by $\mb \alpha$, there is a vector $\mb b$ such that $\mathbf{q}=\mathcal{P}_{\mathbb{S}^{n-1}}\left[\mb a(\mb \alpha)+\mb b\right]$
	and $\mathrm{supp}(\mb a(\mb \alpha))\cap\mathrm{supp}(\mb b)=\varnothing$,
	$\left\Vert \mb b\right\Vert _{\infty}<1$ with the form of $\mb a(\mb \alpha)$
	given by \ref{Aeq} . The collection of all such points defines the stable
	manifold of $\mb \alpha$. 	
\end{proof}

\begin{proof}[\bf{Proof of Lemma \ref{neggradlem}}: (Separable objective gradient projection)] \label{neggradlemp}
	
	i) We consider the $\mathrm{sign}( w_{i})=1$ case; the $\mathrm{sign}( w_{i})=-1$
	case follows directly. Recalling that $\mathbf{u}^{(i)\ast}\mathrm{grad}[\fs](\mathbf{q}(\mathbf{w}))=\tanh\left(\frac{w_{i}}{\mu}\right)-\tanh\left(\frac{q_{n}}{\mu}\right)\frac{w_{i}}{q_{n}}$, we first prove 
	
	\begin{equation} \lbl{gradqweq}
	\tanh\left(\frac{w_i}{\mu}\right)-\tanh\left(\frac{q_{n}}{\mu}\right)\frac{w_i}{q_{n}}\geq c(q_{n}-w_i)
	\end{equation}
	
	for some $c>0$ whose form will be determined later. The inequality clearly holds for $ w_i=q_{n}$. To verify that it holds for smaller values of $w_i$ as well, we now show that 
	
	\[
	\frac{\partial}{\partial w_i}\left[\tanh\left(\frac{w_i}{\mu}\right)-\tanh\left(\frac{q_{n}}{\mu}\right)\frac{w_i}{q_{n}}-c(q_{n}-w_i)\right]  < 0
	\]
	
	which will ensure that it holds for all $w_i$. We define $s^2=1-|| \mb w ||^2 + w_i^2$ and denote $q_{n}=\sqrt{s^{2}-w_i^{2}}$
	to extract the $w_i$ dependence, giving
	
	\[
	\frac{\partial}{\partial w_i}\left[\tanh\left(\frac{w_i}{\mu}\right)-\tanh\left(\frac{q_{n}}{\mu}\right)\frac{w_i}{q_{n}}-c(q_{n}-w_i)\right]
	\]
	
	\[
	=\begin{array}{c}
	\frac{1}{\mu}\mathrm{sech}^{2}\left(\frac{w_i}{\mu}\right)+\frac{1}{\mu}\mathrm{sech}^{2}\left(\frac{\sqrt{s^{2}-w_i^{2}}}{\mu}\right)\frac{w_i^{2}}{s^{2}-w_i^{2}}\\
	-\tanh\left(\frac{\sqrt{s^{2}-w_i^{2}}}{\mu}\right)\frac{s^{2}}{(s^{2}-w_i^{2})^{3/2}}+c(\frac{w_i}{\sqrt{s^{2}-w_i^{2}}}+1)
	\end{array}
	\]
	
	\[
	\leq\begin{array}{c}
	\frac{4}{\mu}\left(e^{-2\frac{w_i}{\mu}}+e^{-2\frac{\sqrt{s^{2}-w_i^{2}}}{\mu}}\right)\\
	-\tanh\left(\frac{\sqrt{s^{2}-w_i^{2}}}{\mu}\right)\frac{s^{2}}{(s^{2}-w_i^{2})^{3/2}}+2c
	\end{array}	
	\]
	
	Where in the last inequality we used properties of the $\mathrm{sech}$ function and $q_n \geq w_i$. We thus want to show 
	
	\[
	\frac{4}{\mu}\left(e^{-2\frac{w_i}{\mu}}+e^{-2\frac{q_{n}}{\mu}}\right)+2c\leq\tanh\left(\frac{q_{n}}{\mu}\right)\frac{q_{n}^{2}+w_i^{2}}{q_{n}^{3}}
	\]
	
	and using $\log(\frac{1}{\mu})\mu\leq w_i\leq q_{n}$ and $c = \frac{\frac{1-\mu^{2}}{1+\mu^{2}}-8\mu}{2}$ we have 
	
	\[
	\frac{4}{\mu}\left(e^{-2\frac{w_i}{\mu}}+e^{-2\frac{q_{n}}{\mu}}\right)+2c
	\]
	\[
	\leq\frac{8e^{-2\frac{w_i}{\mu}}}{\mu}+2c\leq8\mu+2c\leq\frac{1-\mu^{2}}{1+\mu^{2}}
	\]
	
	\[
	=\tanh\left(\log(\frac{1}{\mu})\right)\leq\tanh\left(\frac{q_{n}}{\mu}\right)\frac{1}{q_{n}} 
	\]
	\[
	< \tanh\left(\frac{q_{n}}{\mu}\right)\frac{q_{n}^{2}+w_i^{2}}{q_{n}^{3}}
	\]
	and it follows that \ref{gradqweq} holds. For $\mu < \frac{1}{16}$ we are guaranteed that $c>0$. 
	
	From examining the RHS of \ref{gradqweq} (and plugging in $q_{n}=\sqrt{s^{2}-w_i^{2}}$) we see that any lower bound on the gradient
	of an element $w_{j}$ applies also to any element $\left\vert w_{i}\right\vert \leq \left\vert w_{j}\right\vert $. Since for $|w_j| = || \mb w ||_\infty$ we have $q_{n}-w_j = w_j \zeta$, for every $\log(\frac{1}{\mu})\mu\leq w_i$ we obtain the bound
	
	\[
	\mathbf{u}^{(i)\ast}\mathrm{grad}[\fs](\mathbf{q}(\mathbf{w})) \geq c\left\Vert \mb w\right\Vert _{\infty}\zeta
	\]

	%
\end{proof}

\begin{proof}[\bf{Proof of Theorem \ref{newratelem}}: (Gradient descent convergence rate for separable function)] \label{newratelemp}

	We obtain a convergence rate by first bounding the number of iterations of Riemannian gradient descent in $\mathcal{C}_{\zeta_0} \backslash \mathcal{C}_1$, and then considering $\mathcal{C}_1 \backslash B_r^\infty$.
	
	From Lemma \ref{czlemma} we obtain $\mathcal{C}_{\zeta_0} \backslash \mathcal{C}_1 \subseteq \mathcal{C}_{\zeta_0} \backslash B^\infty_{1/\sqrt{n+3}}$. Choosing $c_2$ so that $\mu < \frac{1}{2}$, we can apply Lemma \ref{neggradlem}, and for $\mb u$ defined in \ref{ueq}, we thus have 
	\[
	|w_i| > \mu \log(\frac{1}{\mu}) \Rightarrow \mathbf{u}^{(i)\ast}\mathrm{grad}[\fs](\mathbf{q}(\mathbf{w})) > c || \mb w ||_{\infty}  \zeta_0.
	\]
	Since from Lemma \ref{gradupperlemsep} the Riemannian gradient norm is bounded by $\sqrt{n}$, we can choose $c_1,c_2$ such that  $\mu\log(\frac{1}{\mu})<\frac{1}{2\sqrt{n+3}},\eta<\frac{1}{6\sqrt{n^{2}+3n}}$. This choice of $\eta$ then satisfies the conditions of Lemma \ref{zetalem} with $r=\mu\log(\frac{1}{\mu}), b=\frac{1}{\sqrt{n+3}},M=\sqrt{n}$, which gives that after a gradient step 
	\begin{equation} \lbl{zchangeq}
	\zeta'\geq\zeta\left(1+\frac{c}{2}\sqrt{\frac{n}{n+3}}\eta\right)\geq\zeta\left(1+\tilde{c}\eta\right)
	\end{equation}
	
	for some suitably chosen $\tilde{c}>0$. If we now define by $\mathbf{w}^{(t)}$ the $t$-th iterate of Riemannian gradient descent and $\zeta^{(t)}\equiv\frac{q_{n}^{(t)}}{\left\Vert \mathbf{w}^{(t)}\right\Vert _{\infty}}-1,\zeta^{(0)} \equiv \zeta_0$, for iterations such that $\mathbf{w}^{(t)} \in \mathcal{C}_{\zeta} \backslash \mathcal{C}_1$ we find 
	\[
	\zeta^{(t)}\geq\zeta^{(t-1)}\left(1+\tilde{c}\eta\right)\geq\zeta_0\left(1+\tilde{c}\eta\right)^{t}
	\]
	and the number of iterations required to exit $\mathcal{C}_{\zeta_0} \backslash \mathcal{C}_1$ is 
	\begin{equation}  \lbl{t_1_eq}
	t_{1}=\frac{\log(\frac{1}{\zeta_0})}{\log(1+\tilde{c}\eta)}.
	\end{equation}
	
	To bound the remaining iterations, we use Lemma \ref{neggradlem} to obtain that for every $\mathbf{w}\in \mathcal{C}_{\zeta_0}\backslash B_r^\infty$, 
	\[
	\left\Vert \mathrm{grad}[\fs](\mathbf{q}(\mathbf{w}))\right\Vert ^{2}\geq \frac{\left\Vert \mathbf{u}^{(i)\ast}\mathrm{grad}[\fs](\mathbf{q}(\mathbf{w}))\right\Vert^2}{||\mathbf{u}^{(i)}||^2} \geq \zeta_0^2 c^2r^2
	\]
	
	where we have used $||\mathbf{u}^{(i)}||^2 = 1+\frac{w_i^2}{q_n^2} \leq 2$. We thus have
	
	\[
	\underset{i=0}{\overset{T-1}{\sum}}\left\Vert \mathrm{grad}[\fs](\mathbf{q}(\mathbf{w})^{(i)})\right\Vert ^{2}
	\]
	\[
	=\underset{i=0}{\overset{t_{1}-1}{\sum}}\left\Vert \mathrm{grad}[\fs](\mathbf{q}(\mathbf{w})^{(i)})\right\Vert ^{2}+\underset{i=t_{1}}{\overset{T-1}{\sum}}\left\Vert \mathrm{grad}[\fs](\mathbf{q}(\mathbf{w})^{(i)})\right\Vert ^{2}
	\]
	\begin{equation} \lbl{teq2}
	> \frac{ \zeta_0^{2} c^{2}}{(n+3)} t_{1}+(T-t_{1})c^{2}r^2 .
	\end{equation}
	
	Choosing $\eta < \frac{1}{2L}$ where $L$ is the gradient Lipschitz constant of $f_s$, from Lemma \ref{riemgdlem} we obtain 
	\[
	\frac{2\left(\fs(\mb q^{(0)})-\fs^{\ast}\right)}{\eta}>\underset{i=0}{\overset{T-1}{\sum}}\left\Vert \mathrm{grad}[\fs](\mathbf{q}^{(i)})\right\Vert ^{2}.
	\]
	According to Lemma \ref{lipproof}, $L=1/\mu$ and thus the above holds if we demand $\eta < \frac{\mu}{2}$. Combining \ref{t_1_eq} and \ref{teq2} gives

	\[
	T<\frac{2\left(\fs(\mb q^{(0)})-\fs^{\ast}\right)}{\eta c^{2}r^{2}}+\frac{\left(1-\frac{\zeta_0^{2}}{(n+3)r^{2}}\right)\log(\frac{1}{\zeta_0})}{\log(1+\tilde{c}\eta)}.
	\]
	
	To obtain the final rate, we use in $g(\mb w^{0})-g^{\ast} \leq \sqrt{n}$ and $\tilde{c}\eta<1\Rightarrow\frac{1}{\log(1+\tilde{c}\eta)}<\frac{\tilde{C}}{\tilde{c}\eta}$ for some $\tilde{C}>0$. Thus one can choose $C>0$ such that
	\begin{equation} \lbl{teq}
	T<\frac{C}{\eta}\left(\frac{\sqrt{n}}{r^{2}}+\log(\frac{1}{\zeta_0})\right).
	\end{equation}
	
	From Lemma \ref{seplemma} the ball $B_r^\infty$ contains a global minimizer of the objective, located at the origin. 
	
	The probability of initializing in $\underset{\breve{A}}{\bigcup} \mathcal{C}_{\zeta_0}$ is simply given from Lemma \ref{vollem} and by summing over the $2n$ possible choices of $\mathcal{C}_{\zeta_0}$, one for each global minimizer (corresponding to a single signed basis vector). 
	
\end{proof}

\begin{lemma}[Riemannian gradient descent iterate bound] \label{riemgdlem}
	For a Riemannian gradient descent algorithm on the sphere with
	step size $t_k < \frac{1}{2L}$, where $L$ is a lipschitz
	constant for $\nabla f ({\V q})$, one has
	\begin{align*}
	f(\V q_1) -f(\V q^\star) &\geq
	f(\V q_1) - f(\V q_T) \\
	&\geq 
	\frac{t_k}{2}\norm{\grad{f}{\V q_k}}^2.
	\end{align*}
\end{lemma}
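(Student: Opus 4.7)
The plan is to establish a per-iteration descent inequality and then telescope. The first inequality $f(\V q_1) - f(\V q^\star) \geq f(\V q_1) - f(\V q_T)$ is immediate from the optimality of $\V q^\star$. For the second, I would prove the one-step descent bound $f(\V q_{k+1}) \leq f(\V q_k) - \tfrac{t_k}{2}\|\grad{f}{\V q_k}\|^2$ and then sum over iterations; because every per-step decrease is nonnegative, the resulting telescoping sum $f(\V q_1) - f(\V q_T) \geq \sum_{k=1}^{T-1}\tfrac{t_k}{2}\|\grad{f}{\V q_k}\|^2$ upper-bounds any single summand, which is exactly the stated inequality.

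To prove per-step descent, I would parametrize the update by the constant-speed geodesic $\V\gamma(s) = \exp_{\V q_k}(-s\,\grad{f}{\V q_k})$, so $\V\gamma(0)=\V q_k$ and $\V\gamma(t_k)=\V q_{k+1}$, and set $\phi(s) = f(\V\gamma(s))$. Using $\grad{f}{\V q_k} = (\V I - \V q_k\V q_k^\ast)\nabla f(\V q_k)$ and $\V\gamma'(0) = -\grad{f}{\V q_k} \in T_{\V q_k}\bb S^{n-1}$, the first derivative is $\phi'(0) = \langle \nabla f(\V q_k), \V\gamma'(0)\rangle = -\|\grad{f}{\V q_k}\|^2$. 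A bound on $\phi''(s)$ follows from the Euclidean $L$-Lipschitz property of $\nabla f$ and the explicit formula for spherical geodesics: since $\V\gamma''(s) = -\|\grad{f}{\V q_k}\|^2 \V\gamma(s)$ is normal to the manifold, only the tangential variation of $\nabla f$ along $\V\gamma$ contributes to $\phi''$, and since $\|\V\gamma(s)-\V q_k\|\leq s\|\grad{f}{\V q_k}\|$, one obtains $|\phi''(s)| \leq L\|\grad{f}{\V q_k}\|^2$.

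Applying Taylor's theorem on $[0,t_k]$ yields $\phi(t_k) \leq \phi(0) + t_k\phi'(0) + \tfrac{t_k^2}{2}\sup_{s\in[0,t_k]}|\phi''(s)| \leq f(\V q_k) - \bigl(t_k - \tfrac{L t_k^2}{2}\bigr)\|\grad{f}{\V q_k}\|^2$, and the hypothesis $t_k < 1/(2L)$ forces $t_k - \tfrac{Lt_k^2}{2} > \tfrac{t_k}{2}$, delivering the one-step descent. Summing from $k=1$ to $T-1$ and discarding nonnegative summands then yields the lemma.

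The main obstacle is the careful handling of $\phi''$ on the sphere: the geodesic curves away from the tangent plane, so one must verify that the normal component of $\V\gamma''$ contributes only through $\nabla f \cdot \V\gamma$ (which is tangential-independent) and that the residual tangential variation of $\nabla f$ along the curve is controlled by the Euclidean Lipschitz constant $L$ without an extra spherical curvature blowup. This is a standard computation but is the only nonroutine step; the rest of the argument is a direct descent-and-telescope pattern.
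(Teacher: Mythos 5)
Your proposal follows essentially the same route as the paper: a second-order Taylor expansion of $f\circ\Exp_{\V q_k}$ along the geodesic, a bound on the second-order term via the Euclidean Lipschitz constant, the resulting one-step descent inequality under $t_k<\tfrac{1}{2L}$, and a telescoping sum. The only substantive issue is your claimed bound $\abs{\phi''(s)}\leq L\norm{\grad{f}{\V q_k}}^2$. Writing $g=\norm{\grad{f}{\V q_k}}$, one has
\begin{equation*}
\phi''(s)=\ip{\V\gamma'(s),\nabla^2 f(\V\gamma(s))\,\V\gamma'(s)}+\ip{\nabla f(\V\gamma(s)),\V\gamma''(s)},\qquad \V\gamma''(s)=-g^2\V\gamma(s),
\end{equation*}
and the second term does \emph{not} reduce to a tangential variation: it equals $-g^2\ip{\nabla f(\V\gamma(s)),\V\gamma(s)}$, which is controlled only because $\nabla f(\V 0)=\V 0$ and $\nabla f$ is $L$-Lipschitz, giving $\norm{\nabla f(\V\gamma(s))}\leq L\norm{\V\gamma(s)}=L$. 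This is exactly the step the paper makes explicit when bounding $\norm{\Hess{f}{\V q_k}}\leq\norm{\nabla^2 f(\V q_k)}+\abs{\ip{\nabla f(\V q_k),\V q_k}}\leq 2L$; you should invoke the same fact rather than asserting the normal contribution is harmless. The correct bound is therefore $\abs{\phi''(s)}\leq 2Lg^2$, which yields $\phi(t_k)\leq\phi(0)-\bigl(t_k-Lt_k^2\bigr)g^2$, and $t_k<\tfrac{1}{2L}$ still gives $t_k-Lt_k^2>\tfrac{t_k}{2}$, so your conclusion survives — but the stepsize condition $t_k<\tfrac{1}{2L}$ is needed precisely because of this factor of $2$, and your version would misleadingly suggest $t_k<\tfrac{1}{L}$ suffices.
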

\begin{proof}
	Just as in the euclidean setting, we can obtain a lower bound on
	progress in function values of iterates of the Riemannian
	gradient descent algorithm from a lower bound on
	the Riemannian gradient. Consider $f : S^{n-1} \to \bb R$,
	which has $L$-lipschitz gradient. Let $\V q_k$ denote the
	current iterate of Riemannian gradient descent, and let $t_k >
	0$ denote the step size. Then we can form the Taylor
	approximation to $f \circ \Exp_{\V q_k}(\V v)$ at $\V 0_{\V
		q_k}$:
	\begin{equation*}
	\hat{f} : B_1(\V 0_{\V q_k}) \cap T_{\V q_k}S^{n-1} \to \bb R : \V v \mapsto f(\V q_k)
	+ \ip{\V v, \nabla f(\V q_k)}.
	\end{equation*}
	From Taylor's theorem, we have for any $\V
	v \in B_1(\V 0_{\V q_k}) \cap T_{\V q_k}S^{n-1}$
	\begin{equation*}
	\abs{ \hat{f}(\V v) - f \circ \Exp_{\V q_k}(\V v)} \leq
	\frac{1}{2} \norm{\Hess{f}{\V q_k}}\norm{\V v - \V 0_{\V
			q_k}}^2,%
	\end{equation*}
	where the matrix norm is the operator norm on $\bb R^{n \times
		n}$. Using the gradient-lipschitz property of $f$, we readily
	compute
	\begin{align*}
	\norm{\Hess{f}{\V q_k}} &\leq \norm{\nabla^2 f(\V q_k)} +
	\abs{\ip{ \nabla f(\V q_k), \V q_k}} \\
	&\leq 2L,
	\end{align*}
	since $\nabla f(\V 0) = \V 0$ and $\V q_k \in S^{n-1}$. We thus have
	\begin{equation*}
	f \circ \Exp_{\V q_k}(\V v) \leq f(\V q_k) + \ip{\V v, \nabla
		f(\V q_k)} + L\norm{\V v}^2.
	\end{equation*}
	If we put $\V v = -t_k \Grad{f}{\V q_k}$ and write $\V q_{k+1} =
	\Exp_{\V q_k}(-t_k \grad{f}{\V q_k})$, the previous
	expression becomes
	\begin{align*}
	f(\V q_{k+1}) &\leq f(\V q_k) - t_k \norm{\grad{f}{\V q_k}}^2 +
	t_k^2L\norm{\grad{f}{\V q_k}}^2 \\
	&\leq f(\V q_k) - \frac{t_k}{2}\norm{\grad{f}{\V q_k}}^2
	\end{align*}
	if $t_k < \tfrac{1}{2L}$. Thus progress in objective value is
	guaranteed by lower-bounding the Riemannian gradient.
	
	As in the euclidean setting, summing the previous expression
	over iterations $k$ now yields
	\begin{align*}
	\sum_{k=1}^{T-1} f(\V q_{k}) - f(\V q_{k+1}) &= f(\V q_1) -
	f(\V q_{T})\\
	&\geq \frac{t_k}{2}\sum_{k=1}^{T-1}\norm{\grad{f}{\V q_k}}^2;
	\end{align*}
	in addition, it holds $f(\V q_1) - f(\V q_T) \leq f(\V q_1) -
	f(\V q^\star)$. Plugging in a constant step size gives the desired result.
\end{proof}

\begin{lemma}[Lipschitz constant of $\nabla f$] \label{lipproof}
	For any $\V x_1, \V x_2 \in \bb R^n$, it holds
	\begin{equation*}
	\norm{\nabla f(\V x_1) - \nabla f(\V x_2)} \leq
	\frac{1}{\mu} \norm{\V x_1 - \V x_2}.
	\end{equation*}
	\begin{proof}
		It will be enough to study a single coordinate function of
		$\nabla f$. Using a derivative given in section \ref{derivs}, we
		have for $x \in \bb R$
		\begin{equation*}
		\frac{d}{d x} \tanh(x / \mu) =
		\frac{1}{\mu}\sech^2\left(\frac{x}{\mu}\right). 
		\end{equation*}
		A bound on the magnitude of the derivative of this smooth
		function implies a lipschitz constant for $x \mapsto \tanh(x /
		\mu)$. To find the bound, we differentiate again and find the
		critical points of the function. We have, using the chain rule,
		\begin{align*}
		\frac{d}{dx} \left( \frac{1}{\mu}
		\sech^2\left(\frac{x}{\mu}\right)
		\right) &= \frac{-4}{\mu}\sech\left(\frac{x}{\mu}\right)\cdot
		\frac{1}{(e^{x/\mu} + e^{-x/\mu})^2} \\
		&\hphantom{=}\cdot
		\left(\frac{1}{\mu}e^{x/\mu} - \frac{1}{\mu}e^{-x/\mu}\right)
		\\
		&= -\frac{1}{\mu^2} \frac{e^{x/\mu} - e^{-x/\mu}}{(e^{x/\mu} +
			e^{-x/\mu})^3}.
		\end{align*}
		The denominator of this final expression vanishes nowhere.
		Hence, the only critical point satisfies $x/\mu = -x/\mu$, which
		implies $x = 0$. Therefore it holds
		\begin{equation*}
		\frac{d}{dx} \tanh(x/\mu) \leq \frac{1}{\mu}\sech^2(0) =
		\frac{1}{\mu},
		\end{equation*}
		which shows that $\tanh(x/\mu)$ is $(1/\mu)$-lipschitz.
		
		Now let $\V x_1$ and $\V x_2$ be any two points of $\bb R^n$.
		Then one has
		\begin{align*}
		\norm{\nabla f(\V x_1) - \nabla f(\V x_2)} &= \left( \sum_i
		\left( \tanh(x_{1i}/\mu) - \tanh(x_{2i}/\mu) \right)^2
		\right)^{1/2} \\
		&= \left( \sum_i
		\left| \tanh(x_{1i}/\mu) - \tanh(x_{2i}/\mu) \right|^2
		\right)^{1/2} \\
		&\leq \left( \sum_i \frac{1}{\mu}\left| \frac{x_{1i}}{\mu} -
		\frac{x_{2i}}{\mu}\right|^2 \right)^{1/2}\\
		&= \frac{1}{\mu} \norm{\V x_1 - \V x_2},
		\end{align*}
		completing the proof.
	\end{proof}
	
	\begin{lemma}[Separable objective gradient bound] \label{gradupperlemsep}
		The separable objective gradient obeys
		\[
		\left\Vert \nabla_{\mb w}g(\mb w)\right\Vert \leq \sqrt{2n}
		\]
		\[
		\left\Vert \mathrm{grad}[f](\mathbf{q})\right\Vert \leq \sqrt{n}
		\]
	\end{lemma}
	\begin{proof}
		Recalling that the Euclidean gradient is given by $\nabla \fs(\mathbf{q})_{i}=\tanh\left(\frac{q_{i}}{\mu}\right)$ we use Jensen's inequality, convexity of the $L^2$ norm and the triangle inequality to obtain
		\[
		\left\Vert \nabla g_{s}(\mathbf{w})\right\Vert ^{2}\leq\left\Vert \nabla \fs(\mathbf{q})\right\Vert ^{2}+\left\vert \tanh\left(\frac{q_{n}}{\mu}\right)\right\vert ^{2}\frac{\left\Vert \mathbf{w}\right\Vert ^{2}}{q_{n}^{2}}\leq2n
		\]
		while 
		\[
		\left\Vert \mathrm{grad}[\fs](\mathbf{q})\right\Vert =\left\Vert (\mathbf{I}-qq^{\ast})\nabla \fs(\mathbf{q})\right\Vert \leq\left\Vert \nabla \fs(\mathbf{q})\right\Vert =\sqrt{n}
		\]
	\end{proof}
	
\end{lemma}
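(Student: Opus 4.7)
The plan is to exploit the separable, coordinate-wise structure of $\nabla \fs$: since $\fs(\V x) = \sum_i h_\mu(x_i)$ with $h_\mu(t) = \mu\log\cosh(t/\mu)$, the gradient takes the purely diagonal form $[\nabla \fs(\V x)]_i = \tanh(x_i/\mu)$, so its $i$-th component depends only on $x_i$. Any uniform Lipschitz bound on the scalar map $t \mapsto \tanh(t/\mu)$ therefore lifts immediately to a Euclidean Lipschitz bound on $\nabla \fs$ with the same constant, via Pythagorean aggregation across coordinates.

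First I would prove the scalar one-dimensional bound. Differentiating gives $\tfrac{d}{dt}\tanh(t/\mu) = \tfrac{1}{\mu}\sech^2(t/\mu)$, and since $\cosh(t) \geq 1$ for every real $t$, the hyperbolic secant squared is bounded above by $1$ everywhere, so the derivative is bounded in absolute value by $1/\mu$. The mean value theorem then yields $|\tanh(x_{1i}/\mu) - \tanh(x_{2i}/\mu)| \leq (1/\mu)\,|x_{1i} - x_{2i}|$ for every pair of real numbers.

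The second step is to square this coordinate-wise inequality, sum over $i \in [n]$, and take a square root:
\[
\|\nabla \fs(\V x_1) - \nabla \fs(\V x_2)\|^2 \;=\; \sum_{i=1}^n \bigl(\tanh(x_{1i}/\mu) - \tanh(x_{2i}/\mu)\bigr)^2 \;\leq\; \frac{1}{\mu^2}\,\|\V x_1 - \V x_2\|^2,
\]
which gives exactly the claimed estimate.

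There is no serious obstacle here; the argument is the textbook reduction of a Lipschitz property of a separable vector field to a Lipschitz property of its one-dimensional components. The only minor technical point is verifying $\sech^2 \leq 1$. The author handles this by an explicit second-derivative critical-point search ruling out interior maxima, but one can equivalently just observe $\sech(t) = 2/(e^t + e^{-t}) \leq 1$ via the AM--GM inequality $e^t + e^{-t} \geq 2$. Either route delivers the sharp scalar derivative bound $1/\mu$, which is attained at $t = 0$, so the constant $1/\mu$ in the lemma is tight.
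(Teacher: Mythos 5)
Your proposal is correct and follows essentially the same route as the paper: a scalar $1/\mu$-Lipschitz bound on $t \mapsto \tanh(t/\mu)$, lifted to $\reals^n$ coordinate-wise by squaring, summing, and taking a square root. The only difference is cosmetic — you justify $\sech^2(t/\mu) \leq 1$ directly from $\cosh \geq 1$, whereas the paper runs a second-derivative critical-point search to reach the same bound; your shortcut is the cleaner way to state it.
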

\section{Proofs - Dictionary Learning} \lbl{dlapp}

{\tiny }
\begin{proof}[\bf{Proof of Lemma \ref{neggraddllem}}:(Dictionary learning population gradient)] \label{neggraddllemp}
	%
	%
	For simplicity we consider the case $\mathrm{sign}(\mb w_i) = 1$. The converse follows by a similar argument. We have 
	\[
	\mathbf{u}^{(i)\ast}\mathrm{grad}[f_{DL}^{pop}](\mathbf{q}(\mathbf{w})) = 
	\]
	\begin{equation} \lbl{ugeq}
	\mathbb{E}_{\mb x}\left[\tanh\left(\frac{\mb q^{\ast}(\mb w) \mb x}{\mu}\right)\left(-x_{n}\frac{w_{i}}{q_{n}}+x_{i}\right)\right]
	\end{equation}
	
	Following the notation of \cite{sun2017complete}, we write $x_{j}=b_{j}v_{j}$ where
	$b_{j}\sim\mathrm{Bern}(\theta),v_{j}\sim\mathcal{N}(0,1)$ and denote
	the vectors of these variables by $\mathcal{J},v$ respectively. Defining
	$Y^{(n)}=\underset{j\neq n}{\sum} q(\mb w)_{j}x_{j},X^{(n)}=q_{n}v_{n}$, $Y$
	is Gaussian conditioned on a certain setting of $\mathcal{J}$. Using
	Lemma 40 in \cite{sun2017complete} the first term in \ref{ugeq} is 
	
	\[
	-\frac{w_{i}\theta}{q_{n}^{2}}\mathbb{E}_{ \mb v,\mathcal{J}|b_{n}=1}\left[\tanh\left(\frac{Y^{(n)}+X^{(n)}}{\mu}\right)X^{(n)}\right]
	\]
	\[
	=-\frac{w_{i}}{\mu}\theta\mathbb{E}_{ \mb v,\mathcal{J}|b_{n}=1}\left[\mathrm{sech}^{2}\left(\frac{Y^{(n)}+X^{(n)}}{\mu}\right)\right]
	\]
	
	
	and similarly the second term in \ref{ugeq} is, with $X^{(i)}=w_{i}v_{i},Y^{(i)}=\underset{j\neq i}{\sum} q(\mb w)_{j}x_{j}$
	
	\[
	\frac{\theta}{w_{i}}\mathbb{E}_{ \mb v,\mathcal{J}|b_{i}=1}\left[\tanh\left(\frac{Y^{(i)}+X^{(i)}}{\mu}\right)X^{(i)}\right]
	\]
	\[
	=\frac{w_{i}\theta}{\mu}\mathbb{E}_{ \mb v,\mathcal{J}|b_i=1}\left[\mathrm{sech}^{2}\left(\frac{\mb q^{\ast}(\mb w) \mb x}{\mu}\right)\right]
	\]
	
	if we now define $X=\underset{j\neq n,i}{\sum} q^{\ast}(\mb w)_{j}x_{j}$
	we have 
	
	\[
	\mathbf{u}^{(i)\ast}\mathrm{grad}[f_{DL}^{pop}](\mathbf{q}(\mathbf{w})) = 
	\]
	\[
	=\frac{w_{i}\theta}{\mu}\left(\begin{array}{c}
	\mathbb{E}_{ \mb v,\mathcal{J}|b_{i}=1}\left[\mathrm{sech}^{2}\left(\frac{\mb q^{\ast}(\mb w) \mb x}{\mu}\right)\right]\\
	-\mathbb{E}_{ \mb v,\mathcal{J}|b_{n}=1}\left[\mathrm{sech}^{2}\left(\frac{\mb q^{\ast}(\mb w) \mb x}{\mu}\right)\right]
	\end{array}\right)
	\]
	
	\[
	=\frac{w_{i}\theta}{\mu}\mathbb{E}_{ \mb v,\mathcal{J}}\left[\begin{array}{c}
	\mathrm{sech}^{2}\left(\frac{X+b_{n}q_{n}v_{n}+w_{i}v_{i}}{\mu}\right)\\
	-\mathrm{sech}^{2}\left(\frac{X+q_{n}v_{n}+w_{i}b_{i}v_{i}}{\mu}\right)
	\end{array}\right]
	\]

	\begin{equation} \lbl{ugradeq}
	=\frac{w_{i}\theta(1-\theta)}{\mu}\mathbb{E}_{ \mb v,\mathcal{J}\backslash\{n,i\}}\left[\begin{array}{c}
	\mathrm{sech}^{2}\left(\frac{X+w_{i}v_{i}}{\mu}\right)\\
	-\mathrm{sech}^{2}\left(\frac{X+q_{n}v_{n}}{\mu}\right)
	\end{array}\right]
	\end{equation}

	\subsection{Bounds for $\mathbb{E}\left[\mathrm{sech}^{2}(Y)\right]$}
	
	We already have a lower bound in Lemma 20 of \cite{sun2017complete} that we can use for the
	second term, so we need an upper bound for the first term. Following
	from p. 865, we define $Y\sim\mathcal{N}(0,\sigma_{Y}^{2})$ , $Z=\exp\left(\frac{-2Y}{\mu}\right)$,
	and defining $\beta=1-\frac{1}{\sqrt{T}}$ for some $T>1$ we have
	
	\[
	\mathrm{sech}^{2}(Y/\mu)=\frac{4Z}{(1+Z)^{2}}\leq\frac{4Z}{(1+\beta Z)^{2}}=\underset{k=0}{\overset{\infty}{\sum}}b_{k}Z^{k+1}
	\]
	
	Where $b_{k}=(-\beta)^{k}(k+1)$. Using B.3 from Lemma 40 in \cite{sun2017complete} we have 
	
	\[
	\mathbb{E}\left[\underset{k=0}{\overset{\infty}{\sum}}b_{k}Z^{k+1}\mathbbm{1}_{Y>0}\right]=\underset{k=0}{\overset{\infty}{\sum}}b_{k}\mathbb{E}\left[e^{-2(k+1)Y/\mu}\mathbbm{1}_{Y>0}\right]
	\]
	\[
	=\underset{k=0}{\overset{\infty}{\sum}}b_{k}\exp\left(\frac{1}{2}\left(\frac{2(k+1)}{\mu}\right)^{2}\sigma_{Y}^{2}\right)\Phi^{c}\left(\frac{2(k+1)}{\mu}\sigma_{Y}\right)
	\]
	
	Where $\Phi^{c}(x)$ is the complementary Gaussian CDF (The exchange
	of summation and expectation is justified since $Y>0$ implies $Z\in[0,1]$,
	see proof of Lemma 18 in \cite{sun2017complete} for details). Using the following bounds
	$\frac{1}{\sqrt{2\pi}}\left(\frac{1}{x}-\frac{1}{x^{3}}\right)e^{-x^{2}/2}\leq\Phi^{c}(x)\leq\frac{1}{\sqrt{2\pi}}\left(\frac{1}{x}-\frac{1}{x^{3}}+\frac{3}{x^{5}}\right)e^{-x^{2}/2}$
	by applying the upper (lower) bound to the even (odd) terms in the
	sum, and then adding a non-negative quantity, we obtain
	
	\[
	\leq\frac{1}{\sqrt{2\pi}}\underset{k=0}{\overset{\infty}{\sum}}(-\beta)^{k}(k+1)\left(\frac{1}{\frac{2(k+1)}{\mu}\sigma_{Y}}-\frac{1}{\left(\frac{2(k+1)}{\mu}\sigma_{Y}\right)^{3}}\right)
	\]
	\[
	+\frac{1}{\sqrt{2\pi}}\underset{k=0}{\overset{\infty}{\sum}}\beta^{k}(k+1)\left(\frac{3}{\left(\frac{2(k+1)}{\mu}\sigma_{Y}\right)^{5}}\right)
	\]
	
	and using $\underset{k=0}{\overset{\infty}{\sum}}(-\beta)^{k}=\frac{1}{1+\beta},\underset{k=0}{\overset{\infty}{\sum}}\frac{b_{k}}{(k+1)^{3}}\geq0,\underset{k=0}{\overset{\infty}{\sum}}\frac{\left\vert b_{k}\right\vert }{(k+1)^{5}}\leq2$
	(from Lemma 17 in \cite{sun2017complete}) and taking $T\rightarrow\infty$ so that $\beta\rightarrow1$
	we have 
	
	\[
	\underset{k=0}{\overset{\infty}{\sum}}b_{k}\mathbb{E}\left[Z^{k+1}\mathbbm{1}_{Y>0}\right]\leq\frac{1}{2\sqrt{2\pi}}\frac{1}{\frac{2}{\mu}\sigma_{Y}}+\frac{1}{\sqrt{2\pi}}\frac{6}{\left(\frac{2}{\mu}\sigma_{Y}\right)^{5}}
	\]
	
	giving the upper bound 
	
	\[
	\mathbb{E}\left[\mathrm{sech}^{2}(Y/\mu)\right]=\mathbb{E}\left[1-\tanh^{2}(Y/\mu)\right]\leq8\underset{k=0}{\overset{\infty}{\sum}}b_{k}\mathbb{E}\left[Z^{k+1}\mathbbm{1}_{Y>0}\right]
	\]
	\[
	\leq\sqrt{\frac{2}{\pi}}\frac{\mu}{\sigma_{Y}}+\frac{3\mu^{5}}{2\sqrt{2\pi}\sigma_{Y}^{5}}
	\]
	
	while the lower bound (Lemma 20 in \cite{sun2017complete}) is 
	\[
	\sqrt{\frac{2}{\pi}}\frac{\mu}{\sigma_{Y}}-\frac{2\mu^{3}}{\sqrt{2\pi}\sigma_{Y}^{3}}-\frac{3\mu^{5}}{2\sqrt{2\pi}\sigma_{Y}^{5}}\leq\mathbb{E}\left[\mathrm{sech}^{2}(Y)\right]
	\]

	\subsection{Gradient bounds}\label{dlgradlem}
	After conditioning on $\mathcal{J} \backslash \{n,i \}$ the variables $X+q_{n}v_{n},X+q_{i}v_{i}$ are Gaussian. We can thus plug the bounds into \ref{ugradeq} to obtain 
	
	\[
	\mathbf{u}^{(i)\ast}\mathrm{grad}[f_{DL}^{pop}](\mathbf{q}(\mathbf{w}))  \geq\sqrt{\frac{2}{\pi}}w_{i}\theta(1-\theta)
	\]
	\[
	*\mathbb{E}_{\mathcal{J}\backslash\{n,i\}}\left[\begin{array}{c}
	\frac{1}{\sqrt{\sigma_{X}^{2}+w_{i}^{2}}} - \frac{\mu^{2}}{\left(\sigma_{X}^{2}+w_{i}^{2}\right)^{3/2}} - \frac{3\mu^{4}}{4\left(\sigma_{X}^{2}+w_{i}^{2}\right)^{5/2}}\\
	-\frac{1}{\sqrt{\sigma_{X}^{2}+q_{n}^{2}}}-\frac{3\mu^{4}}{4\left(\sigma_{X}^{2}+q_{n}^{2}\right)^{5/2}}
	\end{array}\right]
	\]
	
	\[
	\geq\sqrt{\frac{2}{\pi}}w_{i}\theta(1-\theta)\left(\begin{array}{c}
	\mathbb{E}_{\mathcal{J}\backslash\{n,i\}}\left[\frac{\sqrt{\sigma_{X}^{2}+q_{n}^{2}}-\sqrt{\sigma_{X}^{2}+w_{i}^{2}}}{\sqrt{\sigma_{X}^{2}+q_{n}^{2}}\sqrt{\sigma_{X}^{2}+w_{i}^{2}}}\right]\\
	-\frac{\mu^{2}}{w_{i}^{3}}-\frac{3\mu^{4}}{2w_{i}^{5}}
	\end{array}\right)
	\]
	
	the term in the expectation is positive since $q_{n}>||w||_{\infty}\left(1+\zeta\right)>w_{i}$
	giving
	
	\[
	\geq\sqrt{\frac{2}{\pi}}w_{i}\theta(1-\theta)\left(\begin{array}{c}
	\mathbb{E}_{\mathcal{J}\backslash\{n,i\}}\left[\begin{array}{c}
	\sqrt{\sigma_{X}^{2}+q_{n}^{2}}\\
	-\sqrt{\sigma_{X}^{2}+w_{i}^{2}}
	\end{array}\right]\\
	-\frac{\mu^{2}}{w_{i}^{3}}-\frac{3\mu^{4}}{2w_{i}^{5}}
	\end{array}\right)
	\]
	
	. To extract the $\zeta$ dependence we plug in $q_{n}>w_{i}\left(1+\zeta\right)$ and develop to first order in $\zeta$ (since the resulting function of $\zeta$ is convex) giving
	
	\[
	\geq\sqrt{\frac{2}{\pi}}w_{i}\theta(1-\theta)\left(\begin{array}{c}
	\mathbb{E}_{\mathcal{J}\backslash\{n,i\}}\left[\frac{w_{i}^{2}\zeta}{\sqrt{\sigma_{X}^{2}+w_{i}^{2}}}\right]\\
	-\frac{\mu^{2}}{w_{i}^{3}}-\frac{3\mu^{4}}{2w_{i}^{5}}
	\end{array}\right)
	\]
	
	\[
	\geq\sqrt{\frac{2}{\pi}}\theta(1-\theta)\left(w_{i}^{3}\zeta-\frac{\mu^{2}}{w_{i}^{2}}-\frac{3\mu^{4}}{2w_{i}^{4}}\right)
	\]
	
	Given some $\zeta$ and $r$ such that $w_{i}>r$, if we now choose $\mu$ such that $\mu<\sqrt{\frac{\sqrt{1+\frac{3}{4}r^{3}\zeta}-1}{3}}r$ we have the desired result. This can be achieved by requiring $\mu<\cdlmu r^{5/2}\sqrt{\zeta}$ for a suitably chosen $\cdlmu > 0$.
	
	%
	%
	%
	%
	%
	%
	%
	
\end{proof}

\begin{lemma}[Point-wise concentration of projected gradient] \label{conclem}
	For $\mathbf{u}^{(i)}$ defined in \ref{ueq}, the gradient of the objective \ref{dl2eq} obeys
	\[
	\mathbb{P}\left[\left\vert \mathbf{u}^{(i)\ast}\mathrm{grad}[f_{DL}](\mathbf{q})-\mathbb{E}\left[\mathbf{u}^{(i)\ast}\mathrm{grad}[f_{DL}](\mathbf{q})\right]\right\vert \geq t\right]
	\]
	\[
	\leq2\exp\left(-\frac{pt^{2}}{4+2\sqrt{2}t}\right)
	\]
\end{lemma}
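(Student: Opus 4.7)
The plan is to realize $\mathbf{u}^{(i)*}\mathrm{grad}[f_{DL}](\mathbf{q})$ as an empirical mean of $p$ i.i.d.\ scalar random variables and then apply a standard Bernstein inequality for sums of sub-exponential variables. First, note that $\mathbf{u}^{(i)}$ defined in \eqref{ueq} is tangent to the sphere at $\mathbf{q}(\mathbf{w})$, since
\[
\mathbf{u}^{(i)*}\mathbf{q} \;=\; \mathrm{sign}(w_i)\, w_i - \frac{|w_i|}{q_n}\, q_n \;=\; 0.
\]
Therefore the projector $(\mathbf{I}-\mathbf{q}\mathbf{q}^{*})$ in the Riemannian gradient has no effect when contracted against $\mathbf{u}^{(i)}$, and
\[
\mathbf{u}^{(i)*}\mathrm{grad}[f_{DL}](\mathbf{q}) \;=\; \mathbf{u}^{(i)*}\nabla f_{DL}(\mathbf{q}) \;=\; \frac{1}{p}\sum_{k=1}^{p} Z_k, \qquad Z_k \;:=\; \tanh\!\Bigl(\tfrac{\mathbf{q}^{*}\mathbf{y}_k}{\mu}\Bigr)\,\mathbf{u}^{(i)*}\mathbf{y}_k,
\]
so the $Z_k$ are i.i.d.\ across $k$ and the claim becomes a concentration statement for their empirical mean.

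Second, I would control the variance and a sub-exponential norm of each $Z_k$ uniformly in $k$. Since $|\tanh|\le 1$, we have $|Z_k|\le|\mathbf{u}^{(i)*}\mathbf{y}_k|$. Because $\mathbf{u}^{(i)}$ is supported on only the two coordinates $i$ and $n$, the scalar $\mathbf{u}^{(i)*}\mathbf{y}_k$ is a weighted sum of two independent Bernoulli--Gaussian entries, and
\[
\|\mathbf{u}^{(i)}\|^2 \;=\; 1 + \tfrac{w_i^2}{q_n^2} \;\le\; 2.
\]
This immediately gives $\mathrm{Var}(Z_k)\le\theta\,\|\mathbf{u}^{(i)}\|^2 \le 2$, and, combining the Gaussian tail of $\mathbf{u}^{(i)*}\mathbf{y}_k$ conditional on the Bernoulli mask with the boundedness of $\tanh$, one obtains $\|Z_k\|_{\psi_1}\le\sqrt{2}$.

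Third, I would invoke the classical Bernstein inequality for sums of i.i.d.\ centered sub-exponential random variables in the form
\[
\mathbb{P}\!\left[\, \Bigl|\tfrac{1}{p}\sum_{k=1}^{p}(Z_k - \mathbb{E} Z_k)\Bigr| \ge t \right] \;\le\; 2\exp\!\left(-\,\frac{p\,t^2}{2\sigma^2 + 2K\,t}\right),
\]
with $\sigma^2 = 2$ and $K = \sqrt{2}$, which recovers exactly the claimed $2\exp\!\bigl(-p t^2/(4+2\sqrt{2}\,t)\bigr)$. The conceptual content is minor; the main obstacle is simply to propagate the single bound $\|\mathbf{u}^{(i)}\|^2\le 2$ consistently through both the variance estimate and the Orlicz-norm estimate so that the constants $4$ and $2\sqrt{2}$ in the denominator fall out cleanly from the Bernstein template, without paying a larger constant from any slack in the subgaussian-to-subexponential conversion.
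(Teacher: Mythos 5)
Your proposal is correct and follows essentially the same route as the paper: write the projection as an empirical mean of i.i.d.\ scalars $Z_k$, dominate $|Z_k|$ by $|\mathbf{u}^{(i)*}\mathbf{x}_k|$ using $|\tanh|\le 1$ and $\|\mathbf{u}^{(i)}\|^2\le 2$, and apply a Bernstein inequality with $\sigma^2=2$, $R=\sqrt{2}$. The only cosmetic difference is that the paper certifies the Bernstein condition through explicit moment bounds $\mathbb{E}[|Z_k|^m]\le 2\sqrt{2}^{\,m-2}\,m!/2$ (via Lemmas 21 and 36 of \cite{sun2017complete}) rather than a $\psi_1$-norm estimate, which is exactly the "no slack in the conversion" issue you flag; the moment route is the one that yields the constants $4$ and $2\sqrt{2}$ exactly.
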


\begin{proof}[\bf{Proof of Lemma \ref{conclem}: (Point-wise concentration of projected gradient)}] \label{conclemp}
	If we denote by $\mb x^{i}$ a column of the data matrix with entries
	$x_{j}^{i}\sim BG(\theta)$, we have 
	\[
	\mathbf{u}^{(i)\ast}\mathrm{grad}[f_{DL}](\mathbf{q}(\mathbf{w})) 
	\]
	\[
	=\frac{1}{p}\underset{k=1}{\overset{p}{\sum}}\tanh\left(\frac{\mb q^{\ast}(\mb w) \mb x^{k}}{\mu}\right)\left(x^k_i - x_{n}^{k}\frac{ w_{i}}{q_{n}}\right)\equiv\frac{1}{p}\underset{k=1}{\overset{p}{\sum}}Z_{k}
	\]
	. Since $\tanh(x)$ is bounded by 1, 
	\[
	\left\vert Z_{k}\right\vert \leq\left\vert \left(x^k_i - x_{n}^{k}\frac{ w_{i}}{q_{n}}\right)\right\vert \equiv \left\vert u^{T}x^{k}\right\vert 
	\]
	
	. Invoking Lemma 21 from  \cite{sun2017complete} and $\left\Vert u\right\Vert^2 =1+\frac{ w_{i}^{2}}{q_{n}^{2}}\leq2$
	we obtain
	\[
	\mathbb{E}\left[\left\vert Z_{k}\right\vert ^{m}\right]\leq\mathbb{E}_{Z\sim\mathcal{N}(0,2)}\left[\left\vert Z\right\vert ^{m}\right]\leq\sqrt{2}^{m}(m-1)!!
	\]
	\[
	\leq2\sqrt{2}^{m-2}\frac{m!}{2}
	\]
	and using Lemma 36 in \cite{sun2017complete} with $R=\sqrt{2},\sigma=\sqrt{2}$ we have 
	
	\[
	\mathbb{P}\left[\left\vert \nabla g_{DL}(\mb w)_i-\mathbb{E}\left[\nabla g_{DL}(\mb w)_i\right]\right\vert \geq t\right]
	\]
	\[
	\leq2\exp\left(-\frac{pt^{2}}{4+2\sqrt{2}t}\right)
	\]
	
\end{proof}

\begin{lemma} [Projection Lipschitz Constant] \label{liplemma}
	The Lipschitz constant for $\mathbf{u}^{(i)\ast}\mathrm{grad}[f_{DL}](\mathbf{q}(\mathbf{w}))$ is 
	\[
	L=2\sqrt{n}\left\Vert \mb X\right\Vert _{\infty}\left(\frac{\left\Vert \mb X\right\Vert _{\infty}}{\mu}+1\right)
	\]
\end{lemma}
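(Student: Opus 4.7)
The plan is to first simplify the expression for $F(\mb w) := \mathbf{u}^{(i)\ast}\mathrm{grad}[f_{\mathrm{DL}}](\mathbf{q}(\mathbf{w}))$, then bound $\|\nabla_{\mb w} F(\mb w)\|$ by a product-rule argument. The key initial observation is that $\mathbf{u}^{(i)}$, by construction, is tangent to the sphere at $\mathbf{q}(\mathbf{w})$: direct computation gives $\mathbf{u}^{(i)\ast}\mathbf{q}(\mathbf{w}) = \mathrm{sign}(w_i)\,w_i - |w_i| = 0$. Consequently, in $\mathrm{grad}[f_{\mathrm{DL}}](\mathbf{q}) = (\mathbf{I}-\mathbf{q}\mathbf{q}^{\ast})\nabla f_{\mathrm{DL}}(\mathbf{q})$, the rank-one projector disappears upon contraction with $\mathbf{u}^{(i)}$, producing the clean form
\[
F(\mb w) \;=\; \frac{1}{p}\sum_{k=1}^p \tanh\!\left(\frac{\mathbf{q}(\mathbf{w})^{\ast}\mathbf{x}^{k}}{\mu}\right)\mathbf{u}^{(i)\ast}\mathbf{x}^{k}.
\]

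To extract the Lipschitz constant I would bound $\|\nabla_{\mb w} F(\mb w)\|$ by applying the product rule to each summand. This yields two contributions: (a) the derivative falls on the $\tanh$ factor, yielding $\mathrm{sech}^{2}(\cdot)\,(1/\mu)\,\nabla_{\mb w}(\mathbf{q}^{\ast}\mathbf{x}^k)\cdot \mathbf{u}^{(i)\ast}\mathbf{x}^k$; and (b) the derivative falls on the projection $\mathbf{u}^{(i)\ast}\mathbf{x}^k = \mathrm{sign}(w_i)\,x_i^k - (|w_i|/q_n)\,x_n^k$, which depends on $\mb w$ only through the ratio $|w_i|/q_n(\mb w)$. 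For (a) I would use $|\mathrm{sech}^{2}|\leq 1$, the straightforward bound $|\mathbf{u}^{(i)\ast}\mathbf{x}^k| \leq (1 + |w_i|/q_n)\|\mathbf{X}\|_\infty$, together with $\partial_{w_j}(\mathbf{q}^{\ast}\mathbf{x}^k) = x_j^k - (w_j/q_n)\,x_n^k$ to conclude $\|\nabla_{\mb w}(\mathbf{q}^{\ast}\mathbf{x}^k)\| \lesssim \sqrt{n}\,(1 + \|\mb w\|/q_n)\,\|\mathbf{X}\|_\infty$. For (b) I would compute $\nabla_{\mb w}(|w_i|/q_n) = (\mathrm{sign}(w_i)/q_n)\,\mathbf{e}_i + (|w_i|/q_n^3)\,\mb w$ explicitly and bound its norm term-by-term.

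Combining the two contributions, $|\tanh| \leq 1$ makes (b) contribute the constant $1$ factor in $(\|\mathbf{X}\|_\infty/\mu + 1)$, while the $1/\mu$ from differentiating $\tanh$ together with the extra factor of $\|\mathbf{X}\|_\infty$ coming from $|\mathbf{u}^{(i)\ast}\mathbf{x}^k|$ in (a) yields the dominant term $\|\mathbf{X}\|_\infty/\mu$. The common factor $2\sqrt{n}\,\|\mathbf{X}\|_\infty$ is inherited from the dimensional dependence in $\|\nabla_{\mb w}(\mathbf{q}^{\ast}\mathbf{x}^k)\|$ and a parallel bound on $\|\nabla_{\mb w}(|w_i|/q_n)\|$. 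Since the derived bound on each summand's gradient is uniform in $k$, the same bound applies to the sample-averaged $F$.

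The main obstacle I anticipate is controlling the ratios $|w_i|/q_n$ and $\|\mb w\|/q_n$ arising from both the reparametrization $\mathbf{q}(\mathbf{w})$ and the definition of $\mathbf{u}^{(i)}$. These quantities can blow up near the equator $q_n \to 0$, so the Lipschitz bound must be taken over a region where $q_n$ is bounded below — naturally the set $\mathcal{C}$ (or a neighborhood thereof), on which $q_n \geq 1/\sqrt{n}$ and $|w_i|/q_n \leq 1$ produce exactly the $\sqrt{n}$ scaling in the stated constant. The remaining work is the careful bookkeeping of numerical constants so that the two contributions combine into the single clean form $L = 2\sqrt{n}\,\|\mathbf{X}\|_\infty\bigl(\|\mathbf{X}\|_\infty/\mu + 1\bigr)$; this is tedious but routine once the two bounding pieces above are in hand.
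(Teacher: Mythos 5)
Your proposal is correct and follows essentially the same route as the paper: both exploit the product structure $\tanh(\mb q(\mb w)^{\ast}\mb x^{k}/\mu)\cdot\mb u^{(i)\ast}\mb x^{k}$ (the projector dropping out since $\mb u^{(i)\ast}\mb q=0$), control the $\tanh$ factor through its $1/\mu$-Lipschitzness composed with the $O(\sqrt{n})$-Lipschitz map $\mb w\mapsto\mb q(\mb w)^{\ast}\mb x^{k}$, and control the $|w_i|/q_n$ factor using the lower bound on $q_n$ over $\mathcal{C}$, with the two contributions producing the $\|\mb X\|_\infty/\mu$ and $1$ terms respectively. The only difference is that the paper bounds finite differences directly via the product-of-bounded-Lipschitz-functions lemmas of Sun--Qu--Wright, whereas you bound $\|\nabla_{\mb w}F\|$ by the product rule and would invoke the mean value theorem (legitimate, since $\mathcal{C}$ is convex in $\mb w$-coordinates); the loose constant bookkeeping you defer at the end is no looser than the paper's own.
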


\begin{proof}[\bf{Proof of Lemma \ref{liplemma}:} (Projection Lipschitz Constant)] \label{lipproof}
	We have 
	\[
	|\mathbf{u}^{(j)\ast}\mathrm{grad}[f_{DL}](\mathbf{q}(\mathbf{w}))-\mathbf{u}^{(j)\ast}\mathrm{grad}[f_{DL}](\mathbf{q}(\mathbf{w'}))|
	\]
	\[
	=\left\vert \frac{1}{p}\underset{i=1}{\overset{p}{\sum}}\left[\begin{array}{c}
	\mathrm{tanh}(\frac{\mb q^{\ast}(\mb w)\mb x^{i}}{\mu})\left(x^{i}_j-\frac{x_{n}^{i}}{q_{n}(\mb w)}w_j\right)\\
	-\mathrm{tanh}(\frac{\mb q^{\ast}(\mb w')\mb x^{i}}{\mu})\left(x^{i}_j-\frac{x_{n}^{i}}{q_{n}(\mb w')}w'_j\right)
	\end{array}\right]\right\vert 
	\]
	
	\[
	\equiv\left\vert \frac{1}{p}\underset{i=1}{\overset{p}{\sum}}\left[\mathrm{tanh}(\frac{\mb q^{\ast}(\mb w)\mb x^{i}}{\mu})s(\mb w)-\mathrm{tanh}(\frac{\mb q^{\ast}(\mb w')\mb x^{i}}{\mu})s(\mb w')\right]\right\vert 
	\]
	
	where we have defined $s(\mb w)=x_j^i-\frac{x_{n}}{q_{n}(\mb w)}w_j$.
	Using $\mb q(\mb w),\mb q(\mb w')\in C \Rightarrow q_{n}(\mb w),q_{n}(\mb w')\geq\frac{1}{2\sqrt{n}}$
	we have 
	
	\[
	\left\vert s(\mb w)-s(\mb w')\right\vert =\left\vert x_{n}^i\right\vert \left\vert \frac{w_j}{q_{n}(\mb w)}-\frac{w'_j}{q_{n}(\mb w')}\right\Vert 
	\]
	\[
	\leq\left\vert x_{n}\right\vert 2\sqrt{n}\left\Vert \mb w- \mb w'\right\Vert 
	\]
	
	Lemma 25 in \cite{sun2017complete} gives 
	
	\[
	\left\vert \mathrm{tanh}(\frac{\mb q^{\ast}(\mb w)\mb x}{\mu})-\mathrm{tanh}(\frac{\mb q^{\ast}(\mb w')\mb x}{\mu})\right\vert \leq\frac{2\sqrt{n}}{\mu}\left\Vert x\right\Vert \left\Vert \mb w-\mb w'\right\Vert 
	\]
	
	We also use the fact that $\mathrm{tanh}$ is bounded by 1 and $s(\mb w)$ is bounded
	by $\left\Vert \mb X\right\Vert _{\infty}$. We can then use Lemma 23 in \cite{sun2017complete}
	to obtain 
	
	\[
	|\mathbf{u}^{(j)\ast}\mathrm{grad}[f_{DL}](\mathbf{q}(\mathbf{w}))-\mathbf{u}^{(j)\ast}\mathrm{grad}[f_{DL}](\mathbf{q}(\mathbf{w'}))|
	\]
	\[
	\leq\frac{2\sqrt{n}}{p}\underset{i=1}{\overset{p}{\sum}}(\frac{1}{\mu}\left\Vert x^{i}\right\Vert_\infty ^{2} +\left\Vert x^{i}\right\Vert_\infty ) \left\Vert \mb w- \mb w'\right\Vert 
	\]
	
	\[
	\leq2\sqrt{n}\left\Vert \mb X\right\Vert _{\infty}\left(\frac{\left\Vert \mb X\right\Vert _{\infty}}{\mu}+1\right)\left\Vert \mb w- \mb w'\right\Vert 
	\]
	
	we thus have $L=2\sqrt{n}\left\Vert \mb X\right\Vert _{\infty}\left(\frac{\left\Vert \mb X\right\Vert _{\infty}}{\mu}+1\right)$. 
\end{proof}

\begin{lemma}[Uniformized gradient fluctuations] \label{uniflem} %
	For all $\mathbf{w}\in \mathcal{C}_{\zeta},i\in[n],$
	with probability $\mathbb{P}>\mathbb{P}_{y}$
	
	we have 
	\[
	\left\vert \begin{array}{c}
	\mathbf{u}^{(i)\ast}\mathrm{grad}[f_{DL}](\mathbf{q}(\mathbf{w}))\\
	-\mathbb{E}\left[\mathbf{u}^{(i)\ast}\mathrm{grad}[f_{DL}](\mathbf{q}(\mathbf{w}))\right]
	\end{array}\right\vert \leq y(\theta,\zeta)
	\]
	where 
	\[
	\mathbb{P}_{y}\equiv2\exp\left(\begin{array}{c}
	-\frac{1}{4}\frac{py(\theta,\zeta)^{2}}{4+\sqrt{2}y(\theta,\zeta)}+\log(n)\\
	+n\log\left(\frac{48\sqrt{n}\left(\frac{4\log(np)}{\mu}+\sqrt{\log(np)}\right)}{y(\theta,\zeta)}\right)
	\end{array}\right)
	\]
\end{lemma}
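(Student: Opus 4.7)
The plan is to apply a standard $\epsilon$-net and union-bound argument, combining the pointwise concentration of Lemma \ref{conclem} with the Lipschitz bound of Lemma \ref{liplemma} to upgrade the pointwise estimate into a uniform one over all $\mathbf{w} \in \mathcal{C}_\zeta$ and $i \in [n]$.

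First, I would replace the random Lipschitz constant of Lemma \ref{liplemma}, which depends on $\|\mathbf{X}_0\|_\infty$, by a deterministic high-probability upper bound. Since each entry $\mathbf{V}_{ij}\mathbf{\Omega}_{ij}$ is sub-Gaussian, a union bound over the $np$ entries combined with a Gaussian tail estimate yields $\|\mathbf{X}_0\|_\infty \leq C\sqrt{\log(np)}$ with probability at least $1 - c\,p^{-6}$; this failure is absorbed into the $c_8 p^{-6}$ term of Theorem \ref{dlratethm} rather than into $\mathbb{P}_y$ itself. On this high-probability event, Lemma \ref{liplemma} provides a deterministic Lipschitz constant
\[
L \;\leq\; C'\sqrt{n}\left(\tfrac{4\log(np)}{\mu} + \sqrt{\log(np)}\right)
\]
for the map $\mathbf{w}\mapsto \mathbf{u}^{(i)\ast}\mathrm{grad}[f_{DL}](\mathbf{q}(\mathbf{w}))$, which is precisely the quantity that must appear inside the logarithm of the claimed bound.

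Next, I would cover $\mathcal{C}_\zeta \subset B_1(\mathbf{0}) \subset \mathbb{R}^{n-1}$ by an $\epsilon$-net $\mathcal{N}_\epsilon$ of cardinality at most $(3/\epsilon)^n$ using the standard volume comparison. Applying Lemma \ref{conclem} at each pair $(\mathbf{w}, i) \in \mathcal{N}_\epsilon \times [n]$ with threshold $t = y/2$ — a choice that produces exactly the factor $\tfrac{1}{4}\tfrac{p y^2}{4+\sqrt{2}y}$ in the exponent of $\mathbb{P}_y$ — and taking a union bound over the $n|\mathcal{N}_\epsilon|$ pairs gives
\[
\mathbb{P}\Big[ \exists\, \mathbf{w}\in\mathcal{N}_\epsilon,\, i\in[n] : \big|\mathbf{u}^{(i)\ast}\mathrm{grad}[f_{DL}](\mathbf{q}(\mathbf{w})) - \mathbb{E}[\cdot]\big| > \tfrac{y}{2} \Big] \leq 2 n \left(\tfrac{3}{\epsilon}\right)^n \exp\!\left(-\tfrac{1}{4}\tfrac{py^2}{4 + \sqrt{2}y}\right).
\]
Setting $\epsilon$ to be a constant multiple of $y/L$ ensures that for each $\mathbf{w} \in \mathcal{C}_\zeta$ the Lipschitz extension from its nearest net point contributes at most $y/2$ error to both the random quantity and its expectation (the latter is also $L$-Lipschitz by passing the gradient through the expectation), so the triangle inequality gives a uniform deviation bound of $y$ on all of $\mathcal{C}_\zeta \times [n]$. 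Collecting the logarithmic contributions — $\log n$ from the index set, $n\log(3/\epsilon)$ from the covering, and $-\tfrac{1}{4}\tfrac{py^2}{4+\sqrt{2}y}$ from the tail — reproduces the stated $\mathbb{P}_y$.

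The main obstacle is purely bookkeeping: aligning the absolute constants in the covering-number bound, the Lipschitz constant, and the multiplier in $\epsilon = y/(cL)$ so that the final expression inside the logarithm matches $48\sqrt{n}\,(4\log(np)/\mu + \sqrt{\log(np)})/y$. All the underlying probabilistic estimates (pointwise tail, Lipschitz constant, and sub-Gaussian maxima of the $\mathbf{X}_0$ entries) are already in place, so no genuinely new estimate is required beyond the elementary net construction and a single union bound.
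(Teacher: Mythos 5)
Your proposal is correct and follows essentially the same route as the paper's proof: conditioning on the event $\left\Vert \mb X\right\Vert_{\infty}\leq 4\sqrt{\log(np)}$ to make the Lipschitz constant of Lemma \ref{liplemma} deterministic, covering $\mathcal{C}_{\zeta}$ with an $\varepsilon$-net of size $(3/\varepsilon)^{n}$ with $\varepsilon = y/(2L)$, applying Lemma \ref{conclem} at threshold $t=y/2$, and union-bounding over the $n\abs{N}$ pairs. The paper likewise pushes the failure probability of the $\left\Vert \mb X\right\Vert_{\infty}$ event into the theorem-level union bound rather than into $\mathbb{P}_{y}$, so even that bookkeeping choice matches.
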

Proof: \ref{uniflemp}

\begin{proof}[\bf{Proof of Lemma \ref{uniflem}:(Uniformized gradient fluctuations)}] \label{uniflemp}
	%
	%
	
	For $\mathbf{X}\in\mathbb{R}^{n\times p}$ with i.i.d. $BG(\theta)$
	entries, we define the event $\mathcal{E}_{\infty}\equiv\{1\leq\left\Vert \mathbf{X}\right\Vert _{\infty}\leq4\sqrt{\log(np)}\}$.
	We have 
	\[
	\mathbb{P}[\mathcal{E}_{\infty}^{c}]\leq\theta(np)^{-7}+e^{-0.3\theta np}
	\]
	For any $\varepsilon\in(0,1)$ we can construct an $\varepsilon$-net
	$N$ for $\mathcal{C}_{\zeta}\backslash B_{1/20\sqrt{5(n-1)}}^{2}(0)$ with
	at most $(3/\varepsilon)^{n}$ points. Using Lemma \ref{liplemma},
	on $\mathcal{E}_{\infty}$, $\mathrm{grad} [f_{DL}](\mathbf{q})_{i}$ is $L$-Lipschitz
	with 
	\[
	L=8\sqrt{n}\left(\frac{4\log(np)}{\mu}+\sqrt{\log(np)}\right)
	\]
	. If we choose $\varepsilon=\frac{y(\theta,\zeta)}{2L}$ we have 
	\[
	\left\vert N\right\vert \leq(\frac{6L}{y(\theta,\zeta)})^{n}
	\]
	. We then denote by $\mathcal{E}_{g}$ the event 
	\[
	\underset{\mathbf{w}\in N,i\in[n]}{\max}\left\vert \begin{array}{c}
	\mathbf{u}^{(i)\ast}\mathrm{grad}[f_{DL}](\mathbf{q}(\mathbf{w}))\\
	-\mathbb{E}\left[\mathbf{u}^{(i)\ast}\mathrm{grad}[f_{DL}](\mathbf{q}(\mathbf{w}))\right]
	\end{array}\right\vert \leq\frac{y(\theta,\zeta)}{2}
	\]
	and obtain that on $\mathcal{E}_{g}\cap\mathcal{E}_{\infty}$ 
	\[
	\underset{\mathbf{w}\in \mathcal{C}_{\zeta},i\in[n]}{\sup}\left\vert \nabla g_{DL}(\mathbf{w})_{i}-\mathbb{E}\left[\nabla g_{DL}(\mathbf{w})_{i}\right]\right\vert \leq y(\theta,\zeta)
	\]
	
	. Setting $t=\frac{b(\theta)}{2}$ in the result of Lemma \ref{conclem}
	gives that for all $\mathbf{w}\in \mathcal{C}_{\zeta},i\in[n]$, 
	
	\[
	\mathbb{P}\left[\left\vert \begin{array}{c}
	\mathbf{u}^{(i)\ast}\mathrm{grad}[f_{DL}](\mathbf{q}(\mathbf{w}))\\
	-\mathbb{E}\left[\mathbf{u}^{(i)\ast}\mathrm{grad}[f_{DL}](\mathbf{q}(\mathbf{w}))\right]
	\end{array}\right\vert  \geq\frac{y(\theta,\zeta)}{2}\right]
	\]
	\[
	\leq2\exp\left(-\frac{1}{4}\frac{py(\theta,\zeta)^{2}}{4+2\sqrt{2}y(\theta,\zeta)}\right)
	\]
	
	and thus 
	
	\[
	\mathbb{P}\left[\mathcal{E}_{g}^{c}\right]\leq2\exp\left(\begin{array}{c}
	-\frac{1}{4}\frac{py(\theta,\zeta)^{2}}{4+\sqrt{2}y(\theta,\zeta)^{2}}\\
	+n\log\left(\frac{6L}{b(\theta)}\right)+\log(n)
	\end{array}\right)
	\]
	
\end{proof}

\begin{lemma}[Gradient descent convergence rate for dictionary learning - population] \label{newratedllem}
	For any $1 > \zeta_0 > 0$ and $s > \frac{\mu}{4\sqrt{2}}$, Riemannian gradient descent with step size $\eta<\frac{\cetadlpop s}{n}$ 
	on the dictionary learning population objective \ref{dleq} with $\mu < \frac{\cmudlpop \sqrt{\zeta_0}}{n^{5/4}}, \theta \in (0,\frac{1}{2}) $
	, enters a ball of radius $\cdistfrommin s$ from a target solution in 
	\[
	T<\frac{ \Cdlpop}{\eta\theta}\left(\frac{1}{s} +n\log\frac{1}{\zeta_{0}}\right)
	%
	\]
	iterations with probability 
	\[
	\mathbb{P} \geq 1- 2 \log(n) \zeta_0
	\]
	where the $c_i,C_i$ are positive constants. 
	
\end{lemma}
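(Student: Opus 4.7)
The plan is to mirror the two-phase analysis from the proof of Theorem \ref{newratelem}, replacing Lemma \ref{neggradlem} with its dictionary-learning analogue Lemma \ref{neggraddllem} and using the geometric properties of $f_{\mathrm{DL}}^{\mathrm{pop}}$ established in \cite{sun2017complete} to handle the large-gradient region near the target minimizer $\widehat{\mb e}_n$. Two phases: (i) drive $\zeta^{(t)}=q_n^{(t)}/\|\mb w^{(t)}\|_\infty - 1$ from $\zeta_0$ up to $1$ by showing it grows geometrically per iteration; (ii) once inside $\mathcal{C}_1$, run standard descent using a uniform squared-gradient lower bound.

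For Phase 1 ($\mathcal{C}_{\zeta_0}\setminus\mathcal{C}_1$), Lemma \ref{czlemma} gives $\|\mb w\|_{\infty}\gtrsim 1/\sqrt{n}$, so $r\sim 1/\sqrt{n}$ is the natural scale for invoking Lemma \ref{neggraddllem}; the hypothesis $\mu<\cdlmu r^{5/2}\sqrt{\zeta}$ then reduces (using $\zeta\geq\zeta_0$) to $\mu<\cmudlpop\sqrt{\zeta_0}/n^{5/4}$, explaining the stated constraint on $\mu$. Applying Lemma \ref{neggraddllem} along $\mb u^{(i)}$ for a coordinate achieving $|w_i|=\|\mb w\|_{\infty}$ gives
\[
\mb u^{(i)\ast}\mathrm{grad}[f_{\mathrm{DL}}^{\mathrm{pop}}](\mb q(\mb w))\,\gtrsim\,\cdl\,\zeta/n^{3/2}.
\]
I would then convert this directional lower bound into a multiplicative update on $\zeta$ via the same Lemma \ref{zetalem}-style argument used for the separable case; the step-size bound $\eta<\cetadlpop s/n$ ensures the second-order error from the exponential map is controlled, yielding $\zeta^{(t+1)}\geq\zeta^{(t)}(1+\tilde c\,\eta\,\cdl/n)$ for some $\tilde c>0$. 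Unrolling, the iterate enters $\mathcal{C}_1$ in $T_1\lesssim n\log(1/\zeta_0)/(\eta\cdl)$ steps, with $\cdl\asymp\theta$ producing the $n\log(1/\zeta_0)$ summand.

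For Phase 2 ($\mathcal{C}_1\setminus B_{\cdistfrommin s}(\widehat{\mb e}_n)$), Lemma \ref{neggraddllem} evaluated at $\zeta\geq 1$ with $|w_i|\geq s$ yields a single-coordinate projected-gradient lower bound of order $\cdl s^3$, which together with the global geometric decomposition of $f_{\mathrm{DL}}^{\mathrm{pop}}$ from \cite{sun2017complete} upgrades to $\|\mathrm{grad}[f_{\mathrm{DL}}^{\mathrm{pop}}](\mb q)\|^{2}\gtrsim \theta s$ on the entire complement of the target ball. The step-size constraint $\eta<\cetadlpop s/n$ is below $1/(2L)$ for the gradient-Lipschitz constant $L$ of $f_{\mathrm{DL}}^{\mathrm{pop}}$ (established as in Lemma \ref{lipproof}), so Lemma \ref{riemgdlem} applies and telescoping the squared-gradient bound gives $(T-T_1)\theta s\lesssim 1/\eta$, producing the $1/(\eta\theta s)$ summand. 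Summing the two phases yields the claimed bound on $T$. For the probability, the domain decomposes into $2n$ symmetric regions $\mathcal{C}_{\zeta_0}$ indexed by the target signed basis vectors, and Lemma \ref{vollem} together with a union bound shows that a uniformly random initialization lands in $\bigcup_{\mb \alpha\in\breve{A}}\mathcal{C}_{\zeta_0}$ with probability at least $1-2\log(n)\zeta_0$.

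The main technical obstacle is Phase 1: controlling the $\zeta^{(t+1)}/\zeta^{(t)}$ increment when the argmax coordinate of $\mb w^{(t)}$ may change along the trajectory, and threading the single constraint $\mu<\cmudlpop\sqrt{\zeta_0}/n^{5/4}$ through every step so that the hypothesis $\mu<\cdlmu r^{5/2}\sqrt{\zeta}$ of Lemma \ref{neggraddllem} holds uniformly (this is harder than in the separable case, where Lemma \ref{seplemma} provides an explicit description of the flow). A secondary subtlety is promoting the single-direction bound $\mb u^{(i)\ast}\mathrm{grad}\gtrsim \cdl s^3$ to a uniform $\|\mathrm{grad}\|^2$ lower bound on all of $\mathcal{C}_1\setminus B_{\cdistfrommin s}$; invoking the geometric analysis of \cite{sun2017complete} plugs this gap.
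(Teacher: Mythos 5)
Your Phase~1 matches the paper's: with $r\sim 1/\sqrt{n}$ (the paper takes $r=\tfrac{1}{40\sqrt{5(n-1)}}$), Lemma \ref{neggraddllem} gives $\mb u^{(i)\ast}\mathrm{grad}\gtrsim \cdl\zeta/n^{3/2}$, Lemma \ref{zetalem} converts this into geometric growth $\zeta'\geq\zeta(1+c\eta\cdl/n)$ (it also disposes of your worry about the argmax coordinate changing), and the $\mu$-constraint arises exactly as you say. The probability argument also matches. The genuine gap is in Phase~2, in two respects. First, you stop the dispersive phase at $\mathcal{C}_1$, but $\zeta\geq 1$ only forces $\|\mb w\|^2\leq\tfrac{n-1}{n+3}$ (Lemma \ref{czlemma}), so $\mathcal{C}_1$ still contains points with $\|\mb w\|$ near $1$, far outside the region $\|\mb w\|\leq\tfrac{1}{20\sqrt{5}}$ where the large-gradient geometry of \cite{sun2017complete} (Proposition 7) is available. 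On that outer annulus the only tool is Lemma \ref{neggraddllem}, whose directional bound $\cdl r^3\zeta$ with $r\sim 1/\sqrt{n}$ yields $\|\mathrm{grad}\|^2\gtrsim\theta^2/n^3$ --- far too weak for your telescoping to give the claimed rate. The paper avoids this by running the geometric-growth phase until $\zeta$ is large enough that the iterate lies in $B^2_{1/20\sqrt{5}}(0)$, which is why the first summand carries the factor $n$.

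Second, even inside $B^2_{1/20\sqrt{5}}(0)\setminus B^2_s(0)$, your claimed uniform bound $\|\mathrm{grad}\|^2\gtrsim\theta s$ is asserted rather than derived, and the paper's mechanism is different: it does not telescope function values via Lemma \ref{riemgdlem} at all. Instead it uses the radial bound $\tfrac{\mb w^{\ast}\nabla_{\mb w}g^{\mathrm{pop}}_{\mathrm{DL}}(\mb w)}{\|\mb w\|}\geq c\theta$ from \cite{sun2017complete} to show that the potential $h(\mb q)=\tfrac{1}{2}\|\mb w\|^2$ decreases by at least $\tilde c\, s\theta\eta$ per iteration (after bounding the second-order Lagrange remainder of the exponential map by $CM^2\eta^2$ with $M=\sqrt{\theta n}$, which is where the constraint $\eta<\cetadlpop s/n$ enters), giving $t_2\lesssim\tfrac{1}{s\theta\eta}$ directly. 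Your function-value telescoping could be made to work in this inner region with the correct bound $\|\mathrm{grad}\|^2\gtrsim\theta^2\geq\theta s$, but you would still need to bound $f^{\mathrm{pop}}_{\mathrm{DL}}(\mb q^{(0)})-f^{\ast}$ and, more importantly, to close the annulus gap above before the argument is complete.
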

\begin{proof}[\bf{Proof of Lemma \ref{newratedllem}}: (Gradient descent convergence rate for dictionary learning - population)] \label{newratedllemp}
	
	The rate will be obtained by splitting $\mathcal{C}_{\zeta_0}$ into three regions. We consider convergence to $B^2_{s}(0)$ since this set contains a global minimizer. Note that the balls in the proof are defined with respect to $\mb w$. 
	
	%
	%
	%
	\subsection{$\mathcal{C}_{\zeta_0} \backslash B^2_{1/20\sqrt{5}}(0)$}
	
	The analysis in this region is completely analogous to that in the  first part of the proof of Lemma \ref{newratelem}. For every point in this set we have  
	\[
	\left\Vert \mb w\right\Vert _{\infty} > \frac{1}{20\sqrt{5(n-1)}}
	\]
	. From Lemma \ref{czlemma} we know that $\sqrt{\frac{n-1}{(2+\zeta^{(t)})\zeta^{(t)}+n}}<\frac{1}{20\sqrt{5}} \Rightarrow \mb w^{(t)} \in B^2_{1/20\sqrt{5}}(0)$ hence in this set $\zeta<8$. If we choose $r=\frac{1}{40\sqrt{5(n-1)}}$, since for every point in this region $r^3\zeta < 1$, we have $\frac{r^{5/2}\sqrt{\zeta}}{2\sqrt{3}}<\sqrt{\frac{\sqrt{1+\frac{3}{4}r^{3}\zeta}-1}{3}}r=z(r,\zeta)$ and we thus demand $\mu<\frac{\sqrt{\zeta_{0}}}{\left(40\sqrt{5(n-1)}\right)^{5/2}2\sqrt{3}}\leq\frac{r^{5/2}\sqrt{\zeta}}{2\sqrt{3}}$ and obtain from Lemma \ref{neggraddllem} that for $|w_i| > r$
	\[
	\mathbf{u}^{(i)\ast}\mathrm{grad}[f_{DL}^{pop}](\mathbf{q}(\mathbf{w}))\geq\frac{c_{DL}}{(8000(n-1))^{3/2}}
	\]
	. We now require $\eta<\frac{1}{360\sqrt{5\theta n(n-1)}}=\frac{b-r}{3M}$ we can apply Lemma \ref{zetalem} with $ b = \frac{1}{20\sqrt{5(n-1)}},r =\frac{1}{40\sqrt{5(n-1)}}, M = \sqrt{\theta n}$ (since the maximal norm of the Riemannian gradient is $\sqrt{\theta n}$ from Lemma \ref{gradupperlem}), obtaining that at every iteration in this region 
	\[\zeta'\geq\zeta\left(1+\frac{\sqrt{n}c_{DL}}{2(8000(n-1))^{3/2}}\eta\right)\]
	and the maximal number of iterations required to obtain $\zeta > 8$ and exit this region is given by 
	\begin{equation} \lbl{t1eq}
	t_{1}=\frac{\log(8/\zeta_0)}{\log\left(1+\frac{\sqrt{n}c_{DL}}{2(8000(n-1))^{3/2}}\eta\right)}
	\end{equation}

	\subsection{$B^2_{1/20\sqrt{5}}(0) \backslash B^2_{s}(0)$}
	
	According to Proposition 7 in \cite{sun2017complete}, which we can apply since $s \geq \frac{\mu}{4\sqrt{2}},\mu < \frac{9}{50}$, in this region we have 
	
	\[
	\frac{\mb w^{\ast}\nabla_{\mb w}g_{DL}^{pop}(\mb w)}{\left\Vert \mb w \right\Vert }\geq c \theta
	\]
	
	A simple calculation shows that $\nabla_{\mb w}g_{DL}^{pop}(\mb w)=\left(\frac{\partial \mb \varphi}{\partial\mathbf{w}}\right)^{\ast}\mathrm{grad}[f_{DL}^{pop}](\mathbf{q}(\mathbf{w}))$
	where $\mb \varphi$ is the map defined in \ref{phieq}, and thus
	
	\[
	\frac{\mathbf{w}^{\ast}\left(\frac{\partial \mb \varphi}{\partial\mathbf{w}}\right)^{\ast}\mathrm{grad}[f_{DL}^{pop}](\mathbf{q}(\mathbf{w}))}{\left\Vert \mathbf{w}\right\Vert }=\frac{\left(\begin{array}{c}
		\mathbf{w}^{\ast}\\
		-\frac{\left\Vert \mathbf{w}\right\Vert ^{2}}{q_{n}}
		\end{array}\right)\mathrm{grad}[f_{DL}^{pop}](\mathbf{q}(\mathbf{w}))}{\left\Vert \mathbf{w}\right\Vert }
	\]
	\begin{equation} \label{gradprojeq}
	>\theta c
	\end{equation}
	
	. Defining $h(\mathbf{q})=\frac{\left\Vert \mathbf{w}\right\Vert ^{2}}{2}$,
	and denoting by $\mathbf{q}'$ an update of Riemannian gradient descent
	with step size $\eta$, we have (using a Lagrange remainder term)
	
	\[
	h(\mathbf{q}')=h(\mathbf{q})+\frac{\partial h(\mathbf{q}')}{\partial\eta}\eta+\underbrace{\underset{0}{\overset{\eta}{\int}}dt\frac{\partial^{2}h(\mathbf{q}')}{\partial\eta^{2}}_{\eta=t}(\eta-t)}_{\equiv R}
	\]
	
	\[
	=\frac{\left\Vert \mathbf{w}\right\Vert ^{2}}{2}-\left\langle \mathrm{grad}[f_{DL}^{pop}](\mathbf{q}),\frac{\partial h(\mathbf{q})}{\partial\mathbf{q}}\right\rangle +R
	\]
	
	where in the last line we used $\mathbf{q}'=\cos(g\eta)\mathbf{q}-\sin(g\eta)\frac{\mathrm{grad}[f_{DL}^{pop}](\mathbf{q})}{g}$
	where $g\equiv\left\Vert \mathrm{grad}[f_{DL}^{pop}](\mathbf{q})\right\Vert $.
	Since $\left\langle \mathrm{grad}[f_{DL}^{pop}](\mathbf{q}),\frac{\partial h(\mathbf{q})}{\partial\mathbf{q}}\right\rangle =\left\langle \mathrm{grad}[f_{DL}^{pop}](\mathbf{q}),\left(\mathbf{I}-\mathbf{q}\mathbf{q}^{\ast}\right)\frac{\partial h(\mathbf{q})}{\partial\mathbf{q}}\right\rangle $
	and
	
	\[
	\left(\mathbf{I}-\mathbf{q}\mathbf{q}^{\ast}\right)\frac{\partial h(\mathbf{q})}{\partial\mathbf{q}}=\left(\mathbf{I}-\mathbf{q}\mathbf{q}^{\ast}\right)\left(\begin{array}{c}
	\mathbf{w}\\
	-q_{n}
	\end{array}\right)
	\]
	\[
	=\left(\begin{array}{c}
	\mathbf{w}\\
	-q_{n}
	\end{array}\right)-(\left\Vert \mathbf{w}\right\Vert ^{2}-q_{n}^{2})\mathbf{q}
	=2(1-\left\Vert \mathbf{w}\right\Vert ^{2})\left(\begin{array}{c}
	\mathbf{w}\\
	-\frac{\left\Vert \mathbf{w}\right\Vert ^{2}}{q_{n}}
	\end{array}\right)
	\]
	
	we obtain (using \ref{gradprojeq})
	
	\[
	\frac{\left\Vert \mathbf{w}'\right\Vert ^{2}}{2}=\frac{\left\Vert \mathbf{w}\right\Vert ^{2}}{2}+2(1-\left\Vert \mathbf{w}\right\Vert ^{2})\eta\left\langle \mathrm{grad}[f_{DL}^{pop}](\mathbf{q}),\left(\begin{array}{c}
	\mathbf{w}\\
	-\frac{\left\Vert \mathbf{w}\right\Vert ^{2}}{q_{n}}
	\end{array}\right)\right\rangle +R
	\]
	\[
	<\frac{\left\Vert \mathbf{w}\right\Vert ^{2}}{2}-2(1-\left\Vert \mathbf{w}\right\Vert ^{2})\left\Vert \mathbf{w}\right\Vert \theta c\eta+R
	\]
	
	It remains to bound $R$. Denoting $r=\left(\begin{array}{c}
	\mathbf{w}\\
	-q_{n}
	\end{array}\right)^{\ast}\mathrm{grad}[f](\mathbf{q})$ we have 
	
	\[
	\frac{\partial^{2}h(\mathbf{q}')}{\partial\eta^{2}}_{\eta=t}=\left(\frac{\partial\mathbf{q}'}{\partial\eta}\right)^{\ast}\frac{\partial^{2}h(\mathbf{q})}{\partial\mathbf{q}\partial\mathbf{q}}\frac{\partial\mathbf{q}'}{\partial\eta}{}_{\eta=t}+\frac{\partial h(\mathbf{q})}{\partial\mathbf{q}}^{\ast}\frac{\partial^{2}\mathbf{q}'}{\partial\eta^{2}}{}_{\eta=t}
	\]
	
	
	\[
	=\begin{array}{c}
	\cos^{2}(gt)\left(\overline{\mathrm{grad}}[f_{DL}^{pop}](\mathbf{q})^{2}-\mathrm{grad}[f_{DL}^{pop}](\mathbf{q})_{n}^{2}\right)\\
	+g^{2}\left(\sin^{2}(gt)-\cos(gt)\right)\left(\left\Vert \mathbf{w}\right\Vert ^{2}-q_{n}^{2}\right)\\
	+g\sin(gt)r(1+2\cos(gt))
	\end{array}
	\]
	
	hence for some $C>0$, if $\left\Vert \mathrm{grad}[f_{DL}^{pop}](\mathbf{q})\right\Vert <M$
	we have 
	
	\[
	R<CM^{2}\eta^{2}
	\]
	
	and thus choosing $\eta<\frac{(1-\left\Vert \mathbf{w}\right\Vert ^{2})\left\Vert \mathbf{w}\right\Vert \theta c}{CM^{2}}$ we find 
	
	\[
	\left\Vert \mathbf{w}'\right\Vert ^{2}<\left\Vert \mathbf{w}\right\Vert ^{2}-2(1-\left\Vert \mathbf{w}\right\Vert ^{2})\left\Vert \mathbf{w}\right\Vert c \theta \eta
	\]
	
	and in our region of interest $\left\Vert \mathbf{w}'\right\Vert ^{2}<\left\Vert \mathbf{w}\right\Vert ^{2}-\tilde{c} s \theta \eta$
	for some $\tilde{c}>0$ and thus summing over iterations, we obtain
	for some $\tilde{C}_{2}>0$
	
	\begin{equation} \lbl{t2eq}
	t_{2} = \frac{\tilde{C}_{2}}{s \theta \eta}.
	\end{equation}
	
	From Lemma \ref{gradupperlem}, $M=\sqrt{\theta n}$ and thus with a suitably chosen
	$\cetadlpop>0$, $\eta<\frac{\cetadlpop s}{n}$ satisfies the above requirement on $\eta$ as well as the previous requirements, since $\theta < 1$.

	\subsection{Final rate and distance to minimizer}
	Combining these results gives, we find that when initializing in $\mathcal{C}_{\zeta_0}$, the maximal number of iterations required for Riemannian gradient descent to enter $B_{s}^{2}(0)$ is 
	\[
	T\leq t_1 + t_2 < \frac{ \Cdlpop}{\eta\theta}\left(n\log\frac{1}{\zeta_{0}}+\frac{1}{s}\right)
	\]
	
	for some suitably chosen $\Cdlpop$, where $t_1,t_2$ are given in \ref{t1eq},\ref{t2eq}. 
	The probability of such an initialization is given by the probability of initializing in one of the $2n$ possible choices of $\mathcal{C}_{\zeta}$, which is bounded in Lemma \ref{vollem}. 
	
	Once $\mathbf{w}\in B_{s}^{2}(0)$, the distance in $\mathbb{R}^{n-1}$ between $\mathbf{w}$ and a solution to the problem (which is a signed basis vector, given by the point $\mb w= \mb 0$ or an analog on a different symmetric section of the sphere) is no larger than $s$, which in turn implies that the Riemannian distance between $\mb \varphi(\mb w)$ and a solution is no larger than $\cdistfrommin s$ for some $\cdistfrommin > 0$. We note that the conditions on $\mu$ can be satisfied by requiring $\mu < \frac{\cmudlpop \sqrt{\zeta_0}}{n^{5/4}} $. 
	
\end{proof}

\begin{lemma}[Dictionary learning gradient upper bound] \label{gradupperlem}
	The dictionary learning population gradient obeys
	\[
	\left\Vert \nabla_{\mb w}g_{DL}^{pop}(\mb w)\right\Vert \leq \sqrt{2 \theta n}
	\]
	\[
	\left\Vert \mathrm{grad}[f^{pop}_{DL}](\mathbf{q})\right\Vert \leq \sqrt{\theta n}
	\]
	while in the finite sample case 
	\[\left\Vert \nabla_{\mb w}g_{DL}(\mb w)\right\Vert ^{2}\leq\sqrt{2n}\left\Vert \mathbf{X}\right\Vert _{\infty}
	\]
	\[\left\Vert \mathrm{grad}[f_{DL}](\mathbf{q})\right\Vert \leq \sqrt{n}\left\Vert \mathbf{X}\right\Vert _{\infty}\]
	where $\mathbf{X}$ is the data matrix with i.i.d. $BG(\theta)$ entries.
\end{lemma}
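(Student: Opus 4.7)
My approach is to follow Lemma \ref{gradupperlemsep} essentially verbatim, substituting---wherever the data variable now appears multiplicatively in the gradient---the scalar bound $|\tanh|\leq 1$ with a moment estimate on the Bernoulli--Gaussian entries via $\mathbb{E}[x_i^2]=\theta$ in the population case, and with the uniform sample bound $|x_i^k|\leq\|\mb X\|_\infty$ in the finite-sample case. All four inequalities then reduce to Jensen's inequality, the nonexpansiveness of the orthogonal projection onto the tangent space, and the chain rule through $\mb\varphi$.

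For the two population bounds, I would first write $(\nabla f_{\mathrm{DL}}^{\mathrm{pop}}(\mb q))_i=\mathbb{E}_{\mb x}[\tanh(\mb q^*\mb x/\mu)\,x_i]$, apply Jensen with $\tanh^2\leq 1$ to obtain $\bigl((\nabla f_{\mathrm{DL}}^{\mathrm{pop}})_i\bigr)^2\leq\mathbb{E}[x_i^2]=\theta$, then sum over $i$ to conclude $\|\nabla f_{\mathrm{DL}}^{\mathrm{pop}}(\mb q)\|\leq\sqrt{n\theta}$. Since the Riemannian gradient is obtained by applying the orthogonal projector $\mb I-\mb q\mb q^*$, which cannot increase the norm, the same bound $\sqrt{n\theta}$ carries over to $\|\mathrm{grad}[f_{\mathrm{DL}}^{\mathrm{pop}}](\mb q)\|$. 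For the chart gradient, the chain rule yields $\nabla_{\mb w}g_{\mathrm{DL}}^{\mathrm{pop}}(\mb w)=(\partial\mb\varphi/\partial\mb w)^{*}\nabla f_{\mathrm{DL}}^{\mathrm{pop}}(\mb q(\mb w))$ with $(\partial\mb\varphi/\partial\mb w)^{*}=(\mb I_{n-1},\,-\mb w/q_n)$, so a coordinate-wise triangle inequality produces $\|\nabla_{\mb w}g_{\mathrm{DL}}^{\mathrm{pop}}\|^2\leq\|\nabla f_{\mathrm{DL}}^{\mathrm{pop}}\|^2+\bigl((\nabla f_{\mathrm{DL}}^{\mathrm{pop}})_n\bigr)^2\,\|\mb w\|^2/q_n^2\leq n\theta+\theta(n-1)\leq 2n\theta$, where the last step uses that inside $\mathcal{C}$ one has $q_n\geq\|\mb w\|_\infty\geq\|\mb w\|/\sqrt{n-1}$, and therefore $\|\mb w\|^2/q_n^2\leq n-1$.

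The two finite-sample bounds are obtained by rerunning the identical argument with the empirical average $(\nabla f_{\mathrm{DL}}(\mb q))_i=\frac{1}{p}\sum_{k=1}^p\tanh(\mb q^*\mb x^k/\mu)\,x_i^k$ in place of the expectation; the uniform bound $|\tanh|\leq 1$ together with $|x_i^k|\leq\|\mb X\|_\infty$ gives $|(\nabla f_{\mathrm{DL}})_i|\leq\|\mb X\|_\infty$, so $\|\nabla f_{\mathrm{DL}}(\mb q)\|\leq\sqrt{n}\,\|\mb X\|_\infty$, and the projection and chain-rule steps identical to the population case finish the remaining two inequalities. There is no substantive obstacle here; the only point requiring a moment of care is that the chart-gradient estimates implicitly use containment in $\mathcal{C}$ to control $\|\mb w\|/q_n$, which is consistent with the region of interest throughout Section \ref{dlsec}.
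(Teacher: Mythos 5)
Your proposal is correct and matches the paper's proof essentially step for step: Jensen's inequality together with $|\tanh|\leq 1$ reduces everything to second moments of the $BG(\theta)$ entries in the population case (or to $\left\Vert \mathbf{X}\right\Vert_{\infty}$ in the finite-sample case), the projector $\mathbf{I}-\mathbf{q}\mathbf{q}^{\ast}$ is nonexpansive, and the chart gradient is controlled via $\left\Vert \mathbf{w}\right\Vert/q_{n}\leq\sqrt{n-1}$ on $\mathcal{C}$. The only cosmetic difference is that you apply Jensen coordinate-wise before summing while the paper applies it to the vector norm directly, and your $\|\mathbf{a}-\mathbf{b}\|^{2}\leq\|\mathbf{a}\|^{2}+\|\mathbf{b}\|^{2}$ step drops the same cross term the paper's own proof drops, so it introduces no new gap.
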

\begin{proof}
	Denoting $\mb x \equiv (\overline{\mathbf{x}},x_n)$ we have 
	\[
	\left\Vert \nabla_{\mb w}g_{DL}^{pop}(\mb w)\right\Vert ^{2}=\left\Vert \mathbb{E}\left[\tanh\left(\frac{\mathbf{q}^{\ast}\mathbf{x}}{\mu}\right)\left(\overline{\mathbf{x}}-x_{n}\frac{\mathbf{w}}{q_{n}}\right)\right]\right\Vert ^{2}
	\]
	and using Jensen's inequality, convexity of the $L^2$ norm and the triangle inequality to obtain
	\[
	\leq\mathbb{E}\left[\left\Vert \tanh\left(\frac{\mathbf{q}^{\ast}\mathbf{x}}{\mu}\right)\overline{\mathbf{x}}\right\Vert ^{2}+\left\Vert \tanh\left(\frac{\mathbf{q}^{\ast}\mathbf{x}}{\mu}\right)\left(x_{n}\frac{\mathbf{w}}{q_{n}}\right)\right\Vert ^{2}\right]
	\]
	\[
	\leq\mathbb{E}\left[\left\Vert \overline{\mathbf{x}}\right\Vert ^{2}+\left\Vert x_{n}\frac{\mathbf{w}}{q_{n}}\right\Vert ^{2}\right]\leq 2\theta n
	\]
	while 
	\[
	\left\Vert \mathrm{grad}[f^{pop}_{DL}](\mathbf{q})\right\Vert \leq\left\Vert \nabla f^{pop}_{DL}(\mathbf{q})\right\Vert 
	\]
	\[
	=\left\Vert \mathbb{E}\left[\tanh\left(\frac{\mathbf{q}^{\ast}\mathbf{x}}{\mu}\right)\mathbf{x}\right]\right\Vert \leq \sqrt{\theta n}
	\]
	Similarly, in the finite sample size case one obtains 
	\[\left\Vert \nabla_{\mb w}g_{DL}(\mb w)\right\Vert ^{2}\leq\frac{1}{p}\underset{i=1}{\overset{p}{\sum}}\left\Vert \overline{\mathbf{x}}^{i}\right\Vert ^{2}+\left\Vert x_{n}^{i}\frac{\mathbf{w}}{q_{n}}\right\Vert ^{2}\leq2 n \left\Vert \mathbf{X}\right\Vert _{\infty}^{2}\]
	\[\left\Vert \mathrm{grad}[f_{DL}](\mathbf{q})\right\Vert \leq\frac{1}{p}\underset{i=1}{\overset{p}{\sum}}\left\Vert \tanh\left(\frac{\mathbf{q}^{\ast}\mathbf{x}^{i}}{\mu}\right)\mathbf{x}^{i}\right\Vert 
	\]
	\[
	\leq \sqrt{n} \left\Vert \mathbf{X}\right\Vert _{\infty}\]
	
\end{proof}

\begin{proof}[\bf{Proof of Theorem \ref{dlratethm}}: (Gradient descent convergence rate for dictionary learning)] \label{dlratethmp}	
	The proof will follow exactly that of Lemma \ref{newratedllem}, with the finite sample size fluctuations decreasing the guaranteed change in $\zeta$ or $|| \mb w ||$ at every iteration (for the initial and final stages respectively) which will adversely affect the bounds.
	
	\subsection{$\mathcal{C}_{\zeta_0} \backslash B^2_{1/20\sqrt{5}}(0)$}
	To control the fluctuations in the gradient projection, we choose  
	\[
	y(\theta,\zeta_{0})=\frac{\zeta_{0}c_{DL}}{2(8000(n-1))^{3/2}}
	\]
	which can be satisfied by choosing $y(\theta,\zeta_{0})=\frac{\cz \theta(1-\theta)\zeta_{0}}{n^{3/2}}$ for an appropriate $\cz > 0$ . According to Lemma \ref{uniflem}, with probability greater than $\mathbb{P}_{y}$ we then have 
	\[\left\vert \begin{array}{c}
	\mathbf{u}^{(i)\ast}\mathrm{grad}[f_{DL}](\mathbf{q}(\mathbf{w}))\\
	-\mathbb{E}\left[\mathbf{u}^{(i)\ast}\mathrm{grad}[f_{DL}](\mathbf{q}(\mathbf{w}))\right]
	\end{array}\right\vert \leq y(\theta,\zeta)\]
	
	With the same condition on $\mu$ as in Lemma \ref{newratedllem}, combined with the uniformized bound on finite sample fluctuations, we have that at every point in this set
	\[\mathbf{u}^{(i)\ast}\mathrm{grad}[f_{DL}^{pop}](\mathbf{q}(\mathbf{w}))\geq\frac{c_{DL}}{2(8000(n-1))^{3/2}}\]
	. According to Lemma \ref{gradupperlem} the Riemannian gradient norm is bounded by $M=\sqrt{n}\left\Vert \mathbf{X}\right\Vert _{\infty}$. Choosing $r,b$ as in Lemma \ref{newratedllem}, we require $\eta<\frac{1}{360 \left\Vert \mathbf{X}\right\Vert _{\infty}\sqrt{5 n(n-1)}}=\frac{b-r}{3M}$ and obtain from Lemma \ref{zetalem}
	\[\zeta'\geq\zeta\left(1+\frac{\sqrt{n}c_{DL}}{4(8000(n-1))^{3/2}}\eta\right)\]
	\begin{equation} \lbl{t12eq}
	t_{1}=\frac{\log(8/\zeta_0)}{\log\left(1+\frac{\sqrt{n}c_{DL}}{4(8000(n-1))^{3/2}}\eta\right)}
	\end{equation}

	\subsection{$B^2_{1/20\sqrt{5}}(0) \backslash B^2_{s}(0)$}
	From Theorem 2 in \cite{sun2017complete} there are numerical constants $c_b,c_\star$ such that in this region 
	\[
	\frac{\mb w^{\ast}\nabla_{\mb w}g_{DL}(\mb w)}{\left\Vert \mb w\right\Vert } = \frac{\mathbf{w}^{\ast}\left(\frac{\partial \mb \varphi}{\partial\mathbf{w}}\right)^{\ast}\mathrm{grad}[f](\mathbf{q}(\mathbf{w}))}{\left\Vert \mathbf{w}\right\Vert } \geq c_\star \theta
	\]
	with probability $\mathbb{P}>1-c_{b}p^{-6} $. Following the same analysis as in Lemma \ref{newratedllem}, since from Lemma \ref{gradupperlem} the norm of the gradient gradient is bounded by $\sqrt{n} || \mathbf{X}|| _{\infty}$ we require $\eta<\frac{(1-\left\Vert \mathbf{w}\right\Vert ^{2})\left\Vert \mathbf{w}\right\Vert \theta c_\star}{C n || \mathbf{X}|| _{\infty}^{2}}$ which is satisfied by requiring $\eta<\frac{\tilde{c}\theta s}{n ||\mathbf{X}||_{\infty}^{2}}$ for some chosen $\tilde{c}>0$. We then obtain 
	
	\begin{equation} \lbl{t32eq}
	t_{3} = \frac{C_{2}}{s \theta \eta}
	\end{equation}
	
	for a suitably chosen $C_2 > 0$. 
	
	\subsection{Final rate and distance to minimizer}
	The final bound on the rate is obtained by summing over the terms for the three regions as in the population case, and convergence is again to a distance of less than $\cdistfrommin s$ from a local minimizer. The probability of achieving this rate is obtained by taking a union bound over the probability of initialization in $\mathcal{C}_{\zeta_0}$ (given in Lemma \ref{vollem}) and the probabilities of the bounds on the gradient fluctuations holding (from Lemma \ref{uniflem} and \cite{sun2017complete}). Note that the fluctuation bound events imply by construction the event $\mathcal{E}_{\infty} = \{1\leq\left\Vert \mathbf{X}\right\Vert _{\infty}\leq4\sqrt{\log(np)}\}$ hence we can replace $\left\Vert \mathbf{X}\right\Vert _{\infty}$ in the conditions on $\eta$ above by $4\sqrt{\log(np)}$.
	The conditions on $\eta,\mu$ can be satisfied by requiring $\eta < \frac{\cetadl \theta s}{n \log{np}}, \mu < \frac{\cmudl \sqrt{\zeta_0}}{n^{5/4}}$ for suitably chosen $\cetadl,\cmudl > 0$. The bound on the number of iterations can be simplified to the form in the theorem statement as in the population case.
\end{proof}

\section{Generalized Phase Retrieval} \label{appgpr}

We show below that negative curvature normal to stable manifolds of saddle points in strict saddle functions is a feature that is found not only in dictionary learning, and can be used to obtain efficient convergence rates for other nonconvex problems as well, by presenting an analysis of generalized phase retrieval that is along similar lines to the dictionary learning analysis. We stress that this contribution is not novel since a more thorough analysis was carried out by \cite{chen2018gradient}. The resulting rates are also suboptimal, and pertain only to the population objective.

Generalized phase retrieval is the problem of recovering a vector
$\mathbf{x}\in\mathbb{C}^{n}$ given a set of magnitudes of projections
$y_{k}=\left\vert \mathbf{x}^{\ast}\mathbf{a}_{k}\right\vert $ onto
a known set of vectors $\mathbf{a}_{k}\in\mathbb{C}^{n}$. It arises in numerous domains including microscopy \cite{miao2002high}, acoustics \cite{balan2006signal}, and quantum mechanics \cite{corbett2006pauli} (see \cite{shechtman2015phase} for a review). Clearly
$\mathbf{x}$ can only be recovered up to a global phase. We consider
the setting where the elements of every $\mathbf{a}_{k}$ are i.i.d.
complex Gaussian, (meaning $(a_{k})_{j}=u+iv$ for
$u,v\sim\mathcal{N}(0,1/\sqrt{2})$).
We analyze the least squares formulation of the problem \cite{candes2015phase}
given by 
\[
\underset{\mathbf{z}\in\mathbb{C}^{n}}{\min}f(\mathbf{z})=\frac{1}{2p}\underset{k=1}{\overset{p}{\sum}}\left(y_{k}^{2}-\left\vert \mathbf{z}^{\ast}\mathbf{a}_{k}\right\vert ^{2}\right)^{2}.
\]
Taking the expectation (large $p$ limit) of the above objective and organizing its derivatives using Wirtinger calculus \cite{kreutz2009complex}, we obtain
\begin{equation} \label{gprobjeq}
\mathbb{E}[f]=\left\Vert \mathbf{x}\right\Vert ^{4}+\left\Vert \mathbf{z}\right\Vert ^{4}-\left\Vert \mathbf{x}\right\Vert ^{2}\left\Vert \mathbf{z}\right\Vert ^{2}-\left\vert \mathbf{x}^{\ast}\mathbf{z}\right\vert ^{2}
\end{equation}
\[
\nabla\mathbb{E}[f]=\left[\begin{array}{c}
\nabla_{\mathbf{z}}\mathbb{E}[f]\\
\nabla_{\overline{\mathbf{z}}}\mathbb{E}[f]
\end{array}\right]
\]
\[
=\left[\begin{array}{c}
\left((2\left\Vert \mathbf{z}\right\Vert ^{2}-\left\Vert \mathbf{x}\right\Vert ^{2})\mathbf{I}-\mathbf{xx}^{\ast}\right)\mathbf{z}\\
\left((2\left\Vert \mathbf{z}\right\Vert ^{2}-\left\Vert \mathbf{x}\right\Vert ^{2})\mathbf{I}-\mathbf{\overline{x}x}^{T}\right)\mathbf{\overline{z}}
\end{array}\right].
\]
For the remainder of this section, we analyze this objective, leaving the consideration of finite sample size effects to future work. 

\subsection{The geometry of the objective}
In \cite{sun2016geometric} it was shown that aside from the manifold of minima 
\[
\breve{A} \;\equiv\; \mathbf{x}e^{i\theta},
\]
the only critical points of $\mathbb{E}[f]$ are a maximum at $\mathbf{z}=\mb 0$
and a manifold of saddle points given by 
\[
\Anc \; \backslash \; \{ \mb 0\} \;\equiv\; \left\{ \mathbf{z} \; \middle| \; \mathbf{z}\in W,\left\Vert \mathbf{z}\right\Vert =\frac{\left\Vert \mathbf{x}\right\Vert }{\sqrt{2}} \right\}
\]
where $W\equiv\{\mathbf{z}|\mathbf{z}^{\ast}\mathbf{x}=0\}$.
We decompose $\mathbf{z}$ as 
\begin{equation} \label{zdefeq}
\mathbf{z}=\mathbf{w}+\zeta e^{i\phi}\frac{\mathbf{x}}{\left\Vert \mathbf{x}\right\Vert },
\end{equation}
where $\zeta>0,\mathbf{w}\in W$. This gives $\left\Vert \mathbf{z}\right\Vert ^{2}=\left\Vert \mathbf{w}\right\Vert ^{2}+\zeta^{2}$.
The choice of $\mathbf{w},\zeta,\phi$ is unique up to factors of
$2\pi$ in $\phi$, as can be seen by taking an inner product with
$\mathbf{x}$. Since the gradient decomposes as follows:
\[
\nabla_{\mathbf{z}}\mathbb{E}[f]=\left(2\left\Vert \mathbf{z}\right\Vert ^{2}I-\left\Vert \mathbf{x}\right\Vert ^{2}I-\mathbf{x}\mathbf{x}^{\ast}\right)(\mathbf{w}+\zeta e^{i\phi}\frac{\mathbf{x}}{\left\Vert \mathbf{x}\right\Vert })
\]
\begin{equation} \label{gradgpreq}
=\left(2\left\Vert \mathbf{z}\right\Vert ^{2}-\left\Vert \mathbf{x}\right\Vert ^{2}\right)\mathbf{w}+2\zeta e^{i\phi}\left(\left\Vert \mathbf{z}\right\Vert ^{2}-\left\Vert \mathbf{x}\right\Vert ^{2}\right)\frac{\mathbf{x}}{\left\Vert \mathbf{x}\right\Vert } 
\end{equation}
the directions $e^{i\phi}\frac{\mathbf{x}}{\left\Vert \mathbf{x}\right\Vert },\frac{\mathbf{w}}{\left\Vert \mathbf{w}\right\Vert }$
are unaffected by gradient descent and thus the problem reduces to
a two-dimensional one in the space $(\zeta,\left\Vert \mathbf{w}\right\Vert )$. Note also that the objective for this two-dimensional problem is a Morse function, despite the fact that in the original space there was a manifold of saddle points. 
It is also clear from this decomposition of the gradient that the
stable manifolds of the saddles are precisely the set $W$. 

It is evident from \ref{gradgpreq} that the dispersive property does not hold globally in this case. For $\mb z \notin B_{|| \mb x ||}$ we see that gradient descent will cause $\zeta$ to decrease, implying positive curvature normal to the stable manifolds of the saddles. This is a consequence of the global geometry of the objective. Despite this, in the region of the space that is more "interesting", namely $B_{|| \mb x ||}$, we do observe the dispersive property, and can use it to obtain a convergence rate for gradient descent. 

We define a set that contains the regions that feeds into small gradient regions
around saddle points within $B_{|| \mb x ||}$ by 
\[
\overline{Q}_{\zeta_{0}} \equiv \{\mathbf{z}(\zeta,\left\Vert \mathbf{w}\right\Vert )|\zeta\leq\zeta_{0}\}.
\]
We will show that, as in the case of orthogonal dictionary learning,
we can both bound the probability of initializing in (a subset of)
the complement of $\overline{Q}_{\zeta_{0}}$ and obtain a rate for convergence
of gradient descent in the case of such an initialization. \footnote{$\overline{Q}_{\zeta_{0}}$ is equivalent to the complement of the set $C_\zeta$ used in the analysis of the separable objective and dictionary learning. }

We now define four regions of the space which will be used in the analysis
of gradient descent:
\begin{eqnarray*}
	S_{1}&\equiv& \left\{\mathbf{z} \;\middle | \; \left\Vert \mathbf{z}\right\Vert ^{2}\leq\tfrac{1}{2} \left\Vert \mathbf{x}\right\Vert ^{2} \right \} \\
	S_{2}&\equiv& \left\{\mathbf{z}\; \middle | \; \tfrac{1}{2} \left\Vert \mathbf{x}\right\Vert ^{2}<\left\Vert \mathbf{z}\right\Vert ^{2}\leq(1-c)\left\Vert \mathbf{x}\right\Vert ^{2}\right\} \\
	S_{3} &\equiv& \left\{\mathbf{z}\; \middle | \; (1-c)\left\Vert \mathbf{x}\right\Vert ^{2}<\left\Vert \mathbf{z}\right\Vert ^{2}\leq\left\Vert \mathbf{x}\right\Vert ^{2} \right \} \\
	S_{4}&\equiv& \left\{\mathbf{z}\; \middle | \; \left\Vert \mathbf{x}\right\Vert ^{2}<\left\Vert \mathbf{z}\right\Vert ^{2}\leq(1+c)\left\Vert \mathbf{x}\right\Vert ^{2} \right \}
\end{eqnarray*}
defined for some $c<\frac{1}{4}$. These are shown in Figure \ref{gprsetsfig}. 

\begin{figure} \
	\includegraphics[width=3in]{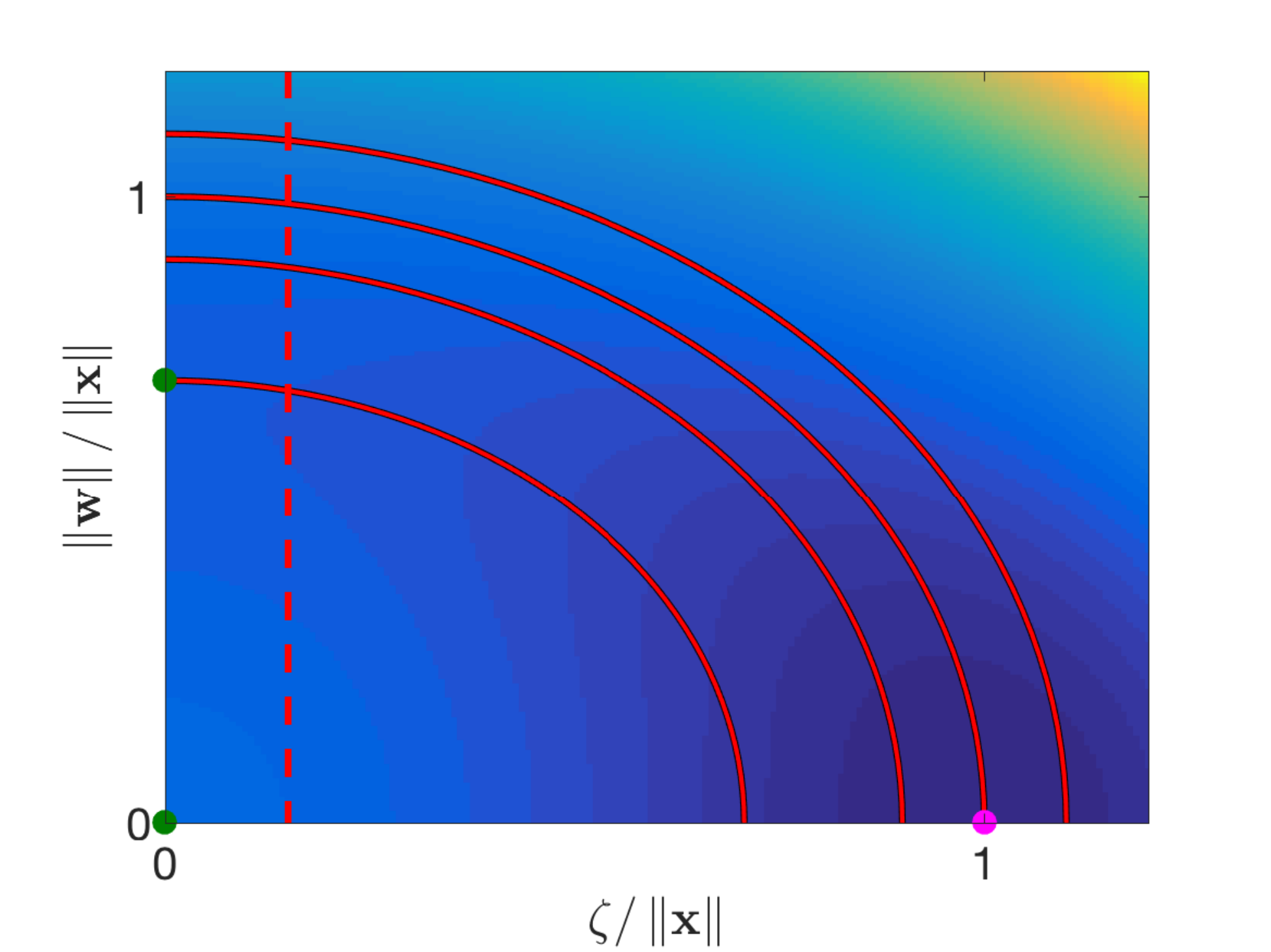}
	\caption{The projection of the objective of generalized phase retrieval on the $(\frac{\zeta}{\left\Vert \mathbf{x}\right\Vert },\frac{\left\Vert \mathbf{w}\right\Vert }{\left\Vert \mathbf{x}\right\Vert })$ plane. The full red curves are the boundaries between the sets $S_1,S_2,S_3,S_4$ used in the analysis. The dashed red line is the boundary of the set $\overline{Q}_{\zeta_{0}}$ that contains small gradient regions around critical points that are not minima. The maximizer and saddle point are shown in dark green, while the minimizer is in pink.} 
	\label{gprsetsfig}
\end{figure}

We now define 
\begin{equation} \label{ztageq}
\mathbf{z'}\equiv\mathbf{z}-\eta\nabla_{\mathbf{z}}\mathbb{E}[f]\equiv\mathbf{w}'+\zeta'e^{i\phi}\frac{\mathbf{x}}{\left\Vert \mathbf{x}\right\Vert }
\end{equation}
and using \ref{gradgpreq} obtain
\begin{subequations} \label{zweqs}
	\begin{align}
	\zeta' =& \left(1-2\eta(\left\Vert \mathbf{z}\right\Vert ^{2}-\left\Vert \mathbf{x}\right\Vert ^{2})\right)\zeta \\
	\left\Vert \mathbf{w}'\right\Vert =& \left(1-\eta\left(2\left\Vert \mathbf{z}\right\Vert ^{2}-\left\Vert \mathbf{x}\right\Vert ^{2}\right)\right)\left\Vert \mathbf{w}\right\Vert.
	\end{align}
\end{subequations}
These are used to find the change in $\zeta,\left\Vert \mathbf{w}\right\Vert $
at every iteration in each region:
\begin{subequations} \label{sineqs}
	\begin{align}
	\text{\bf On $S_1$:} \qquad &\zeta' \geq (1+\eta\left\Vert \mathbf{x}\right\Vert ^{2})\zeta\\
	& \left\Vert \mathbf{w}'\right\Vert \geq \left\Vert \mathbf{w}\right\Vert \\
	\text{\bf On $S_2$:} \qquad & \zeta' \geq (1+2c\eta\left\Vert \mathbf{x}\right\Vert ^{2})\zeta\\
	&\left\Vert \mathbf{w}'\right\Vert \leq \left\Vert \mathbf{w}\right\Vert \\
	\text{\bf On $S_3$:} \qquad & \left(1-\eta\left\Vert \mathbf{x}\right\Vert ^{2}\right)\left\Vert \mathbf{w}\right\Vert \leq \left\Vert \mathbf{w}'\right\Vert \nonumber \\
	& \qquad  \leq \left(1-(1-2c)\eta\left\Vert \mathbf{x}\right\Vert ^{2}\right)\left\Vert \mathbf{w}\right\Vert \\
	& \zeta \leq\zeta'\leq(1+2c\eta\left\Vert \mathbf{x}\right\Vert ^{2})\zeta \\
	\text{\bf On $S_4$:} \qquad &
	\left(1-(1+2c)\eta\left\Vert \mathbf{x}\right\Vert ^{2}\right)\left\Vert \mathbf{w}\right\Vert \leq\left\Vert \mathbf{w}'\right\Vert \nonumber \\
	& \qquad \leq\left(1-\eta\left\Vert \mathbf{x}\right\Vert ^{2}\right)\left\Vert \mathbf{w}\right\Vert 
	\\
	& (1-2c\eta\left\Vert \mathbf{x}\right\Vert ^{2})\zeta\leq\zeta'\leq\zeta
	\end{align}
\end{subequations}

\subsection{Behavior of gradient descent in $\cup_{i=1}^4 S_{i}$}
We now show that gradient descent initialized in $S_1 \backslash \overline{Q}_{\zeta_{0}}$ cannot exit $\cup_{i=1}^4 S_{i}$ or enter $\overline{Q}_{\zeta_{0}}$. Lemma \ref{ziterlem2} guarantees that gradient descent initialized in $\cup_{i=1}^4 S_{i}$ remains in this set. 
From equation \ref{sineqs} we see that a gradient descent step can only decrease $\zeta$ if $\mathbf{z}\in S_{4}$. Under the mild assumption $\zeta_{0}^{2}<\frac{7}{16}\left\Vert \mathbf{x}\right\Vert ^{2}$ we are guaranteed from Lemma \ref{wziterlem} that at every iteration $\zeta\geq\zeta_{0}$. Thus the region with $\zeta<\zeta{}_{0}$ can only be entered if gradient descent is initialized in it. It follows that initialization in $S_1 \backslash \overline{Q}_{\zeta_{0}}$ rules out entering $\overline{Q}_{\zeta_{0}}$ at any future iteration of gradient descent. Since this guarantees that regions that feed into small gradient regions are avoided, an efficient convergence rate can again be obtained. 

\subsection{Convergence rate}
\begin{theorem}[Gradient descent convergence rate for generalized phase retrieval] \label{gprthm}
	Gradient descent on \ref{gprobjeq} with step size $\eta<\frac{\sqrt{c}}{4\left\Vert \mathbf{x}\right\Vert ^{2}},c<\frac{1}{4}$, initialized uniformly in $S_1$ converges to a point $\mb z$ such that $\mathrm{dist}(\mathbf{z},\breve{A})<\sqrt{5c}\left\Vert \mathbf{x}\right\Vert $ in  
	\[
	\begin{array}{c}
	T<\frac{\log\left(\frac{\left\Vert \mathbf{x}\right\Vert }{\zeta\sqrt{2}}\right)}{\log(1+\eta\left\Vert \mathbf{x}\right\Vert ^{2})}+\frac{\log(2)}{2\log(1+2c\eta\left\Vert \mathbf{x}\right\Vert ^{2})}\\
	+\frac{\log(2c)\log(\frac{4}{\sqrt{7}})}{\log\left(1-\left(1-2c\right)\eta\left\Vert \mathbf{x}\right\Vert ^{2}\right)\log(1+2c\eta\left\Vert \mathbf{x}\right\Vert ^{2})}
	\end{array}
	\]
	iterations with probability 
	\[
	\mathbb{P} \ge 1 - \sqrt{\frac{8}{\pi}} \, \mathrm{erf}\left(\frac{\sqrt{2n}}{\left\Vert \mathbf{x}\right\Vert }\zeta \right),
	\]
\end{theorem}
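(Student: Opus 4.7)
By the decomposition \eqref{zdefeq} and the form \eqref{gradgpreq} of the gradient, the dynamics reduces to a two-dimensional system in $(\zeta^{(t)}, \left\Vert\mathbf{w}^{(t)}\right\Vert)$. I would begin by invoking Lemmas \ref{ziterlem2} and \ref{wziterlem} (together with the mild condition $\zeta^2 < \tfrac{7}{16}\left\Vert\mathbf{x}\right\Vert^2$, compatible with initialization in $S_1$) to conclude that iterates remain in $\bigcup_{i=1}^4 S_i$ and satisfy $\zeta^{(t)} \geq \zeta$ for every $t$; in particular the orbit avoids $\overline{Q}_\zeta$ forever, so the per-set recursions \eqref{sineqs} are valid throughout. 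The bound on $T$ then splits into three phases, one per term in the stated rate.

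For the first two phases I would apply the recursions directly. While the iterate is in $S_1$, the bound $\zeta^{(t+1)} \geq (1+\eta\left\Vert\mathbf{x}\right\Vert^2)\zeta^{(t)}$ forces exponential growth, and once $\zeta^2 \geq \tfrac{1}{2}\left\Vert\mathbf{x}\right\Vert^2$ we necessarily have $\left\Vert\mathbf{z}\right\Vert^2 \geq \tfrac{1}{2}\left\Vert\mathbf{x}\right\Vert^2$ and thus have exited $S_1$; inverting the geometric growth produces the first term. Inside $S_2$ the slower rate $\zeta^{(t+1)} \geq (1+2c\eta\left\Vert\mathbf{x}\right\Vert^2)\zeta^{(t)}$ applies while $\left\Vert\mathbf{w}^{(t)}\right\Vert$ is nonincreasing; a worst-case doubling of $\zeta^2$ suffices to push $\left\Vert\mathbf{z}\right\Vert^2$ past $(1-c)\left\Vert\mathbf{x}\right\Vert^2$ and out of $S_2$, giving the second term.

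The hard phase is the third one, where the iterate may oscillate between $S_3$ and $S_4$. Here $\left\Vert\mathbf{w}\right\Vert$ shrinks strictly in both sets, at rates $(1-(1-2c)\eta\left\Vert\mathbf{x}\right\Vert^2)$ and $(1-(1+2c)\eta\left\Vert\mathbf{x}\right\Vert^2)$ respectively, while $\zeta$ grows in $S_3$ and contracts in $S_4$. My plan is to analyze the orbit in cycles consisting of an $S_3$-excursion followed by an $S_4$-excursion: \eqref{sineqs} gives $\zeta^{(t+1)}/\zeta^{(t)} \leq 1+2c\eta\left\Vert\mathbf{x}\right\Vert^2$ in both sets, and the a priori range of $\zeta$ available in $S_3 \cup S_4$ (capped above in view of $\zeta^2 < \tfrac{7}{16}\left\Vert\mathbf{x}\right\Vert^2$, which yields the factor $4/\sqrt{7}$) limits each cycle to at most $\log(4/\sqrt{7})/\log(1+2c\eta\left\Vert\mathbf{x}\right\Vert^2)$ iterations, while $\left\Vert\mathbf{w}\right\Vert$ contracts across a cycle by at least the slower $S_3$ rate. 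A logarithmic number of cycles, $\log(1/(2c))/\log(1/(1-(1-2c)\eta\left\Vert\mathbf{x}\right\Vert^2))$, then drives $\left\Vert\mathbf{w}\right\Vert^2 \leq 2c\left\Vert\mathbf{x}\right\Vert^2$; multiplying cycle count by cycle length gives the third term. Coupling the $\zeta$ and $\left\Vert\mathbf{w}\right\Vert$ dynamics to certify that each cycle actually terminates within the stated length is where I expect most of the technical work.

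To convert the exit condition into the claimed distance bound, I would match the phase of a minimizer $\mathbf{x}e^{i\phi} \in \breve{A}$ and use orthogonality $\mathbf{w} \perp \mathbf{x}$ to obtain $\mathrm{dist}(\mathbf{z}, \breve{A})^2 \leq \left\Vert\mathbf{w}\right\Vert^2 + (\zeta-\left\Vert\mathbf{x}\right\Vert)^2$; both terms are $O(c)\left\Vert\mathbf{x}\right\Vert^2$ once $\left\Vert\mathbf{w}\right\Vert^2 \leq 2c\left\Vert\mathbf{x}\right\Vert^2$ and $\mathbf{z} \in S_3 \cup S_4$, yielding the $\sqrt{5c}\left\Vert\mathbf{x}\right\Vert$ radius by elementary estimation. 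For the probability bound, under uniform initialization in $S_1$ the complex scalar $(\mathbf{z}^{(0)})^{*}\mathbf{x}/\left\Vert\mathbf{x}\right\Vert$ has magnitude $\zeta^{(0)}$, and its two-real-dimensional marginal, obtained by integrating the uniform measure on the $2n$-real-dimensional ball of radius $\left\Vert\mathbf{x}\right\Vert/\sqrt{2}$ against the orthogonal complement, is approximately Gaussian with variance of order $\left\Vert\mathbf{x}\right\Vert^2/n$. Computing $\Pr[\zeta^{(0)} \leq \zeta]$ via this Gaussian marginal yields the $\mathrm{erf}(\sqrt{2n}\zeta/\left\Vert\mathbf{x}\right\Vert)$ tail with the $\sqrt{8/\pi}$ prefactor absorbing the Rayleigh-type density normalization.
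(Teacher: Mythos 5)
Your proposal follows essentially the same route as the paper: the same three-phase decomposition over $S_1$, $S_2$, and $S_3\cup S_4$ using the recursions \eqref{sineqs}, the same invariance and no-return-to-$\overline{Q}_{\zeta_0}$ arguments via Lemmas \ref{ziterlem2} and \ref{wziterlem}, the same multiplicative bookkeeping (cycle count times cycle length, with the $4/\sqrt{7}$ factor coming from the post-$S_4$ lower bound on $\zeta$) for the third term, the same phase-matched distance estimate, and the same equatorial-band volume computation for the failure probability. The plan is correct as stated.
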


\begin{proof} Please see Appendix \ref{gprthmp}.
\end{proof} 

We find that in order to prevent the failure probability from approaching
1 in a high dimensional setting, if we assume that $\left\Vert \mathbf{x}\right\Vert $
does not depend on $n$ we require that $\zeta$ scale like $\frac{1}{\sqrt{n}}$. This is simply the consequence of the well-known concentration of
volume of a hypersphere around the equator. Even
with this dependence the convergence rate itself depends only logarithmically on dimension, and this again is a consequence of the logarithmic dependence of $\zeta$ due to the curvature properties of the objective. 

\begin{lemma} \label{wziterlem}
	For any iterate $\mathbf{z}$ of gradient descent on \ref{gprobjeq}, assuming $\eta<\frac{\sqrt{c}}{4\left\Vert \mathbf{x}\right\Vert ^{2}},c<\frac{1}{4}$ and defining $\zeta'$ as in \ref{ztageq}, we have 
	i) 
	\[
	\mathbf{z}\in \underset{i=1}{\overset{4}{\bigcup}}S_{i}\Rightarrow\left\Vert \mathbf{w}\right\Vert ^{2}\leq\frac{\left\Vert \mathbf{x}\right\Vert ^{2}}{2}
	\]
	ii) 
	\[
	\mathbf{z}\in S_{4}\Rightarrow\zeta'^{2}\geq\frac{7}{16}\left\Vert \mathbf{x}\right\Vert ^{2}
	\]
\end{lemma}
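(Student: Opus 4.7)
The plan is to prove part (i) by induction on the iterate index $t$, under the implicit assumption (inherited from the surrounding theorem) that gradient descent is initialized in $S_1$; part (ii) then follows from a one-step computation that uses part (i).

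For part (i), the base case is immediate from initialization in $S_1$: $\|\mathbf{w}^{(0)}\|^2 \le \|\mathbf{z}^{(0)}\|^2 \le \|\mathbf{x}\|^2/2$. For the inductive step I would split on the region containing $\mathbf{z}^{(t)}$. On $S_2\cup S_3\cup S_4$, the update bounds (19) give $\|\mathbf{w}^{(t+1)}\|\le\|\mathbf{w}^{(t)}\|$, so the inductive hypothesis transfers unchanged. The interesting case is $\mathbf{z}^{(t)}\in S_1$, where the multiplier $1-\eta(2\|\mathbf{z}^{(t)}\|^2-\|\mathbf{x}\|^2)$ in (18b) exceeds $1$ and $\|\mathbf{w}\|$ can in principle grow.

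To control the $S_1$ case I would bound the update by optimizing over all admissible states. Writing $u=\|\mathbf{z}^{(t)}\|^2$ and $v=\|\mathbf{w}^{(t)}\|^2$, we have $\|\mathbf{w}^{(t+1)}\|^2=(1+\eta(\|\mathbf{x}\|^2-2u))^2 v$, which is monotone increasing in $v\in[0,u]$; the worst case within $S_1$ is thus $v=u$, yielding the envelope $g(u)=(1+\eta(\|\mathbf{x}\|^2-2u))^2 u$ on $[0,\|\mathbf{x}\|^2/2]$. A direct differentiation gives $g'(u)=(1+\eta(\|\mathbf{x}\|^2-2u))(1+\eta\|\mathbf{x}\|^2-6\eta u)$. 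Both factors are nonnegative on this interval provided $\eta\le 1/(2\|\mathbf{x}\|^2)$, which is implied by the hypothesis $\eta<\sqrt{c}/(4\|\mathbf{x}\|^2)$ together with $c<1/4$. Hence $g$ is nondecreasing on $[0,\|\mathbf{x}\|^2/2]$ and $\|\mathbf{w}^{(t+1)}\|^2\le g(\|\mathbf{x}\|^2/2)=\|\mathbf{x}\|^2/2$, closing the induction.

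For part (ii), on $S_4$ we have $\|\mathbf{z}\|^2>\|\mathbf{x}\|^2$, and by part (i) $\|\mathbf{w}\|^2\le\|\mathbf{x}\|^2/2$, so $\zeta^2=\|\mathbf{z}\|^2-\|\mathbf{w}\|^2>\|\mathbf{x}\|^2/2$. The update $\zeta'=(1-2\eta(\|\mathbf{z}\|^2-\|\mathbf{x}\|^2))\zeta$ combined with $\|\mathbf{z}\|^2-\|\mathbf{x}\|^2\le c\|\mathbf{x}\|^2$ gives $\zeta'^2\ge(1-2\eta c\|\mathbf{x}\|^2)^2\,\|\mathbf{x}\|^2/2$. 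The step-size and parameter hypotheses yield $2\eta c\|\mathbf{x}\|^2<c^{3/2}/2<1/16$ (using $c<1/4$), so $(1-2\eta c\|\mathbf{x}\|^2)^2>(15/16)^2>7/8$, and therefore $\zeta'^2>(7/8)\cdot\|\mathbf{x}\|^2/2=7\|\mathbf{x}\|^2/16$, as required. The main technical obstacle I anticipate is the envelope analysis for part (i): verifying that $g$ attains its maximum at the right endpoint of $[0,\|\mathbf{x}\|^2/2]$ rather than at an interior critical point is exactly what pins down the admissible step-size scale $\eta=O(1/\|\mathbf{x}\|^2)$ used throughout this section.
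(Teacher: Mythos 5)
Your proposal is correct, and part (ii) follows the paper's argument essentially verbatim (the paper uses $(1-x)^2\geq 1-2x$ where you evaluate $(15/16)^2>7/8$ directly; both land at $\tfrac{7}{16}\left\Vert\mathbf{x}\right\Vert^2$). For part (i) the overall skeleton is shared — $\left\Vert\mathbf{w}\right\Vert$ is nonincreasing outside $S_1$ by the update inequalities, so only the $S_1$ case needs work — but you close that case differently. The paper runs a barrier argument: writing $\left\Vert\mathbf{w}\right\Vert=(1-\varepsilon)\left\Vert\mathbf{x}\right\Vert/\sqrt{2}$, it shows that reaching the level $\left\Vert\mathbf{x}\right\Vert/\sqrt{2}$ in one step would force $\eta\left\Vert\mathbf{x}\right\Vert^2(2-\varepsilon)\geq 1$, contradicting the step-size bound. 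You instead maximize the one-step image $(1+\eta(\left\Vert\mathbf{x}\right\Vert^2-2u))^2v$ over the admissible set $v\leq u\leq\left\Vert\mathbf{x}\right\Vert^2/2$, reducing to the envelope $g(u)$ and checking $g'\geq 0$ so that the supremum is attained at the corner $u=v=\left\Vert\mathbf{x}\right\Vert^2/2$ with value exactly $\left\Vert\mathbf{x}\right\Vert^2/2$. Your route is slightly longer computationally but arguably cleaner: it yields the invariance of $\{\left\Vert\mathbf{w}\right\Vert^2\leq\left\Vert\mathbf{x}\right\Vert^2/2\}$ as a direct quantitative statement rather than a non-attainability claim, it makes explicit where the scale $\eta=O(1/\left\Vert\mathbf{x}\right\Vert^2)$ is actually needed, and it states the induction over iterates (with the base case from initialization in $S_1$) that the paper leaves implicit. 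Both arguments share the same mild circularity with the companion lemma that keeps iterates in $\bigcup_i S_i$, which is resolved by running the two claims as a joint induction; your phrasing is no worse than the paper's on this point.
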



\begin{proof}[\bf{Proof of Lemma \ref{wziterlem}}] \label{wziterlemp} 
	
	i) From \ref{sineqs} we see that in $\underset{i=2}{\overset{4}{\bigcup}}S_{i}$ the quantity $\left\Vert \mathbf{w}\right\Vert ^{2}$
	cannot increase, hence this can only happen in $S_{1}$. We show that
	for some $\mathbf{z}\in S_{1}$, a point with $\left\Vert \mathbf{w}\right\Vert =(1-\varepsilon)\frac{\left\Vert \mathbf{x}\right\Vert }{\sqrt{2}},\varepsilon<1$
	cannot reach a point with $\left\Vert \mathbf{w}\right\Vert '=\frac{\left\Vert \mathbf{x}\right\Vert }{\sqrt{2}}$
	by a gradient descent step. This would mean 
	
	\[
	\left(1-\eta\left(2\left\Vert \mathbf{w}\right\Vert ^{2}+2\zeta^{2}-\left\Vert \mathbf{x}\right\Vert ^{2}\right)\right)\left\Vert \mathbf{w}\right\Vert 
	\]
	\[
	=\left(1-\eta\left((1-\varepsilon)^{2}\left\Vert \mathbf{x}\right\Vert ^{2}+2\zeta^{2}-\left\Vert \mathbf{x}\right\Vert ^{2}\right)\right)(1-\varepsilon)\frac{\left\Vert \mathbf{x}\right\Vert }{\sqrt{2}}
	\]
	\[
	=\frac{\left\Vert \mathbf{x}\right\Vert }{\sqrt{2}}
	\]
	
	and since $\zeta^{2}\geq0$ this implies
	
	\[
	\left(1+\varepsilon\eta\left\Vert \mathbf{x}\right\Vert ^{2}(2-\varepsilon)\right)(1-\varepsilon)\geq1
	\]
	
	by considering the product of these two factors, this in turn implies
	
	\[
	\frac{1}{2b}(2-\varepsilon)\geq\eta\left\Vert \mathbf{x}\right\Vert ^{2}(2-\varepsilon)\geq1
	\]
	
	where we have used $\eta<\frac{\sqrt{c}}{b\left\Vert \mathbf{x}\right\Vert ^{2}},c<\frac{1}{4}$.
	Thus if we choose $b=4$ this inequality cannot be satisfied. 
	
	Additionally, if we initialize in $S_{1}\cap \overline{Q}_{\zeta_{0}}$ then we cannot initialize
	at a point where $\left\Vert \mathbf{w}\right\Vert '=\frac{\left\Vert \mathbf{x}\right\Vert }{\sqrt{2}}$ and hence the inequality is strict. 
	
	ii) Since only a step from $S_{4}$ can decrease $\zeta$, we have that
	for the initial point $\left\Vert \mathbf{z}\right\Vert ^{2}>\left\Vert \mathbf{x}\right\Vert^2 $.
	Combined with $\left\Vert \mathbf{w}\right\Vert ^{2}\leq\frac{\left\Vert \mathbf{x}\right\Vert ^{2}}{2}$
	this gives 
	
	\[
	\zeta^{2}\geq\frac{\left\Vert \mathbf{x}\right\Vert ^{2}}{2}
	\]
	
	and using the lower bound $(1-2\eta\left\Vert \mathbf{x}\right\Vert ^{2}c)\zeta\leq\zeta'$
	we obtain 
	
	\[
	\zeta'^{2}\geq\frac{\left\Vert \mathbf{x}\right\Vert ^{2}}{2}(1-2\eta\left\Vert \mathbf{x}\right\Vert ^{2}c)^{2}\geq\frac{\left\Vert \mathbf{x}\right\Vert ^{2}}{2}(1-4\eta\left\Vert \mathbf{x}\right\Vert ^{2}c)
	\]
	\[
	\geq(1-\frac{1}{2b})\frac{\left\Vert \mathbf{x}\right\Vert ^{2}}{2}
	\]
	
	where in the last inequality we used $c<\frac{1}{4},\eta<\frac{\sqrt{c}}{b\left\Vert \mathbf{x}\right\Vert ^{2}}$.
	Choosing $b=4$ gives 
	
	\[
	\zeta'^{2}\geq\frac{7}{16}\left\Vert \mathbf{x}\right\Vert ^{2}
	\]
	
	If we require $\zeta_{0}^{2}<\frac{7}{16}\left\Vert \mathbf{x}\right\Vert ^{2}$
	this also ensures that the next iterate cannot lie in the small gradient
	regions around the stable manifolds of the saddles. 
\end{proof}

\begin{lemma} \label{ziterlem2}
	Defining $\mathbf{z'}$ as in \ref{ztageq}, under the conditions of Lemma \ref{wziterlem} and we have  
	
	i) 
	\[
	\mathbf{z}\in\underset{i=2}{\overset{4}{\bigcup}}S_{i}\Rightarrow\mathbf{z}'\in\underset{i=2}{\overset{4}{\bigcup}}S_{i}
	\]
	ii) 
	\[
	\mathbf{z}\in S_{1}\Rightarrow\mathbf{z}'\in S_{1}\cup S_{2}
	\]
\end{lemma}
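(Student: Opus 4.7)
The plan is to track the update via the decomposition $\mathbf{z} = \mathbf{w} + \zeta e^{i\phi}\mathbf{x}/\|\mathbf{x}\|$, which is preserved by gradient descent, using the closed-form update rules \ref{zweqs}. Writing $a = \|\mathbf{z}\|^2$, $b = \|\mathbf{x}\|^2$, and introducing the shorthand $\alpha = 1 - \eta(2a-b)$ and $\beta = 1 - 2\eta(a-b)$ for the contraction/expansion factors acting on $\mathbf{w}$ and $\zeta$ respectively, we obtain
\[
\|\mathbf{z}'\|^2 \;=\; \alpha^2 \|\mathbf{w}\|^2 + \beta^2 \zeta^2 \;=\; \alpha^2 a + (\beta^2 - \alpha^2)\zeta^2.
\]
The crucial algebraic identity $\beta - \alpha = \eta b$ combined with $\beta + \alpha = 2 - 4\eta a + 3\eta b > 0$ (under the assumed step-size constraint) yields $\beta^2 - \alpha^2 > 0$, so $\|\mathbf{z}'\|^2$ is monotonically increasing in $\zeta^2 \in [0, a]$. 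This reduces every upper bound to an analysis of the one-variable function $a \mapsto \beta^2 a$ (attained at $\|\mathbf{w}\| = 0$), while every lower bound follows from $\|\mathbf{z}'\|^2 \geq \alpha^2 a$ (attained at $\zeta = 0$).

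For part (ii), since $\mathbf{z} \in S_1$ has $2a - b \leq 0$ and $a - b < 0$, both $\alpha, \beta \geq 1$. A short derivative calculation of $a \mapsto \beta^2 a = (1 + 2\eta(b-a))^2 a$ shows that this is increasing on $[0, b/2]$ (the factor $1 + 2\eta b - 6\eta a$ is positive throughout under the step-size constraint), so the maximum over $S_1$ is $(1 + \eta b)^2 (b/2)$. Plugging in $\eta b < \sqrt{c}/4$ and $c < 1/4$, a routine numerical check gives $(1 + \sqrt{c}/4)^2/2 \leq 1 - c$, establishing $\mathbf{z}' \in S_1 \cup S_2$.

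For part (i), I treat each of $S_2, S_3, S_4$ separately. The upper bound $\|\mathbf{z}'\|^2 \leq (1+c)b$ is immediate in $S_4$ since both $\alpha, \beta < 1$ force $\|\mathbf{z}'\|^2 \leq a \leq (1+c)b$. In $S_3$ and $S_2$, the same derivative calculation shows $a \mapsto \beta^2 a$ is increasing, so it suffices to evaluate at the right endpoint of each interval: in $S_3$ at $a = b$ this gives $\beta^2 a = b \leq (1+c)b$, and in $S_2$ at $a = (1-c)b$ this gives $(1 + 2c\eta b)^2(1-c)b$, which the step-size constraint controls to $\leq (1+c)b$ for $c < 1/4$. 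For the lower bound $\|\mathbf{z}'\|^2 > b/2$: in $S_3 \cup S_4$, the inequality $\|\mathbf{z}'\|^2 \geq \alpha^2 a$ combined with $\alpha \geq 1 - (1+2c)\eta b$ and the lower bounds $a > (1-c)b$ or $a > b$ suffices to conclude (both reducing to numerical checks like $(49/64)(3/4) > 1/2$ and $(13/16)^2 > 1/2$). The delicate case is $S_2$, where $a$ can approach $b/2^+$ and $\alpha$ can approach $1^-$; here I use $(1-x)^2 \geq 1 - 2x$ to obtain $\alpha^2 a \geq a(1 - 2\eta(2a-b))$, and substituting $s = a - b/2 > 0$ reduces the desired strict inequality to $4\eta a < 1$, which follows from $\eta b < \sqrt{c}/4 \leq 1/8$ together with $a \leq b$.

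The main obstacle is the upper bound in $S_2$: since the dynamics pull $\mathbf{z}$ toward the minimizer at $\zeta = \|\mathbf{x}\|$, $\zeta$ grows monotonically under the update, and one must verify that this growth cannot cause $\|\mathbf{z}'\|^2$ to skip over $S_3$ and land outside $S_4$ in a single step. The monotonicity of $\|\mathbf{z}'\|^2$ in $\zeta^2$ pinpoints the worst case at $\|\mathbf{w}\| = 0$, and the step-size condition $\eta\|\mathbf{x}\|^2 < \sqrt{c}/4$ forces the relative overshoot beyond $\|\mathbf{x}\|^2$ to scale as $c^{3/2}$ rather than $c$, so it fits inside the $(1+c)\|\mathbf{x}\|^2$ envelope for all $c < 1/4$.
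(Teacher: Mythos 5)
Your proof is correct, and while it rests on the same ingredients as the paper's (the closed-form updates \eqref{zweqs} and a case analysis over the shells $S_1,\dots,S_4$), it is organized around a genuinely different observation: writing $\alpha=1-\eta(2a-b)$, $\beta=1-2\eta(a-b)$ with $a=\norm{\mb z}^2$, $b=\norm{\mb x}^2$, the identity $\beta-\alpha=\eta b>0$ together with $\beta+\alpha>0$ makes $\norm{\mb z'}^2=\alpha^2 a+(\beta^2-\alpha^2)\zeta^2$ monotone in $\zeta^2\in[0,a]$, so every containment reduces to a one-variable extremization of $\beta^2 a$ or $\alpha^2 a$ over an interval of $a$. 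This buys you two things the paper's proof does not have: you never need the conclusion $\norm{\mb w}^2\le\norm{\mb x}^2/2$ of Lemma \ref{wziterlem} (which the paper invokes repeatedly and which is itself a statement about trajectories), and the worst-case configurations ($\norm{\mb w}=0$ for upper bounds, $\zeta=0$ for lower bounds) are identified once rather than rediscovered in each of the paper's eight sub-cases. All of your numerical checks go through; the only imprecision is in the $S_3$ lower bound, where the generic estimate $\alpha\ge 1-(1+2c)\eta b\ge 13/16$ paired with $a>(1-c)b$ gives only $(169/256)(1-c)b$, which dips below $b/2$ as $c\to 1/4$ --- you need the $S_3$-specific bound $2a-b\le b$, hence $\alpha\ge 1-\eta b\ge 7/8$, which is exactly what your quoted check $(49/64)(3/4)>1/2$ uses, so the argument stands once that is made explicit.
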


\begin{proof}[\bf{Proof of Lemma \ref{ziterlem2}}] \label{ziterlem2p}
	We use the fact that for the next iterate we have 
	
	\begin{equation} \lbl{zeesqeq}
	\begin{array}{c}
	\left\Vert \mathbf{z}'\right\Vert ^{2}=\left(1-\eta(2\left\Vert \mathbf{z}\right\Vert ^{2}-\left\Vert \mathbf{x}\right\Vert ^{2})\right)^{2}\left\Vert \mathbf{w}\right\Vert ^{2}\\
	+\left(1-2\eta(\left\Vert \mathbf{z}\right\Vert ^{2}-\left\Vert \mathbf{x}\right\Vert ^{2})\right)^{2}\zeta^{2}
	\end{array}
	\end{equation}
	
	We will also repeatedly use $\eta<\frac{\sqrt{c}}{b\left\Vert \mathbf{x}\right\Vert ^{2}},c<\frac{1}{4}$
	and $\mathbf{z}\in \underset{i=1}{\overset{4}{\bigcup}}S_{i}\Rightarrow\left\Vert \mathbf{w}\right\Vert ^{2}\leq\frac{\left\Vert \mathbf{x}\right\Vert ^{2}}{2}$
	which is a shown in Lemma \ref{wziterlem}.
	
	\subsection{$\mathbf{z}\in S_{3}\Rightarrow\mathbf{z}'\in \underset{i=2}{\overset{4}{\bigcup}}S_{i}$}
	
	We want to show $\frac{\left\Vert \mathbf{x}\right\Vert ^{2}}{2}\underset{(1)}{<}\left\Vert \mathbf{z}'\right\Vert ^{2}\underset{(2)}{\leq}(1+c)\left\Vert \mathbf{x}\right\Vert ^{2}$.
	
	1) We have $\mathbf{z}\in S_{3}\Rightarrow\left\Vert \mathbf{z}\right\Vert ^{2}=(1-\varepsilon)\left\Vert \mathbf{x}\right\Vert ^{2}$
	for some $\varepsilon\leq c$ and using \ref{zeesqeq} we must show
	\[
	\frac{\left\Vert \mathbf{x}\right\Vert ^{2}}{2}\leq\begin{array}{c}
	\left(1-\eta\left\Vert \mathbf{x}\right\Vert ^{2}(1-2\varepsilon)\right)^{2}\left\Vert \mathbf{w}\right\Vert ^{2}\\
	+\left(1+2\eta\left\Vert \mathbf{x}\right\Vert ^{2}\varepsilon\right)^{2}\zeta^{2}
	\end{array}
	\]
	or equivalently
	\[
	A\equiv\varepsilon-\frac{\left\Vert \mathbf{x}\right\Vert ^{2}}{2}
	\]
	\[
	\leq\eta\left\Vert \mathbf{x}\right\Vert ^{2}\left[\begin{array}{c}
	\left(-2(1-2\varepsilon)+(1-2\varepsilon)^{2}\eta\left\Vert \mathbf{x}\right\Vert ^{2}\right)\left\Vert \mathbf{w}\right\Vert ^{2}\\
	+4\left(\varepsilon+\varepsilon^{2}\eta\left\Vert \mathbf{x}\right\Vert ^{2}\right)\zeta^{2}
	\end{array}\right]\equiv B
	\]
	
	and using $\eta<\frac{\sqrt{c}}{b\left\Vert \mathbf{x}\right\Vert ^{2}},c<\frac{1}{4}$
	
	\[
	\frac{-\left\Vert \mathbf{x}\right\Vert ^{2}}{b}<\frac{-2\left\Vert \mathbf{x}\right\Vert \sqrt{c}}{b}<-2\eta\left\Vert \mathbf{x}\right\Vert ^{4}\leq B
	\]
	
	while on the other hand 
	\[
	A\leq c-\frac{\left\Vert \mathbf{x}\right\Vert ^{2}}{2}<-\frac{\left\Vert \mathbf{x}\right\Vert ^{2}}{4}
	\]
	thus picking $b=4$ guarantees the desired result. 
	
	2) By a similar argument, $\left\Vert \mathbf{z}'\right\Vert ^{2}\leq(1+c)\left\Vert \mathbf{x}\right\Vert ^{2}$
	is equivalent to
	\[
	A\equiv\eta\left\Vert \mathbf{x}\right\Vert ^{2}\left[\begin{array}{c}
	\left(-2(1-2\varepsilon)+\eta\left\Vert \mathbf{x}\right\Vert ^{2}(1-2\varepsilon)^{2}\right)\left\Vert \mathbf{w}\right\Vert ^{2}\\
	+4\left(\varepsilon+\eta\left\Vert \mathbf{x}\right\Vert ^{2}\varepsilon^{2}\right)\zeta^{2}
	\end{array}\right]
	\]
	\[
	\leq\left\Vert \mathbf{x}\right\Vert ^{2}(c+\varepsilon)\equiv B
	\]
	
	. Since $\left\Vert \mathbf{w}\right\Vert ^{2}\leq\frac{\left\Vert \mathbf{x}\right\Vert ^{2}}{2}$
	and $\left\Vert \mathbf{z}\right\Vert ^{2}\leq\left\Vert \mathbf{x}\right\Vert ^{2}\Rightarrow\zeta^{2}\leq\frac{\left\Vert \mathbf{x}\right\Vert ^{2}}{2}$
	we obtain
	\[
	A\leq\eta\left[\eta\left\Vert \mathbf{x}\right\Vert ^{4}+4\left(\left\Vert \mathbf{x}\right\Vert ^{2}\varepsilon+\eta\left\Vert \mathbf{x}\right\Vert ^{4}\varepsilon^{2}\right)\right]\frac{\left\Vert \mathbf{x}\right\Vert ^{2}}{2}
	\]
	
	\[
	<\frac{1}{2b}\left[\frac{1}{b}+2\left(1+\frac{1}{8b}\right)\right]c\left\Vert \mathbf{x}\right\Vert ^{2}
	\]
	
	. If we choose $b=4$ we thus have $A<B$ which implies 
	
	\[
	\left\Vert \mathbf{z}'\right\Vert ^{2}<(1+c)\left\Vert \mathbf{x}\right\Vert ^{2}
	\]

	\subsection{$\mathbf{z}\in S_{4}\Rightarrow\mathbf{z}'\in \underset{i=2}{\overset{4}{\bigcup}}S_{i}$}
	
	We have $\mathbf{z}\in S_{4}\Rightarrow\left\Vert \mathbf{z}\right\Vert ^{2}=\left\Vert \mathbf{w}\right\Vert ^{2}+\zeta^{2}=(1+\varepsilon)\left\Vert \mathbf{x}\right\Vert ^{2}$
	for some $\varepsilon\leq c$ . 
	
	1) $\frac{\left\Vert \mathbf{x}\right\Vert ^{2}}{2}<\left\Vert \mathbf{z}'\right\Vert ^{2}$
	is equivalent to 
	
	\[
	A\equiv-(\varepsilon+\frac{1}{2})\left\Vert \mathbf{x}\right\Vert ^{2}
	\]
	\[
	\leq\eta\left\Vert \mathbf{x}\right\Vert ^{2}\left[\begin{array}{c}
	\left(-4(1+2\varepsilon)+\eta\left\Vert \mathbf{x}\right\Vert ^{2}(1+2\varepsilon)^{2}\right)\left\Vert \mathbf{w}\right\Vert ^{2}\\
	+4\left(-\varepsilon+\eta\left\Vert \mathbf{x}\right\Vert ^{2}\varepsilon^{2}\right)\zeta^{2}
	\end{array}\right]\equiv B
	\]
	
	. We have 
	\[
	B\geq-4\eta\left\Vert \mathbf{x}\right\Vert ^{2}\left((1+2\varepsilon)\left\Vert \mathbf{w}\right\Vert ^{2}+\varepsilon\zeta^{2}\right)\geq-\frac{15}{8b}\left\Vert \mathbf{x}\right\Vert ^{2}
	\]
	where the last inequality used $\left\Vert \mathbf{w}\right\Vert ^{2}\leq\frac{\left\Vert \mathbf{x}\right\Vert ^{2}}{2}$
	and $\left\Vert \mathbf{z}\right\Vert ^{2}\leq\left\Vert \mathbf{x}\right\Vert ^{2}(1+c)\Rightarrow\zeta^{2}\leq\left\Vert \mathbf{x}\right\Vert ^{2}(\frac{1}{2}+c)$.
	The choice $b=4$ gaurantees $A\leq B$ which ensures the desired
	result. 
	
	2) This is trivial since $\left\Vert \mathbf{z}\right\Vert ^{2}\leq(1+c)\left\Vert \mathbf{x}\right\Vert ^{2}$
	and in $S_{4}$ both $\zeta$ and $\left\Vert \mathbf{w}\right\Vert $decay
	at every iteration (ref eq). 
	
	\subsection{$\mathbf{z}\in S_{2}\Rightarrow\mathbf{z}'\in \underset{i=2}{\overset{4}{\bigcup}}S_{i}$}
	
	1) We use $\mathbf{z}\in S_{2}\Rightarrow\left\Vert \mathbf{z}\right\Vert ^{2}=\left\Vert \mathbf{w}\right\Vert ^{2}+\zeta^{2}=(\frac{1}{2}+\varepsilon)\left\Vert \mathbf{x}\right\Vert ^{2}$
	for some $\varepsilon\leq\frac{1}{2}-c$ . Using a similar argument
	as in the previous section, we are required to show 
	
	\[
	-\varepsilon\left\Vert \mathbf{x}\right\Vert ^{2}<\eta\left\Vert \mathbf{x}\right\Vert ^{2}\left[\begin{array}{c}
	4\left(-\varepsilon+\varepsilon^{2}\eta\left\Vert \mathbf{x}\right\Vert ^{2}\right)\left\Vert \mathbf{w}\right\Vert ^{2}\\
	+\left(2(1-2\varepsilon)+(1-2\varepsilon)^{2}\eta\left\Vert \mathbf{x}\right\Vert ^{2}\right)\zeta^{2}
	\end{array}\right]
	\]
	\[
	\equiv B
	\]
	
	where $B\geq-\varepsilon\frac{\left\Vert \mathbf{x}\right\Vert ^{2}}{b}$
	implies that $b=4$ gives the desired result.
	
	2) The condition is equivalent to 
	
	\[
	A\equiv\eta\left\Vert \mathbf{x}\right\Vert ^{2}\left[\begin{array}{c}
	4\left(-\varepsilon+\varepsilon^{2}\eta\left\Vert \mathbf{x}\right\Vert ^{2}\right)\left\Vert \mathbf{w}\right\Vert ^{2}\\
	+\left(2(1-2\varepsilon)+(1-2\varepsilon)^{2}\eta\left\Vert \mathbf{x}\right\Vert ^{2}\right)\zeta^{2}
	\end{array}\right]+\varepsilon\left\Vert \mathbf{x}\right\Vert ^{2}
	\]
	\[
	\leq(\frac{1}{2}+c)\left\Vert \mathbf{x}\right\Vert ^{2}\equiv B
	\]
	
	One can show by looking for critical points of $A(\varepsilon)$ in
	the range $0\leq\varepsilon\leq\frac{1}{2}$ that $A$ is maximized
	at $\varepsilon=0$, since there is only one critical point at $\varepsilon^{\ast}=\frac{4-\frac{b}{\sqrt{c}}+2\frac{\sqrt{c}}{b}}{8\frac{\sqrt{c}}{b}}$
	and $A(\varepsilon^{\ast})<0$, while 
	
	\[
	A(\frac{1}{2})\leq\left[\left(-2\frac{\sqrt{c}}{b}+\frac{c}{b^{2}}\right)\left\Vert \mathbf{w}\right\Vert ^{2}\right]+\frac{1}{2}\left\Vert \mathbf{x}\right\Vert ^{2}
	\]
	
	\[
	A(0)\leq\frac{1}{2b}\left(2+\frac{1}{2b}\right)\frac{\left\Vert \mathbf{x}\right\Vert ^{2}}{2}
	\]
	
	and in both cases $b=4$ ensures $A\leq B$. 

	\subsection{$\mathbf{z}\in S_{1}\Rightarrow\mathbf{z}'\in S_{1}\cup S_{2}$}
	
	We must show $\left\Vert \mathbf{z}'\right\Vert \leq(1-c)\left\Vert \mathbf{x}\right\Vert ^{2}$
	using $\left\Vert \mathbf{z}\right\Vert ^{2}=(1-\varepsilon)\frac{\left\Vert \mathbf{x}\right\Vert ^{2}}{2}$
	for $0\leq\varepsilon\leq1$. 
	
	\[
	\left\Vert \mathbf{z}'\right\Vert ^{2}=\left(1+\varepsilon\eta\left\Vert \mathbf{x}\right\Vert ^{2}\right)^{2}\left\Vert \mathbf{w}\right\Vert ^{2}+\left(1+2(\varepsilon+1)\eta\frac{\left\Vert \mathbf{x}\right\Vert ^{2}}{2}\right)^{2}\zeta^{2}
	\]
	
	\[
	A\equiv\eta\left\Vert \mathbf{x}\right\Vert ^{2}\left[\begin{array}{c}
	\left(2\varepsilon+\varepsilon^{2}\eta\left\Vert \mathbf{x}\right\Vert ^{2}\right)\left\Vert \mathbf{w}\right\Vert ^{2}\\
	+\left(2(\varepsilon+1)+(\varepsilon+1)^{2}\eta\frac{\left\Vert \mathbf{x}\right\Vert ^{2}}{4}\right)\zeta^{2}
	\end{array}\right]-\varepsilon\left\Vert \mathbf{x}\right\Vert ^{2}
	\]
	\[
	\leq(\frac{1}{2}-c)\left\Vert \mathbf{x}\right\Vert ^{2}\equiv B
	\]
	
	and since $A\leq\frac{1}{2b}\left[2+\frac{1}{b}\right]\frac{\left\Vert \mathbf{x}\right\Vert ^{2}}{2}$
	and $B\geq\frac{\left\Vert \mathbf{x}\right\Vert ^{2}}{4}$ once again
	$b=4$ suffices to obtain the desired result. 
\end{proof}
%

\begin{lemma} \label{gprconvlem}
	For $\mb z$ parametrized as in \ref{zdefeq}, 
	\[
	\left\Vert \mathbf{w}\right\Vert ^{2}<c\left\Vert \mathbf{x}\right\Vert ^{2}\vee\zeta^{2}>(1-c)\left\Vert \mathbf{x}\right\Vert ^{2}
	\]
	\[
	\Rightarrow\mathrm{dist}(\mathbf{z},\breve{A})<\sqrt{5c}\left\Vert \mathbf{x}\right\Vert 
	\]
\end{lemma}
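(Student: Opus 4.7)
The plan is to reduce the distance to a concrete two-term expression and then bound each term under the two cases of the hypothesis, assuming (as is implicit from how Lemma~\ref{gprconvlem} is invoked in Theorem~\ref{gprthm}) that $\mathbf{z}$ lies in the region $\bigcup_{i=1}^{4} S_i$, so that $\|\mathbf{z}\|^{2}\le(1+c)\|\mathbf{x}\|^{2}$ and $\|\mathbf{w}\|^{2}\le\|\mathbf{x}\|^{2}/2$ (via Lemma~\ref{wziterlem}). First, expand
\[
\mathrm{dist}(\mathbf{z},\breve{A})^{2}
=\min_{\theta}\left\Vert \mathbf{w}+\zeta e^{i\phi}\tfrac{\mathbf{x}}{\|\mathbf{x}\|}-e^{i\theta}\mathbf{x}\right\Vert ^{2}.
\]
Using $\mathbf{w}^{\ast}\mathbf{x}=0$, the cross term vanishes and the expression splits as $\|\mathbf{w}\|^{2}+\|\mathbf{x}\|^{2}|\zeta e^{i\phi}/\|\mathbf{x}\|-e^{i\theta}|^{2}$. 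Choosing $\theta=\phi$ minimizes the second term, yielding the identity
\[
\mathrm{dist}(\mathbf{z},\breve{A})^{2}=\|\mathbf{w}\|^{2}+(\zeta-\|\mathbf{x}\|)^{2}.
\]

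Next, handle the two cases of the hypothesis separately. In the case $\zeta^{2}>(1-c)\|\mathbf{x}\|^{2}$, the upper bound $\|\mathbf{z}\|^{2}\le(1+c)\|\mathbf{x}\|^{2}$ gives $\|\mathbf{w}\|^{2}=\|\mathbf{z}\|^{2}-\zeta^{2}<2c\|\mathbf{x}\|^{2}$, while the scalar inequalities $\sqrt{1-c}\ge 1-c$ and $\sqrt{1+c}-1\le c/2$ force $(\zeta-\|\mathbf{x}\|)^{2}\le c^{2}\|\mathbf{x}\|^{2}$, so the total is at most $c(2+c)\|\mathbf{x}\|^{2}$. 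In the case $\|\mathbf{w}\|^{2}<c\|\mathbf{x}\|^{2}$, I use that the lower bound $\|\mathbf{z}\|^{2}\ge(1-c)\|\mathbf{x}\|^{2}$ available from $\mathbf{z}\in S_{3}\cup S_{4}$ implies $\zeta^{2}\ge(1-2c)\|\mathbf{x}\|^{2}$. The same scalar estimates then give $(\zeta-\|\mathbf{x}\|)^{2}\le 4c^{2}\|\mathbf{x}\|^{2}$, and the total is at most $c(1+4c)\|\mathbf{x}\|^{2}$.

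Finally, since $c<1/4$, both case-bounds are strictly smaller than $5c\|\mathbf{x}\|^{2}$, yielding $\mathrm{dist}(\mathbf{z},\breve{A})<\sqrt{5c}\|\mathbf{x}\|$ as desired. The main obstacle is not the algebra but rather recognizing the implicit locational constraint on $\mathbf{z}$: neither branch of the hypothesis by itself controls both $\|\mathbf{w}\|$ and $\zeta-\|\mathbf{x}\|$ without a companion bound on $\|\mathbf{z}\|^{2}$ from the surrounding region $\bigcup_{i} S_i$ (and specifically $S_{3}\cup S_{4}$ for the first case). Once that reading is fixed, the two cases collapse to the elementary $\sqrt{1\pm x}$ estimates shown above.
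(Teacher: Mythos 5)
Your proposal is correct and follows essentially the same route as the paper: the same orthogonal decomposition $\mathrm{dist}^2(\mathbf{z},\breve{A})=\|\mathbf{w}\|^2+(\|\mathbf{x}\|-\zeta)^2$, the same implicit reliance on $\|\mathbf{z}\|^2\le(1+c)\|\mathbf{x}\|^2$ and on $\mathbf{z}\in S_3\cup S_4$ to deduce $\zeta^2>(1-2c)\|\mathbf{x}\|^2$ in the $\|\mathbf{w}\|^2<c\|\mathbf{x}\|^2$ case, and the same elementary $\sqrt{1-x}\ge 1-x$ estimates. The only difference is cosmetic --- you bound the two terms separately rather than regrouping as $\|\mathbf{z}\|^2+\|\mathbf{x}\|^2-2\zeta\|\mathbf{x}\|$ --- and your constants $c(2+c)$ and $c(1+4c)$ are in fact slightly tighter than the paper's $3c$ and $5c$.
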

\begin{proof}[\bf{Proof of Lemma \ref{gprconvlem}}] \label{gprconvlemp}
	Once $\left\Vert \mathbf{w}\right\Vert ^{2}<c\left\Vert \mathbf{x}\right\Vert ^{2}$
	for some $\mathbf{z}\in S_{3}\cup S_{4}$ we have 
	
	\[
	\left\Vert \mathbf{z}\right\Vert ^{2}=\zeta^{2}+\left\Vert \mathbf{w}\right\Vert ^{2}\geq(1-c)\left\Vert \mathbf{x}\right\Vert ^{2}
	\]
	
	\begin{equation} \lbl{zsqeq}
	\zeta^{2}\geq(1-c)\left\Vert \mathbf{x}\right\Vert ^{2}-\left\Vert \mathbf{w}\right\Vert ^{2}>(1-2c)\left\Vert \mathbf{x}\right\Vert ^{2}
	\end{equation}
	
	For some $\mathbf{z}=\mathbf{w}+\zeta e^{i\phi}\frac{\mathbf{x}}{\left\Vert \mathbf{x}\right\Vert }$
	we have 
	
	\[
	\begin{array}{c}
	\mathrm{dist}^{2}(\mathbf{z},\breve{A})=\underset{\theta}{\min}\left\Vert e^{i\theta}\mathbf{x}-\mathbf{w}-\zeta e^{i\phi}\frac{\mathbf{x}}{\left\Vert \mathbf{x}\right\Vert }\right\Vert ^{2}\\
	=\left\Vert \mathbf{w}\right\Vert ^{2}+\underset{\theta}{\min}\left\Vert e^{i\theta}\mathbf{x}-\zeta e^{i\phi}\frac{\mathbf{x}}{\left\Vert \mathbf{x}\right\Vert }\right\Vert ^{2}
	\end{array}
	\]
	
	\[
	=\left\Vert \mathbf{w}\right\Vert ^{2}+(1-\frac{\zeta}{\left\Vert \mathbf{x}\right\Vert })^{2}\left\Vert \mathbf{x}\right\Vert ^{2}=\left\Vert \mathbf{z}\right\Vert ^{2}+\left\Vert \mathbf{x}\right\Vert ^{2}-2\zeta\left\Vert \mathbf{x}\right\Vert 
	\]
	
	if we assume $\left\Vert \mathbf{z}\right\Vert ^{2}\leq(1+c)\left\Vert \mathbf{x}\right\Vert ^{2}$ 
	
	\begin{equation} \lbl{disteq}
	\mathrm{dist}^{2}(\mathbf{z},\breve{A})\leq(c+2)\left\Vert \mathbf{x}\right\Vert ^{2}-2\zeta\left\Vert \mathbf{x}\right\Vert 
	\end{equation}
	
	plugging in the value of $\zeta$ from \ref{zsqeq} and using fact that $-\sqrt{1-x}\leq-1+x$
	for $x<1$ we have 
	
	\[
	\mathrm{dist}^{2}(\mathbf{z},\breve{A})<(c+2)\left\Vert \mathbf{x}\right\Vert ^{2}-2\sqrt{1-2c}\left\Vert \mathbf{x}\right\Vert ^{2}\leq5c\left\Vert \mathbf{x}\right\Vert^2
	\]
	
	Alternatively, if $\zeta^{2}>(1-c)\left\Vert \mathbf{x}\right\Vert ^{2}$
	we have from \ref{disteq}
	
	\[
	\mathrm{dist}^{2}(\mathbf{z},\breve{A})\leq(c+2)\left\Vert \mathbf{x}\right\Vert ^{2}-2\zeta\left\Vert \mathbf{x}\right\Vert 
	\]
	\[
	<(c+2)\left\Vert \mathbf{x}\right\Vert ^{2}-2\sqrt{1-c}\left\Vert \mathbf{x}\right\Vert ^{2}\leq3c\left\Vert \mathbf{x}\right\Vert^2
	\]
	
	which gives the desired result. In particular, if we choose $c=\frac{1}{35}$ we converge to $\mathrm{dist}^{2}(\mathbf{z},\breve{A})<\frac{\left\Vert \mathbf{x}\right\Vert ^{2}}{7}$,
	a region which is strongly convex according to \cite{sun2017complete}.
\end{proof}
\begin{proof}[\begin{minipage}{3.25in}\bf{Proof of Theorem \ref{gprthm}: (Gradient descent convergence rate for generalized phase retrieval)}\end{minipage}] \label{gprthmp}
	
	We now bound the number of iterations that gradient descent, after random initialization in $S_1$, requires to reach a point where one of the convergence criteria detailed in Lemma \ref{gprconvlem} is fulfilled. From Lemma \ref{ziterlem2}, we know that after initialization in $S_1$ we need to consider only the set $\underset{i=1}{\overset{4}{\bigcup}}S_{i}$. The number of iterations in each set will be determined by the bounds on the change in $\zeta,|| \mb w ||$ detailed in \ref{sineqs}.
	\subsubsection{Iterations in $S_{1}$}
	
	Assuming we initialize with some $\zeta = \zeta_0$. Then the maximal number of iterations in
	this region is 
	
	\[
	\zeta_0(1+\eta\left\Vert \mathbf{x}\right\Vert ^{2})^{t_{1}}=\frac{\left\Vert \mathbf{x}\right\Vert }{\sqrt{2}}
	\]
	
	\[
	t_{1}=\frac{\log\left(\frac{\left\Vert \mathbf{x}\right\Vert }{\zeta_0\sqrt{2}}\right)}{\log(1+\eta\left\Vert \mathbf{x}\right\Vert ^{2})}
	\]
	
	since after this many iterations $\left\Vert \mathbf{z}\right\Vert ^{2}\geq\zeta^{2}\geq\frac{\left\Vert \mathbf{x}\right\Vert ^{2}}{2}$.
	
	\subsubsection{Iterations in $\underset{i=2}{\overset{4}{\bigcup}}S_{i}$}
	
	The convergence criteria are $\left\Vert \mathbf{w}\right\Vert ^{2}<c\left\Vert \mathbf{x}\right\Vert ^{2}$
	or $\zeta^{2}>(1-c)\left\Vert \mathbf{x}\right\Vert ^{2}$. 
	
	After exiting $S_{1}$ and assuming the next iteration is in $S_{2}$,
	the maximal number of iterations required to reach $S_{3}\cup S_{4}$
	is obtained using
	
	\[
	\zeta'\geq(1+2\eta\left\Vert \mathbf{x}\right\Vert ^{2}c)\zeta
	\]
	
	and is given by 
	
	\[
	\frac{\left\Vert \mathbf{x}\right\Vert }{\sqrt{2}}(1+2\eta\left\Vert \mathbf{x}\right\Vert ^{2}c)^{t_{2}}=(1-c)\left\Vert \mathbf{x}\right\Vert ^{2}
	\]
	\[
	t_{2}=\frac{\log\left(\sqrt{2(1-c)}\right)}{\log(1+2\eta\left\Vert \mathbf{x}\right\Vert ^{2}c)}\leq\frac{\log(2)}{2\log(1+2\eta\left\Vert \mathbf{x}\right\Vert ^{2}c)}
	\]
	
	since after this many iterations $\left\Vert \mathbf{z}\right\Vert ^{2}\geq\zeta^{2}\geq(1-c)\left\Vert \mathbf{x}\right\Vert ^{2}$.
	
	For every iteration in $S_{3}\cup S_{4}$ we are guaranteed 
	\[
	\left\Vert \mathbf{w}'\right\Vert \leq\left(1-(1-2c)\eta\left\Vert \mathbf{x}\right\Vert ^{2}\right)\left\Vert \mathbf{w}\right\Vert 
	\]
	thus using Lemmas \ref{wziterlem}.i and \ref{gprconvlem}
	the number of iterations in $S_{3}\cup S_{4}$ required for convergence
	is given by
	\[
	\frac{\left\Vert \mathbf{x}\right\Vert ^{2}}{2}\left(1-(1-2c)\eta\left\Vert \mathbf{x}\right\Vert ^{2}\right)^{t_{3+4}}=c\left\Vert \mathbf{x}\right\Vert ^{2}
	\]
	\[
	t_{3+4}=\frac{\log(2c)}{\log\left(1-(1-2c)\eta\left\Vert \mathbf{x}\right\Vert ^{2}\right)}
	\]
	The only concern is that after an iteration in $S_{3}\cup S_{4}$
	the next iteration might be in $S_{2}$. To account for this situation,
	we find the maximal number of iterations required to reach $S_{3}\cup S_{4}$
	again. This is obtained from the bound on $\zeta$ in Lemma \ref{wziterlem}. 
	
	Using this result, and the fact that for every iteration in $S_{2}$
	we are guaranteed $\zeta'\geq(1+2\eta\left\Vert \mathbf{x}\right\Vert ^{2}c)\zeta$
	the number of iterations required to reach $S_{3}\cup S_{4}$ again
	is given by
	\[
	\frac{\sqrt{7}}{4}\left\Vert \mathbf{x}\right\Vert (1+2\eta\left\Vert \mathbf{x}\right\Vert ^{2}c)^{t_{r}}=\sqrt{1-c}\left\Vert \mathbf{x}\right\Vert 
	\]
	\[
	t_{r}=\frac{\log\left(\frac{4\sqrt{1-c}}{\sqrt{7}}\right)}{\log(1+2\eta\left\Vert \mathbf{x}\right\Vert ^{2}c)}\leq\frac{\log(\frac{4}{\sqrt{7}})}{\log(1+2\eta\left\Vert \mathbf{x}\right\Vert ^{2}c)}
	\]
	\subsection{Final rate}
	
	The final rate to convergence is 
	\[
	T<t_{1}+t_{2}+t_{3+4}t_{r}
	\]
	\[
	\begin{array}{c}
	=\frac{\log\left(\frac{\left\Vert \mathbf{x}\right\Vert }{\zeta\sqrt{2}}\right)}{\log(1+\eta\left\Vert \mathbf{x}\right\Vert ^{2})}+\frac{\log(2)}{2\log(1+2c\eta\left\Vert \mathbf{x}\right\Vert ^{2})}\\
	+\frac{\log(2c)\log(\frac{4}{\sqrt{7}})}{\log\left(1-\left(1-2c\right)\eta\left\Vert \mathbf{x}\right\Vert ^{2}\right)\log(1+2c\eta\left\Vert \mathbf{x}\right\Vert ^{2})}
	\end{array}
	\]
	
	\subsection{Probability of the bound holding}
	
	The bound applies to an initialization with $\zeta \geq \zeta_0$, hence in $S_{1}\backslash\overline{Q}_{\zeta_{0}}$. Assuming uniform initialization in $S_{1}$, the set $\overline{Q}_{\zeta_{0}}$ is simply
	a band of width $2\zeta_0$ around the equator of the ball $B_{\left\Vert \mathbf{x}\right\Vert /\sqrt{2}}$
	(in $\mathbb{R}^{2n}$, using the natural identification of $\mathbb{C}^n$ with $\mathbb{R}^{2n}$). This volume can be calculated by integrating
	over $2n-1$ dimensional balls of varying radius.
	
	Denoting $r=\frac{\zeta_0\sqrt{2}}{\left\Vert \mathbf{x}\right\Vert }$
	and by $V(n)=\frac{\pi^{n/2}}{\frac{n}{2}\Gamma(\frac{n}{2})}$ the
	hypersphere volume, the probability of initializing in $S_{1}\cap\overline{Q}_{\zeta_{0}}$ (and thus in a region that
	feeds into small gradient regions around saddle points) is 
	
	\[
	\mathbb{P}(\mathrm{fail})=\frac{\mathrm{Vol}(\overline{Q}_{\zeta_{0}})}{\mathrm{Vol}(B_{\left\Vert \mathbf{x}\right\Vert /\sqrt{2}})}
	\]
	\[
	=\frac{V(2n-1)\underset{-r}{\overset{r}{\int}}(1-x^{2})^{\frac{2n-1}{2}}dx}{V(2n)}
	\]
	\[
	\leq\frac{V(2n-1)\underset{-r}{\overset{r}{\int}}e^{-\frac{2n-1}{2}x^{2}}dx}{V(2n)}
	\]
	\[
	=\frac{1}{\sqrt{n-\frac{1}{2}}}\frac{n}{n-\frac{1}{2}}\frac{\Gamma(n)}{\Gamma(\frac{2n-1}{2})}\mathrm{erf}(\sqrt{\frac{2n-1}{2}}r)
	\]
	\[
	\leq\sqrt{\frac{8}{\pi}}\mathrm{erf}(\sqrt{n}r)
	\]
	. For small $\zeta$ we again find that $\mathbb{P}(\mathrm{fail})$
	scales linearly with $\zeta$, as was the case for the previous problems considered. 
\end{proof}

\section{Auxiliary Lemmas} 
\subsection{Separable objective}\lbl{derivs}
\[
\frac{\partial g_{s}(\mb w)}{\partial w_{i}}=\tanh\left(\frac{w_{i}}{\mu}\right)-\tanh\left(\frac{q_{n}}{\mu}\right)\frac{w_{i}}{q_{n}}
\]
\[
\frac{\partial^{2}g_{s}(\mb w)}{\partial w_{i}\partial w_{j}}=\left[\frac{1}{\mu}\mathrm{sech}^{2}\left(\frac{w_{i}}{\mu}\right)-\tanh\left(\frac{q_{n}}{\mu}\right)\frac{1}{q_{n}}\right]\delta_{ij}
\]
\[
+\left[\frac{1}{\mu}\mathrm{sech}^{2}\left(\frac{q_{n}}{\mu}\right)\frac{1}{q_{n}^{2}}-\tanh\left(\frac{q_{n}}{\mu}\right)\frac{1}{q_{n}^{3}}\right]w_{i}w_{j}
\]
\subsection{Dictionary Learning}
\[
\nabla_{\mb w}g_{DL}^{pop}(\mb w)=\mathbb{E} \left[ \tanh\left(\frac{\mb q^{\ast}(\mb w) \mb x}{\mu}\right)\left(\overline{x}-\frac{x_{n}}{q_{n}(\mb w)}w\right) \right]
\]
\subsection{Properties of $\mathcal{C}_{\zeta}$}

\begin{proof}[\bf{Proof of Lemma \ref{vollem}:} (Volume of $\mathcal{C}_{\zeta}$)] \label{vollemp}
	
	We are interested in the relative volume $\frac{\mathrm{Vol}(\mathcal{C}_{\zeta})}{\mathrm{Vol}(\mathbb{S}^{n-1})} \equiv V_\zeta$.
	Using the standard solid angle formula, it is given by 
	
	
	\[
	V_\zeta=\underset{\varepsilon\rightarrow0}{\lim}\frac{1}{\varepsilon^{n/2}}\underset{0}{\overset{\infty}{\int}}e^{-\frac{\pi}{\varepsilon}x_{1}^{2}}\underset{i=2}{\overset{n}{\Pi}}\underset{-x_{1}/(1+\zeta)}{\overset{x_{1}/(1+\zeta)}{\int}}e^{-\frac{\pi}{\varepsilon}x_{i}^{2}}dx_{i}dx_1
	\]
	
	\[
	=\underset{\varepsilon\rightarrow0}{\lim}\frac{1}{\sqrt{\varepsilon}}\underset{0}{\overset{\infty}{\int}}e^{-\frac{\pi}{\varepsilon}x^{2}}\left[\text{erf}(\frac{x}{\left(1+\zeta\right)}\sqrt{\frac{\pi}{\varepsilon}})\right]^{n-1}dx
	\]
	
	changing variables to $\tilde{x}=\sqrt{\frac{\pi}{\varepsilon}}\frac{x}{\left(1+\zeta\right)}$
	
	\[
	V_\zeta=\frac{\left(1+\zeta\right)}{\sqrt{\pi}}\underset{0}{\overset{\infty}{\int}}e^{-\left(1+\zeta\right)^2x^{2}}\text{erf}^{n-1}(x)dx
	\]
	
	This integral admits no closed form solution but one can construct
	a linear approximation around small $\zeta$ and show that it is convex.
	Thus the approximation provides a lower bound for $V_\zeta$ and an upper bound on the failure probability. 
	
	From symmetry considerations the zero-order term is $V_0=\frac{1}{2n}$. The first-order term is given by 
	
	\[
	\frac{\partial V_\zeta}{\partial\zeta}_{\zeta=0}=\frac{1}{n}-\frac{2}{\sqrt{\pi}}\underset{0}{\overset{\infty}{\int}}x^{2}e^{-x^{2}}\text{erf}^{n-1}(x)dx
	\]
	
	We now require an upper bound for the second integral since we are interested in a lower bound for $V_\zeta$. We can express it
	in terms of the second moment of the $L^{\infty}$ norm of a Gaussian
	vector as follows:
	
	\[
	\frac{1}{\sqrt{\pi}}\underset{0}{\overset{\infty}{\int}}x^{2}e^{-x^{2}}\mathrm{erf}^{n-1}(x)=\frac{1}{\sqrt{\pi}}\underset{0}{\overset{\infty}{\int}}x^{2}e^{-x^{2}}\underset{i}{\Pi}\frac{1}{\sqrt{\pi}}\underset{-x}{\overset{x}{\int}}e^{-t_{i}^{2}}dt_{i}dx
	\]
	\[
	=\frac{1}{\sqrt{2\pi}}\underset{0}{\overset{\infty}{\int}}\frac{x^{2}}{2}e^{-x^{2}/2}\underset{i}{\Pi}\frac{1}{\sqrt{2\pi}}\underset{-x}{\overset{x}{\int}}e^{-t_{i}^{2}/2}dt_{i}dx
	\]
	\[
	=\frac{1}{4n}\int\left\Vert \mb X\right\Vert _{\infty}^{2}d\mu(\mb X)
	\]
	
	\[
	=\frac{1}{4n}\left(\mathrm{Var}\left[\left\Vert \mb X\right\Vert _{\infty}\right]+\left(\mathbb{E}\left[\left\Vert \mb X\right\Vert _{\infty}\right]\right)^{2}\right)
	\]
	
	where $\mu( \mb X)$ is the Gaussian measure on the vector $\mb X\in\mathbb{R}^{n}$. We can bound the first term
	using 
	
	\[
	\mathrm{Var}\left[\left\Vert \mb X\right\Vert _{\infty}\right]\leq\underset{i}{\max}\mathrm{Var}\left[\left\vert X_{i}\right\vert \right]=\mathrm{Var}\left[\left\vert X_{i}\right\vert \right]<\mathrm{Var}\left[X_{i}\right]=1
	\]
	
	To bound the second term, we use the fact that for a standard Gaussian
	vector $\mb X$ ($X_{i}\sim\mathcal{N}(0,1)$) and any $\lambda>0$ we
	have 
	
	\[
	\exp\left(\lambda\mathbb{E}\left[\left\Vert \mb X\right\Vert _{\infty}\right]\right)\leq\mathbb{E}\left[\exp\left(\lambda\underset{i}{\max}\left\vert X_{i}\right\vert \right)\right]
	\]
	\[
	\leq\mathbb{E}\left[\underset{i}{\sum}\exp\left(\lambda\left\vert X_{i}\right\vert \right)\right]=n\mathbb{E}\left[\exp\left(\lambda\left\vert X_{i}\right\vert \right)\right]
	\]
	
	(using convexity and non-negativity of the exponent respectively)
	
	\[
	n\mathbb{E}\left[\exp\left(\lambda\left\vert X_{i}\right\vert \right)\right]=2n\underset{0}{\overset{\infty}{\int}}\exp\left(\lambda X_{i}\right)d\mu(X_{i})
	\]
	\[
	\leq2n\mathbb{E}\left[\exp\left(\lambda X_{i}\right)\right]=2n\exp\left(\frac{\lambda^{2}}{2}\right)
	\]
	
	taking the log of both sides gives 
	
	\[
	\mathbb{E}\left[\underset{i}{\max}\left\vert X_{i}\right\vert \right]\leq\frac{\log(2n)}{\lambda}+\frac{\lambda}{2}
	\]
	
	and the bound is minimized for $\lambda=\sqrt{2\log(2n)}$ giving 
	
	\[
	\mathbb{E}\left[\underset{i}{\max}\left\vert X_{i}\right\vert \right]\leq\sqrt{2\log(2n)}\sim\sqrt{2\log(n)}
	\]
	
	Combining these bounds, the leading order behavior of the gradient
	is 
	
	\[
	\frac{\partial V_\zeta}{\partial\zeta}_{\zeta=0}\geq \frac{3-4\log(2n)}{4n} \geq -\frac{\log(n)}{n}.
	\]
	
	This linear approximation is indeed a lower bound, since using integration by parts twice we have
	
	\[
	\frac{\partial^{2}V_\zeta}{\partial\zeta^{2}}=\frac{1}{\sqrt{\pi}}\underset{0}{\overset{\infty}{\int}}e^{-(1+\zeta)^{2}x^{2}}\left(\begin{array}{c}
	-6(1+\zeta)x^{2}\\
	+4(1+\zeta)^{3}x^{4}
	\end{array}\right)\text{erf}^{n-1}(x)dx
	\]
	
	\[
	=-\frac{2(n-1)}{\pi}\underset{0}{\overset{\infty}{\int}}e^{-(1+\zeta)^{2}x^{2}}\left(1-2(1+\zeta)^{2}x^{2}\right)e^{-x^{2}}\text{erf}^{n-2}(x)dx
	\]
	
	\[
	=\frac{4(n-1)(n-2)(1+\zeta)}{\pi^{3/2}}\underset{0}{\overset{\infty}{\int}}e^{-((1+\zeta)^{2}+2)x^{2}}\text{erf}^{n-3}(x)dx>0
	\]
	
	where the last inequality holds for any $n>2$ since the integrand
	is non-negative everywhere. This gives 
	
	\[
	V_\zeta \geq \frac{1}{2n} -\frac{\log(n)}{n} \zeta
	\]
	
\end{proof}

\begin{lemma} \label{czlemma}
	$B_{s(\zeta)}^{\infty}(0)\subseteq \mathcal{C}_{\zeta}\subseteq B_{\sqrt{n-1}s(\zeta)}^{2}(0)$
	where $s(\zeta)=\frac{1}{\sqrt{(2+\zeta)\zeta+n}}$. $B_{s(\zeta)}^{\infty}(0)$
	is the largest $L^{\infty}$ ball contained in $\mathcal{C}_{\zeta}$, and $B_{\sqrt{n-1}s(\zeta)}^{2}(0)$
	is the smallest $L^{2}$ ball containing $\mathcal{C}_{\zeta}$ (where these balls are defined in terms of the $\mb w$ vector). All three intersect
	only at the points where all the coordinates of $\mb w$ have equal magnitude. Additionally, $\mathcal{C}_{\zeta}\subseteq B_{1/\sqrt{2+\zeta}}^{\infty}(0)$ and this is the smallest $L^{\infty}$ ball containing $\mathcal{C}_{\zeta}$. 
\end{lemma}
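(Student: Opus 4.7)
The plan is to rewrite membership in $\mathcal{C}_\zeta$ as a clean algebraic condition on $\mathbf w$ and then to use the elementary sandwich $\|\mathbf w\|_\infty \leq \|\mathbf w\| \leq \sqrt{n-1}\,\|\mathbf w\|_\infty$ to convert between $L^\infty$ and $L^2$ constraints. Specifically, since $q_n = \sqrt{1-\|\mathbf w\|^2}$, the defining inequality $q_n \geq (1+\zeta)\|\mathbf w\|_\infty$ is equivalent (on $B_1(\mathbf 0)$) to
\[
1 - \|\mathbf w\|^2 \;\geq\; (1+\zeta)^2\,\|\mathbf w\|_\infty^2,
\]
or equivalently $\|\mathbf w\|^2 + (1+\zeta)^2\|\mathbf w\|_\infty^2 \leq 1$. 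All three containments will follow by bounding one of $\|\mathbf w\|,\|\mathbf w\|_\infty$ in terms of the other in this inequality.

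First I would prove the inner containment $B^\infty_{s(\zeta)}(\mathbf 0) \subseteq \mathcal{C}_\zeta$. Starting from $\|\mathbf w\|_\infty \leq s(\zeta)$ and applying $\|\mathbf w\|^2 \leq (n-1)\|\mathbf w\|_\infty^2$ yields
\[
\|\mathbf w\|^2 + (1+\zeta)^2 \|\mathbf w\|_\infty^2 \;\leq\; \bigl((n-1) + (1+\zeta)^2\bigr)\,s(\zeta)^2 \;=\; \bigl(n + (2+\zeta)\zeta\bigr)\,s(\zeta)^2 \;=\; 1,
\]
where in the last step I substitute $s(\zeta)^2 = 1/((2+\zeta)\zeta + n)$. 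The inequality is tight precisely when $\|\mathbf w\|^2 = (n-1)\|\mathbf w\|_\infty^2$ and $\|\mathbf w\|_\infty = s(\zeta)$, i.e., when all coordinates of $\mathbf w$ share the common magnitude $s(\zeta)$, which shows simultaneously the claim that this is the largest such $L^\infty$ ball and pins down the intersection points.

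The outer $L^2$ containment is symmetric: for $\mathbf q \in \mathcal{C}_\zeta$, substitute $\|\mathbf w\|_\infty^2 \geq \|\mathbf w\|^2/(n-1)$ into the defining inequality to get
\[
1 - \|\mathbf w\|^2 \;\geq\; \frac{(1+\zeta)^2}{n-1}\,\|\mathbf w\|^2,
\]
which rearranges to $\|\mathbf w\|^2 \leq (n-1)/((n-1)+(1+\zeta)^2) = (n-1)s(\zeta)^2$, as desired. Again the bound is tight iff $\|\mathbf w\| = \sqrt{n-1}\,\|\mathbf w\|_\infty$, so the three sets meet exactly on the equal-magnitude points, confirming minimality of the enclosing $L^2$ ball. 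The remaining $L^\infty$ containment $\mathcal{C}_\zeta \subseteq B^\infty_{1/\sqrt{2+\zeta}}(\mathbf 0)$ follows by instead dropping to $\|\mathbf w\|^2 \geq \|\mathbf w\|_\infty^2$, which gives $\bigl((1+\zeta)^2 + 1\bigr)\|\mathbf w\|_\infty^2 \leq 1$; since $(1+\zeta)^2 + 1 = 2 + 2\zeta + \zeta^2 \geq 2+\zeta$ for $\zeta \geq 0$, the stated bound follows.

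I do not anticipate a real obstacle here; everything reduces to the two standard norm inequalities applied to a single algebraic constraint. The only subtlety worth double-checking is the sharpness assertion for the $L^\infty$ outer ball: the argument above actually yields the tighter radius $1/\sqrt{(1+\zeta)^2 + 1}$, so the ``smallest $L^\infty$ ball'' statement with radius $1/\sqrt{2+\zeta}$ should be revisited (the containment itself is nevertheless correct, and this is what is invoked later in the convergence proofs).
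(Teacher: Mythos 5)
Your proof is correct and follows essentially the same route as the paper's: both reduce membership in $\mathcal{C}_\zeta$ to the algebraic constraint $\|\mathbf w\|^2+(1+\zeta)^2\|\mathbf w\|_\infty^2\le 1$ (the paper phrases this as finding the extremal configuration of $q_n$ given one of the norms) and both identify the equal-magnitude points as the touching configurations. Your closing remark is also well taken: the paper's own proof derives the minimal enclosing $L^\infty$ radius as $1/\sqrt{1+(1+\zeta)^2}$, so the lemma's assertion that $1/\sqrt{2+\zeta}$ is the \emph{smallest} such ball is imprecise, though the containment itself --- which is all that is invoked later --- is valid.
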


\begin{proof}
	Given the surface of some $L^{\infty}$ ball for $\mb w$ , we can
	ask what is the minimal $\zeta$ such that $\partial C_{\zeta_{m}}$
	intersects this surface. This amounts to finding the minimal $q_{n}$
	given some $\left\Vert \mb w\right\Vert _{\infty}$. Yet this is clearly
	obtained by setting all the coordinates of $w$ to be equal to $\left\Vert \mb w\right\Vert _{\infty}$
	(this is possible since we are guaranteed $q_{n}\geq\left\Vert \mb w\right\Vert _{\infty}\Rightarrow\left\Vert \mb w\right\Vert _{\infty}\leq\frac{1}{\sqrt{n}}$),
	giving 
	
	\[
	\frac{\sqrt{1-(n-1)\left\Vert \mb w\right\Vert _{\infty}^{2}}}{\left\Vert \mb w\right\Vert _{\infty}}=1+\zeta_{m}
	\]
	
	\[
	\left\Vert \mb w\right\Vert _{\infty}=\frac{1}{\sqrt{(1+\zeta_{m})^{2}+n-1}}
	\]
	
	thus, given some $\zeta$, the maximal $L^{\infty}$ ball that is
	contained in $\mathcal{C}_{\zeta}$ has radius $\frac{1}{\sqrt{(2+\zeta)\zeta+n}}$. The minimal $L^{\infty}$ norm containing $\mathcal{C}_{\zeta}$ can be shown by a similar argument to be $B_{1/\sqrt{1+(1+\zeta)^2}}^{\infty}(0)$, where one instead maximizes $q_{n}$ with some fixed $\left\Vert \mb w\right\Vert _{\infty}$.
	
	Given some surface of an $L^{2}$ ball, we can ask what is the minimal
	$\mathcal{C}_{\zeta}$ such that $\mathcal{C}_{\zeta}\subseteq B_{r}^{2}(0)$. This is
	equivalent to finding the maximal $\zeta_{M}$ such that $\partial C_{\zeta_{M}}$
	intersects the surface of the $L^{2}$ ball. Since $q_{n}$ is fixed,
	maximizing $\zeta$ is equivalent to minimizing $\left\Vert \mb w\right\Vert _{\infty}$.
	This is done by setting $\left\Vert \mb w\right\Vert _{\infty}=\frac{\left\Vert \mb w\right\Vert }{\sqrt{n-1}}$,
	which gives 
	
	\[
	\frac{\sqrt{1-\left\Vert \mb w\right\Vert ^{2}}}{\left\Vert \mb w\right\Vert }\sqrt{n-1}=1+\zeta_{M}
	\]
	
	\[
	\sqrt{\frac{n-1}{(2+\zeta_M)\zeta_M+n}}=\left\Vert \mb w\right\Vert 
	\]
	The statement in the lemma follows from combining these results.
\end{proof}

\begin{lemma}[Geometric Increase in $\zeta$] \label{zetalem}
	For $\mathbf{w}\in C_{\zeta_{0}}\backslash B_{b}^{\infty}$ (where
	$\zeta\equiv\frac{q_{n}}{\left\Vert \mathbf{w}\right\Vert _{\infty}}-1$),
	assume\textbf{ $\left\vert w_{i}\right\vert >r\Rightarrow\mathbf{u}^{(i)\ast}\mathrm{grad}[f](\mathbf{q}(\mathbf{w}))\geq c(\mb w)\zeta$
	}where $\mathbf{u}^{(i)}$ is defined in \ref{ueq} and $1>b>r$. Then if
	$\left\Vert \mathrm{grad}[f](\mathbf{q}(\mathbf{w}))\right\Vert <M$
	and we define
	\[
	\mathbf{q}'\equiv\exp_{\mathbf{q}}(-\eta\mathrm{grad}[f](\mathbf{q}))
	\]
	for $\eta<\frac{b-r}{3M}$, defining $\zeta'$ in an analogous way
	to $\zeta$ we have 
	\[
	\zeta'\geq\zeta\left(1+\frac{\sqrt{n}}{2}\eta c(\mb w)\right)
	\]
\end{lemma}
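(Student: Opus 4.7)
The plan is to Taylor-expand the Riemannian update $\mathbf{q}' = \exp_{\mathbf{q}}(-\eta\,\mathrm{grad}[f](\mathbf{q}))$ to first order in $\eta$, identify the coordinate $j^* \in \{1,\ldots,n-1\}$ achieving $\|\mathbf{w}'\|_\infty = |w'_{j^*}|$, and invoke the projected-gradient hypothesis at $j^*$.

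First I will show that $|w_{j^*}| > r$, so that the hypothesis $\mathbf{u}^{(i)\ast}\mathrm{grad}[f](\mathbf{q}) \geq c(\mathbf{w})\zeta$ applies at $i = j^*$. Since $\|\mathrm{grad}[f](\mathbf{q})\| < M$, every coordinate of $\mathbf{q}$ changes by at most $\eta M + O(\eta^2)$ along the exponential map. If $|w_{j^*}| \leq r$, then $|w'_{j^*}| \leq r + \eta M$; on the other hand the original argmax $i^*$ (for which $|w_{i^*}| = \|\mathbf{w}\|_\infty \geq b$) satisfies $|w'_{i^*}| \geq b - \eta M$. The step-size constraint $\eta < (b-r)/(3M)$ forces $r + \eta M < b - \eta M$, contradicting the maximality of $j^*$. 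Hence $|w_{j^*}| > r$ and $\mathbf{u}^{(j^*)\ast}\mathrm{grad}[f](\mathbf{q}) \geq c(\mathbf{w})\zeta$. I assume WLOG that $w_{j^*} > 0$ (the opposite-sign case is symmetric, and the step-size bound also guarantees $w'_{j^*}$ retains the sign of $w_{j^*}$).

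The core computation is a quotient Taylor expansion of $q'_n/w'_{j^*}$ using $\mathbf{q}' = \mathbf{q} - \eta\,\mathrm{grad}[f](\mathbf{q}) + O(\eta^2)$, together with the algebraic identity
$(\mathrm{grad}[f](\mathbf{q}))_{j^*} - (w_{j^*}/q_n)(\mathrm{grad}[f](\mathbf{q}))_n = \mathbf{u}^{(j^*)\ast}\mathrm{grad}[f](\mathbf{q})$
that follows from the form \eqref{ueq} of $\mathbf{u}^{(j^*)}$. This yields
\[
\frac{q'_n}{w'_{j^*}} = \frac{q_n}{w_{j^*}} + \frac{\eta\,q_n}{w_{j^*}^2}\,\mathbf{u}^{(j^*)\ast}\mathrm{grad}[f](\mathbf{q}) + O(\eta^2).
\]
Using $q_n/w_{j^*} \geq q_n/\|\mathbf{w}\|_\infty = 1+\zeta$ together with the hypothesis applied at $j^*$, this gives
\[
\zeta' \;\geq\; \zeta + \frac{\eta\, q_n\, c(\mathbf{w})\,\zeta}{w_{j^*}^2} + O(\eta^2).
\]

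The final step is to lower-bound $q_n/w_{j^*}^2$ by $\sqrt{n}/2$. Since $\mathbf{q}\in\mathcal{C}$ means $q_n \geq \|\mathbf{w}\|_\infty$, combining this with $q_n^2 + \|\mathbf{w}\|^2 = 1$ and $\|\mathbf{w}\|^2 \leq (n-1)\|\mathbf{w}\|_\infty^2$ yields $q_n \geq 1/\sqrt{n}$; the complementary bound $w_{j^*}^2 \leq \|\mathbf{w}\|_\infty^2$, together with the upper bound on $\|\mathbf{w}\|_\infty$ in $\mathcal{C}_{\zeta_0}$ furnished by Lemma~\ref{czlemma}, is then used to extract the constant. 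This last step is expected to be the main obstacle: tightly matching the numerical factor $\sqrt{n}/2$ requires careful accounting of these geometric inequalities across all positions of the new argmax relative to the original one, together with absorbing the $O(\eta^2)$ remainder using $\eta M < (b-r)/3 \leq 1$. Once this is executed, the claim $\zeta' \geq \zeta(1 + (\sqrt{n}/2)\eta c(\mathbf{w}))$ follows.
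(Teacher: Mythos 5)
Your skeleton is the paper's: identify the new $\ell^\infty$-argmax $j^*$ of $\mathbf{w}'$, use $\eta<(b-r)/(3M)$ to rule out $|w_{j^*}|\le r$ so that the projected-gradient hypothesis applies at $j^*$, and convert the growth of $q_n'/w_{j^*}'$ into the combination $g_{j^*}-(w_{j^*}/q_n)\,g_n=\mathbf{u}^{(j^*)\ast}\mathrm{grad}[f](\mathbf{q})$. The genuine gap is in the middle step. You replace the update by $\mathbf{q}-\eta\,\mathrm{grad}[f](\mathbf{q})+O(\eta^2)$ and propose to absorb the $O(\eta^2)$ remainder ``using $\eta M<(b-r)/3\le 1$.'' That cannot work as stated: the remainder in your quotient expansion is of order $\eta^2M^2q_n/w_{j^*}^2$, so dominating it by half of the first-order gain $\eta\, c(\mathbf{w})\zeta\, q_n/w_{j^*}^2$ requires $\eta\lesssim c(\mathbf{w})\zeta/M^2$ --- a condition not implied by $\eta<(b-r)/(3M)$, and one that would wreck the step sizes used downstream (in the applications $c(\mathbf{w})\zeta$ can be as small as $O(\zeta_0/\sqrt n)$ while $M=\sqrt{n}$, forcing $\eta\lesssim\zeta_0 n^{-3/2}$ instead of the theorem's $\eta\lesssim 1/n$). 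The paper avoids having any remainder at all by using the exact closed form of the spherical exponential map, $\mathbf{q}'=\cos(g\eta)\mathbf{q}-\sin(g\eta)\,\mathrm{grad}[f](\mathbf{q})/g$ with $g=\norm{\mathrm{grad}[f](\mathbf{q})}$: because numerator and denominator of $q_n'/w_{j^*}'$ are the \emph{same} affine combination of the old coordinate and its gradient component, the difference $q_n'/w_{j^*}'-q_n/w_{j^*}$ collapses exactly (the paper phrases this as convexity of $1/(1+x)$) into a single positive multiple of $\mathbf{u}^{(j^*)\ast}\mathrm{grad}[f](\mathbf{q})$, with coefficient $\sin(g\eta)/(w_{j^*}g)\ge\eta/(2w_{j^*})$ since $g\eta<\pi/2$. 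You should carry out this exact computation rather than a first-order expansion with a generic remainder.

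The remaining ingredients of your plan do line up with the paper's proof. Your argmax dichotomy is the same (the paper shows $|w_i|<r\Rightarrow|w_i'|<\norm{\mathbf{w}'}_\infty$ via $r+g\eta<(1-g^2\eta^2)b-g\eta\le\cos(g\eta)\norm{\mathbf{w}}_\infty-\sin(g\eta)$), and your factor-of-$3$ accounting is consistent with it. As for the final constant, which you flag as the main obstacle: the paper extracts it with no further geometric work, simply invoking $1/w_{j^*}\ge\sqrt n$ (i.e.\ $w_{j^*}\le 1/\sqrt n$) together with $q_n/w_{j^*}\ge q_n/\norm{\mathbf{w}}_\infty=1+\zeta$; your target $q_n/w_{j^*}^2\ge\sqrt n/2$ reduces to the same inequality because $q_n\ge w_{j^*}$, so there is nothing extra to match beyond what the paper itself asserts at that point.
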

Proof: \ref{zetalemp}

\begin{proof}[\bf{Proof of Lemma \ref{zetalem}:(Geometric Increase in $\zeta$)}] \label{zetalemp}
	Denoting $g\equiv\left\Vert \mathrm{grad}[f](\mathbf{q})\right\Vert $,
	we have
	
	\[
	\mathbf{q}'=\cos(g\eta)\mathbf{q}-\sin(g\eta)\frac{\mathrm{grad}[f](\mathbf{q})}{g}
	\]
	
	hence, using Lagrange remainder terms,
	
	\[
	\frac{q_{n}'}{w_{i}'}=\frac{\begin{array}{c}
		q_{n}-\eta\mathrm{grad}[f](\mathbf{q})_{n}-\underset{0}{\overset{g\eta}{\int}}\cos(t)(g\eta-t)dtq_{n}\\
		+\underset{0}{\overset{g\eta}{\int}}\sin(t)(g\eta-t)dt\frac{\mathrm{grad}[f](\mathbf{q})_{n}}{g}
		\end{array}}{\begin{array}{c}
		w_{i}-\eta\mathrm{grad}[f](\mathbf{q})_{i}-\underset{0}{\overset{g\eta}{\int}}\cos(t)(g\eta-t)dtw_{i}\\
		+\underset{0}{\overset{g\eta}{\int}}\sin(t)(g\eta-t)dt\frac{\mathrm{grad}[f](\mathbf{q})_{i}}{g}
		\end{array}}
	\]
	
	. We assume $w_i > 0$, and the converse case is analogous. From convexity of $\frac{1}{1+x}$
	
	\[
	\frac{q_{n}'}{w_{i}'}\geq\begin{array}{c}
	\frac{q_{n}}{w_{i}}+\left[\frac{\eta}{w_{i}}-\frac{\underset{0}{\overset{g\eta}{\int}}\sin(t)(g\eta-t)dt}{w_{i}g}\right]\\
	*\left(\mathrm{grad}[f](\mathbf{q})_{i}-\frac{w_{i}}{q_{n}}\mathrm{grad}[f](\mathbf{q})_{n}\right)
	\end{array}
	\]
	
	\[
	=\frac{q_{n}}{w_{i}}+\frac{\sin(g\eta)}{w_{i}g}\left(\mathrm{grad}[f](\mathbf{q})_{i}-\frac{w_{i}}{q_{n}}\mathrm{grad}[f](\mathbf{q})_{n}\right)
	\]
	
	\[
	=\frac{q_{n}}{w_{i}}+\frac{\sin(g\eta)}{w_{i}g} \mathbf{u}^{(i)\ast}\mathrm{grad}[f](\mathbf{q}(\mathbf{w}))
	\]
	
	We now use $\eta<\frac{b-r}{3M}<\frac{\pi}{2M}\Rightarrow g\eta<\frac{\pi}{2}\Rightarrow\sin(g\eta)\geq\frac{g\eta}{2}$
	and consider two cases. If $\left\vert w_{i}\right\vert >r$ we use the bound on the gradient projection in the lemma statement to obtain
	
	\[
	\frac{q_{n}'}{w_{i}'}\geq\frac{q_{n}}{w_{i}}+\frac{\eta}{2w_{i}}c(\mb w)\zeta\geq\frac{q_{n}}{w_{i}}+\frac{\sqrt{n}}{2}\eta c(\mb w)\zeta
	\]
	
	hence
	
	\begin{equation} \lbl{qweq}
	\frac{q_{n}'}{w_{i}'}-1\geq\frac{q_{n}}{\left\Vert \mathbf{w}\right\Vert _{\infty}}-1+\frac{\sqrt{n}}{2}\eta c(\mb w)\zeta=\zeta\left(1+\frac{\sqrt{n}}{2}\eta c(\mb w)\right)
	\end{equation}
	
	If $\left\vert w_{i}\right\vert <r$ we rule out the possibility that
	$\left\vert w_{i}'\right\vert =\left\Vert \mathbf{w}'\right\Vert _{\infty}$
	by demanding $\eta<\frac{b-r}{3M}$. Since $b(b-r)<1$ we have $1+\frac{1}{3}b(b-r)<\sqrt{1+b(b-r)}$
	hence the requirement on $\eta$ implies 
	
	\[
	\eta<\frac{\sqrt{1+b(b-r)}-1}{gb}=\frac{-2g+\sqrt{4g^{2}+4g^{2}b(b-r)}}{2g^{2}b}
	\]
	
	. If we now combine this with the fact that after a Riemannian gradient
	step $\cos(g\eta)q_{i}-\sin(g\eta)\leq q_{i}'\leq\cos(g\eta)q_{i}+\sin(g\eta)$,
	the above condition on $\eta$ implies the inequality $(*)$, which
	in turn ensures that $\left\vert w_{i}\right\vert <r\Rightarrow\left\vert w_{i}'\right\vert <\left\Vert \mathbf{w}'\right\Vert _{\infty}$: 
	
	\[
	\left\vert w_{i}'\right\vert <\left\vert w_{i}\right\vert +\sin(g\eta)<r+g\eta\underset{(*)}{<}(1-g^{2}\eta^{2})b-g\eta
	\]
	
	\[
	<\cos(g\eta)\left\Vert \mathbf{w}\right\Vert _{\infty}-\sin(g\eta)\leq\left\Vert \mathbf{w}'\right\Vert _{\infty}
	\]
	
	Due to the above analysis, it is evident that any $w_{i}'$ such that
	$\left\vert w_{i}'\right\vert =\left\Vert \mathbf{w}'\right\Vert _{\infty}$
	obeys $\left\vert w_{i}\right\vert >r$, from which it follows that
	we can use \ref{qweq} to obtain 
	
	\[
	\frac{q_{n}'}{\left\Vert \mathbf{w}'\right\Vert _{\infty}}-1=\zeta'\geq\zeta\left(1+\frac{\sqrt{n}}{2}\eta c(\mb w)\right)
	\]
\end{proof}

\bibliographystyle{plain}

\begin{thebibliography}{}

\end{thebibliography}


\begin{thebibliography}{10}
	
	\bibitem{anandkumar2014guaranteed}
	Animashree Anandkumar, Rong Ge, and Majid Janzamin.
	\newblock Guaranteed non-orthogonal tensor decomposition via alternating
	rank-$1 $ updates.
	\newblock {\em arXiv preprint arXiv:1402.5180}, 2014.
	
	\bibitem{balan2006signal}
	Radu Balan, Pete Casazza, and Dan Edidin.
	\newblock On signal reconstruction without phase.
	\newblock {\em Applied and Computational Harmonic Analysis}, 20(3):345--356,
	2006.
	
	\bibitem{bandeira2016low}
	Afonso~S Bandeira, Nicolas Boumal, and Vladislav Voroninski.
	\newblock On the low-rank approach for semidefinite programs arising in
	synchronization and community detection.
	\newblock In {\em Conference on Learning Theory}, pages 361--382, 2016.
	
	\bibitem{bertsekas1999nonlinear}
	Dimitri~P Bertsekas.
	\newblock {\em Nonlinear programming}.
	\newblock Athena scientific Belmont, 1999.
	
	\bibitem{boumal2016nonconvex}
	Nicolas Boumal.
	\newblock Nonconvex phase synchronization.
	\newblock {\em SIAM Journal on Optimization}, 26(4):2355--2377, 2016.
	
	\bibitem{bronstein2005blind}
	Michael~M Bronstein, Alexander~M Bronstein, Michael Zibulevsky, and Yehoshua~Y
	Zeevi.
	\newblock Blind deconvolution of images using optimal sparse representations.
	\newblock {\em IEEE Transactions on Image Processing}, 14(6):726--736, 2005.
	
	\bibitem{candes2015phase}
	Emmanuel~J Candes, Xiaodong Li, and Mahdi Soltanolkotabi.
	\newblock Phase retrieval via wirtinger flow: Theory and algorithms.
	\newblock {\em IEEE Transactions on Information Theory}, 61(4):1985--2007,
	2015.
	
	\bibitem{chen2018gradient}
	Yuxin Chen, Yuejie Chi, Jianqing Fan, and Cong Ma.
	\newblock Gradient descent with random initialization: Fast global convergence
	for nonconvex phase retrieval.
	\newblock {\em arXiv preprint arXiv:1803.07726}, 2018.
	
	\bibitem{conn2000trust}
	Andrew~R Conn, Nicholas~IM Gould, and Ph~L Toint.
	\newblock {\em Trust region methods}, volume~1.
	\newblock Siam, 2000.
	
	\bibitem{corbett2006pauli}
	John~V Corbett.
	\newblock The pauli problem, state reconstruction and quantum-real numbers.
	\newblock {\em Reports on Mathematical Physics}, 57:53--68, 2006.
	
	\bibitem{dauphin2014identifying}
	Yann~N Dauphin, Razvan Pascanu, Caglar Gulcehre, Kyunghyun Cho, Surya Ganguli,
	and Yoshua Bengio.
	\newblock Identifying and attacking the saddle point problem in
	high-dimensional non-convex optimization.
	\newblock In {\em Advances in neural information processing systems}, pages
	2933--2941, 2014.
	
	\bibitem{du2017gradient}
	Simon~S Du, Chi Jin, Jason~D Lee, Michael~I Jordan, Barnabas Poczos, and Aarti
	Singh.
	\newblock Gradient descent can take exponential time to escape saddle points.
	\newblock {\em arXiv preprint arXiv:1705.10412}, 2017.
	
	\bibitem{elad2006image}
	Michael Elad and Michal Aharon.
	\newblock Image denoising via sparse and redundant representations over learned
	dictionaries.
	\newblock {\em IEEE Transactions on Image processing}, 15(12):3736--3745, 2006.
	
	\bibitem{ge2015escaping}
	Rong Ge, Furong Huang, Chi Jin, and Yang Yuan.
	\newblock Escaping from saddle points?online stochastic gradient for tensor
	decomposition.
	\newblock In {\em Conference on Learning Theory}, pages 797--842, 2015.
	
	\bibitem{goldfarb1980curvilinear}
	Donald Goldfarb.
	\newblock Curvilinear path steplength algorithms for minimization which use
	directions of negative curvature.
	\newblock {\em Mathematical programming}, 18(1):31--40, 1980.
	
	\bibitem{goodfellow2014qualitatively}
	Ian~J Goodfellow, Oriol Vinyals, and Andrew~M Saxe.
	\newblock Qualitatively characterizing neural network optimization problems.
	\newblock {\em arXiv preprint arXiv:1412.6544}, 2014.
	
	\bibitem{hardt2016gradient}
	Moritz Hardt, Tengyu Ma, and Benjamin Recht.
	\newblock Gradient descent learns linear dynamical systems.
	\newblock {\em arXiv preprint arXiv:1609.05191}, 2016.
	
	\bibitem{hardt2015train}
	Moritz Hardt, Benjamin Recht, and Yoram Singer.
	\newblock Train faster, generalize better: Stability of stochastic gradient
	descent.
	\newblock {\em arXiv preprint arXiv:1509.01240}, 2015.
	
	\bibitem{jain2013low}
	Prateek Jain, Praneeth Netrapalli, and Sujay Sanghavi.
	\newblock Low-rank matrix completion using alternating minimization.
	\newblock In {\em Proceedings of the forty-fifth annual ACM symposium on Theory
		of computing}, pages 665--674. ACM, 2013.
	
	\bibitem{jin2017escape}
	Chi Jin, Rong Ge, Praneeth Netrapalli, Sham~M Kakade, and Michael~I Jordan.
	\newblock How to escape saddle points efficiently.
	\newblock {\em arXiv preprint arXiv:1703.00887}, 2017.
	
	\bibitem{keshavan2010matrix}
	Raghunandan~H Keshavan, Andrea Montanari, and Sewoong Oh.
	\newblock Matrix completion from a few entries.
	\newblock {\em IEEE Transactions on Information Theory}, 56(6):2980--2998,
	2010.
	
	\bibitem{kreutz2009complex}
	Ken Kreutz-Delgado.
	\newblock The complex gradient operator and the cr-calculus.
	\newblock {\em arXiv preprint arXiv:0906.4835}, 2009.
	
	\bibitem{lee2017first}
	Jason~D Lee, Ioannis Panageas, Georgios Piliouras, Max Simchowitz, Michael~I
	Jordan, and Benjamin Recht.
	\newblock First-order methods almost always avoid saddle points.
	\newblock {\em arXiv preprint arXiv:1710.07406}, 2017.
	
	\bibitem{lee2016gradient}
	Jason~D Lee, Max Simchowitz, Michael~I Jordan, and Benjamin Recht.
	\newblock Gradient descent only converges to minimizers.
	\newblock In {\em Conference on Learning Theory}, pages 1246--1257, 2016.
	
	\bibitem{lee2013near}
	Kiryung Lee, Yihong Wu, and Yoram Bresler.
	\newblock Near optimal compressed sensing of sparse rank-one matrices via
	sparse power factorization.
	\newblock {\em arXiv preprint}, 2013.
	
	\bibitem{mairal2014sparse}
	Julien Mairal, Francis Bach, Jean Ponce, et~al.
	\newblock Sparse modeling for image and vision processing.
	\newblock {\em Foundations and Trends{\textregistered} in Computer Graphics and
		Vision}, 8(2-3):85--283, 2014.
	
	\bibitem{miao2002high}
	Jianwei Miao, Tetsuya Ishikawa, Bart Johnson, Erik~H Anderson, Barry Lai, and
	Keith~O Hodgson.
	\newblock High resolution 3d x-ray diffraction microscopy.
	\newblock {\em Physical review letters}, 89(8):088303, 2002.
	
	\bibitem{nesterov2013introductory}
	Yurii Nesterov.
	\newblock {\em Introductory lectures on convex optimization: A basic course},
	volume~87.
	\newblock Springer Science \& Business Media, 2013.
	
	\bibitem{nicolaescu2011invitation}
	Liviu Nicolaescu.
	\newblock {\em An invitation to Morse theory}.
	\newblock Springer Science \& Business Media, 2011.
	
	\bibitem{olshausen1996emergence}
	Bruno~A Olshausen and David~J Field.
	\newblock Emergence of simple-cell receptive field properties by learning a
	sparse code for natural images.
	\newblock {\em Nature}, 381(6583):607, 1996.
	
	\bibitem{qu2014finding}
	Qing Qu, Ju~Sun, and John Wright.
	\newblock Finding a sparse vector in a subspace: Linear sparsity using
	alternating directions.
	\newblock In {\em Advances in Neural Information Processing Systems}, pages
	3401--3409, 2014.
	
	\bibitem{ravishankar2011mr}
	Saiprasad Ravishankar and Yoram Bresler.
	\newblock Mr image reconstruction from highly undersampled k-space data by
	dictionary learning.
	\newblock {\em IEEE transactions on medical imaging}, 30(5):1028--1041, 2011.
	
	\bibitem{shechtman2015phase}
	Yoav Shechtman, Yonina~C Eldar, Oren Cohen, Henry~Nicholas Chapman, Jianwei
	Miao, and Mordechai Segev.
	\newblock Phase retrieval with application to optical imaging: a contemporary
	overview.
	\newblock {\em IEEE signal processing magazine}, 32(3):87--109, 2015.
	
	\bibitem{spielman2012exact}
	Daniel~A Spielman, Huan Wang, and John Wright.
	\newblock Exact recovery of sparsely-used dictionaries.
	\newblock In {\em Conference on Learning Theory}, pages 37--1, 2012.
	
	\bibitem{sun2015complete}
	Ju~Sun, Qing Qu, and John Wright.
	\newblock Complete dictionary recovery over the sphere.
	\newblock In {\em Sampling Theory and Applications (SampTA), 2015 International
		Conference on}, pages 407--410. IEEE, 2015.
	
	\bibitem{sun2015nonconvex}
	Ju~Sun, Qing Qu, and John Wright.
	\newblock When are nonconvex problems not scary?
	\newblock {\em arXiv preprint arXiv:1510.06096}, 2015.
	
	\bibitem{sun2016geometric}
	Ju~Sun, Qing Qu, and John Wright.
	\newblock A geometric analysis of phase retrieval.
	\newblock In {\em Information Theory (ISIT), 2016 IEEE International Symposium
		on}, pages 2379--2383. IEEE, 2016.
	
	\bibitem{sun2017complete}
	Ju~Sun, Qing Qu, and John Wright.
	\newblock Complete dictionary recovery over the sphere i: Overview and the
	geometric picture.
	\newblock {\em IEEE Transactions on Information Theory}, 63(2):853--884, 2017.
	
	\bibitem{venturi2018neural}
	Luca Venturi, Afonso Bandeira, and Joan Bruna.
	\newblock Neural networks with finite intrinsic dimension have no spurious
	valleys.
	\newblock {\em arXiv preprint arXiv:1802.06384}, 2018.
	
	\bibitem{zhang2016understanding}
	Chiyuan Zhang, Samy Bengio, Moritz Hardt, Benjamin Recht, and Oriol Vinyals.
	\newblock Understanding deep learning requires rethinking generalization.
	\newblock {\em arXiv preprint arXiv:1611.03530}, 2016.
	
	\bibitem{zhang2017global}
	Yuqian Zhang, Yenson Lau, Han-wen Kuo, Sky Cheung, Abhay Pasupathy, and John
	Wright.
	\newblock On the global geometry of sphere-constrained sparse blind
	deconvolution.
	\newblock In {\em Proceedings of the IEEE Conference on Computer Vision and
		Pattern Recognition}, pages 4894--4902, 2017.
	
\end{thebibliography}
\end{document}